\definecolor{Blue}{rgb}{0,0,1}
\definecolor{Red}{rgb}{1,0,0}
\definecolor{Magenta}{rgb}{1,0,1}
\newcommand{\spatialVar}{x}
\newcommand{\spatialVarArg}[1]{x_{#1}}
\newcommand{\RR}[1]{\ensuremath{\mathbb{R}^{ #1 }}}
\newcommand{\natno}[1]{\ensuremath{\mathbb{N}^{ #1 }}}
\newcommand{\natnoZero}[1]{\ensuremath{\mathbb{N}_0^{ #1 }}}
\newcommand{\defeq}{\coloneqq}
\newcommand{\iteration}[2]{#1^{(#2)}}
\newcommand{\relaxation}{\omega}
\newcommand{\relaxationTruth}{\bar\omega}
\newcommand{\nsubsystems}{n}
\newcommand{\samplespace}{\Omega}
\newcommand{\eventset}{\mathcal F}
\newcommand{\probabilityOp}{P}
\newcommand{\probabilitySpace}{(\samplespace,\eventset,\probabilityOp)}
\newcommand{\domain}{\Omega_x}
\newcommand{\subdomainArg}[1]{\Omega_{x}^{#1}}
\newcommand{\neighborSet}{\mathcal N}
\newcommand{\bds}[1]{\boldsymbol{#1}}
\newcommand{\outputssymb}{x}
\newcommand{\outputssymbCap}{X}
\newcommand{\generalRVsymb}{{{w}}}
\newcommand{\generalRV}{{{\mathsf \generalRVsymb}}}
\newcommand{\generalRVsymbVec}{{\boldsymbol{ \generalRVsymb}}}
\newcommand{\generalRVVec}{{\boldsymbol{\mathsf \generalRVsymb}}}
\newcommand{\outputsRV}{{\boldsymbol{\mathsf \outputssymb}}}
\newcommand{\outputsRVArg}[1]{{\outputsRV}_{#1}}
\newcommand{\outputsRVi}{\outputsRVArg{i}}
\newcommand{\noutputsRV}{n_\outputsRV}
\newcommand{\noutputsRVArg}[1]{n_{\outputsRV,#1}}
\newcommand{\noutputsRVi}{\noutputsRVArg{i}}
\newcommand{\outputsRVDummy}{{\outputsRV}}
\newcommand{\outputsRVDummyArg}[1]{\outputsRV_{#1}}
\newcommand{\outputsRVDummyElement}[1]{\outputssymb_{#1}}
\newcommand{\outputsRVTrue}{\outputsRV_\star}
\newcommand{\outputsRVTmp}{\hat\outputsRV}
\newcommand{\outputsTrue}{\outputs_\star}
\newcommand{\AAres}{\boldsymbol {\mathsf f}}
\newcommand{\memory}{m}
\newcommand{\outputs}{{\boldsymbol{ \outputssymb}}}
\newcommand{\outputsArg}[1]{{\outputs}_{#1}}
\newcommand{\outputsi}{\outputsArg{i}}
\newcommand{\noutputs}{n_\outputs}
\newcommand{\noutputsArg}[1]{n_{\outputs,#1}}
\newcommand{\noutputsi}{\noutputsArg{i}}
\newcommand{\outputsTruthRV}{\bar{\boldsymbol{\mathsf \outputssymb}}}
\newcommand{\outputsTruthRVArg}[1]{{\outputsTruthRV}_{#1}}
\newcommand{\outputsTruthRVi}{\outputsTruthRVArg{i}}
\newcommand{\noutputsTruthRVArg}[1]{n_{\outputsTruthRV,#1}}
\newcommand{\noutputsTruthRVi}{\noutputsTruthRVArg{i}}
\newcommand{\outputsTruthRVTrue}{\outputsTruthRV_\star}
\newcommand{\imageoutputsRVArg}[1]{\mathbb \outputssymbCap_{#1}}
\newcommand{\imageoutputsRVi}{\imageoutputsRVArg{i}}
\newcommand{\imageoutputsRV}{\mathbb \outputssymbCap}
\newcommand{\imageoutputsTruthRVArg}[1]{\bar{\mathbb \outputssymbCap}_{#1}}
\newcommand{\imageoutputsTruthRVi}{\imageoutputsTruthRVArg{i}}
\newcommand{\outputsPredictorRV}{\tilde{\boldsymbol{\mathsf x}}}
\newcommand{\outputsPredictorRVArg}[1]{{\outputsPredictorRV}_{#1}}
\newcommand{\inputsendoPredictorRV}{\bar{\boldsymbol{\mathsf y}}}
\newcommand{\inputsendoPredictorRVArg}[1]{{\inputsendoPredictorRV}_{#1}}
\newcommand{\inputsendoPredictorRVi}{\inputsendoPredictorRVArg{i}}
\newcommand{\inputsexosymb}{u}
\newcommand{\inputsexoRV}{{\boldsymbol{\mathsf \inputsexosymb}}}
\newcommand{\inputsexoRVArg}[1]{{\inputsexoRV}_{#1}}
\newcommand{\inputsexoRVi}{\inputsexoRVArg{i}}
\newcommand{\ninputsexoRV}{n_\inputsexoRV}
\newcommand{\ninputsexoRVArg}[1]{n_{\inputsexoRV,#1}}
\newcommand{\ninputsexoRVi}{\ninputsexoRVArg{i}}
\newcommand{\inputsexoRVDummy}{\inputsexoRV}
\newcommand{\inputsexoRVGlobal}{{{\mathsf
\inputsexosymb}}^\text{global}}
\newcommand{\inputsexoRVGlobalArg}[1]{\inputsexoRVGlobal_{#1}}
\newcommand{\inputsexo}{{\boldsymbol{ \inputsexosymb}}}
\newcommand{\inputsexoArg}[1]{{\inputsexo}_{#1}}
\newcommand{\inputsexoi}{\inputsexoArg{i}}
\newcommand{\ninputsexo}{n_\inputsexo}
\newcommand{\ninputsexoArg}[1]{n_{\inputsexo,#1}}
\newcommand{\ninputsexoi}{\ninputsexoArg{i}}
\newcommand{\inputsendosymb}{y}
\newcommand{\inputsendoRV}{{\boldsymbol{\mathsf \inputsendosymb}}}
\newcommand{\inputsendoRVArg}[1]{{\inputsendoRV}_{#1}}
\newcommand{\inputsendoRVi}{\inputsendoRVArg{i}}
\newcommand{\ninputsendoRV}{n_\inputsendoRV}
\newcommand{\ninputsendoRVArg}[1]{n_{\inputsendoRV,#1}}
\newcommand{\ninputsendoRVi}{\ninputsendoRVArg{i}}
\newcommand{\inputsendo}{{\boldsymbol{ \inputsendosymb}}}
\newcommand{\inputsendoArg}[1]{{\inputsendo}_{#1}}
\newcommand{\inputsendoi}{\inputsendoArg{i}}
\newcommand{\ninputsendoArg}[1]{n_{\inputsendo,#1}}
\newcommand{\ninputsendoi}{\ninputsendoArg{i}}
\newcommand{\qoioutputssymb}{z}
\newcommand{\qoioutputsRV}{{\boldsymbol{\mathsf \qoioutputssymb}}}
\newcommand{\nqoioutputsRV}{n_\qoioutputsRV}
\newcommand{\outputsToQoIRV}{\boldsymbol{\mathsf I}_{\outputsRV}^{\qoioutputsRV}}
\newcommand{\RVspaceDim}[1]{\mathbb{V}^{#1}}
\newcommand{\spaceDim}[1]{\mathbb{R}^{#1}}
\newcommand{\relativeResidual}{r}
\newcommand{\propagatorsymb}{f}
\newcommand{\propagatorRV}{{\boldsymbol{\mathsf \propagatorsymb}}}
\newcommand{\propagatorRVArg}[1]{{\propagatorRV}_{#1}}
\newcommand{\propagatorRVVec}{{\boldsymbol{\mathsf \propagatorsymb}_\text{vec}}}
\newcommand{\propagatorRVi}{\propagatorRVArg{i}}
\newcommand{\propagator}{{\boldsymbol{ \propagatorsymb}}}
\newcommand{\propagatorArg}[1]{{\propagator}_{#1}}
\newcommand{\propagatori}{\propagatorArg{i}}
\newcommand{\propagatorTruthRV}{\bar{\boldsymbol{\mathsf
\propagatorsymb}}}
\newcommand{\propagatorTruthRVArg}[1]{\propagatorTruthRV_{#1}}
\newcommand{\propagatorTruthRVi}{\propagatorTruthRVArg{i}}
\newcommand{\propagatorTruthRVVec}{\bar{\boldsymbol{\mathsf
\propagatorsymb}}_\text{vec}}
\newcommand{\image}[1]{\mathrm{Im{(#1)}}}
\newcommand{\imageProp}{\mathbb X}
\newcommand{\imagePropArg}[1]{\imageProp_{#1}}
\newcommand{\imagePropF}{\image{\propagatorRV}}
\newcommand{\imagePropFArg}[1]{\image{\propagatorRVArg{#1}}}
\newcommand{\imagePropFTruth}{\image{\propagatorTruthRV}}
\newcommand{\imagePropFTruthArg}[1]{\image{\propagatorTruthRVArg{#1}}}
\newcommand{\imagePropTruth}{\bar{\mathbb X}}
\newcommand{\imageOrthProp}{\mathbb X^\perp}
\newcommand{\imageOrthPropArg}[1]{\imageOrthProp_{#1}}
\newcommand{\projImageNo}{\boldsymbol{\mathsf P}}
\newcommand{\projImageTruthNo}{\bar{\boldsymbol{\mathsf P}}}
\newcommand{\projImage}[1]{\projImageNo #1}
\newcommand{\projImageArg}[1]{\projImageNo_{#1}}
\newcommand{\projImageTruthArg}[1]{\projImageTruthNo_{#1}}
\newcommand{\projImagePar}[1]{\projImageNo(#1)}
\newcommand{\projOrthImageNo}{\projImageNo^\perp}
\newcommand{\projOrthImage}[1]{\projOrthImageNo #1 }
\newcommand{\projOrthImageArg}[1]{\projOrthImageNo_{#1}}
\newcommand{\constant}{\boldsymbol{\mathsf c}}
\newcommand{\identity}{\boldsymbol{I}}
\newcommand{\identityRV}{\boldsymbol{\mathsf I}}
\newcommand{\propagatorFPsymb}{h}
\newcommand{\propagatorFPRV}{{\boldsymbol{\mathsf \propagatorFPsymb}}}
\newcommand{\propagatorFPTruthRV}{\bar{\boldsymbol{\mathsf
\propagatorFPsymb}}}
\newcommand{\propagatorFPTruthRVPerp}{\bar{\boldsymbol{\mathsf \propagatorFPsymb}}^{\perp}}
\newcommand{\propagatorTruthRVPerp}{\bar{\boldsymbol{\mathsf \propagatorsymb}}^{\perp}}
\newcommand{\propagatorTruthRVPerpArg}[1]{\propagatorTruthRVPerp_{#1}}
\newcommand{\FPoperatorsymb}{h}
\newcommand{\FPoperatorRV}{{\boldsymbol{\mathsf \FPoperatorsymb}}}
\newcommand{\FPoperatorRVJac}{{\FPoperatorRV}_{\mathrm{J}}}
\newcommand{\FPoperatorRVGS}{{\FPoperatorRV}_{\mathrm{GS}}}
\newcommand{\FPoperatorTruthRVJac}{\bar{\FPoperatorRV}_{\mathrm{J}}}
\newcommand{\FPoperatorTruthRVGS}{\bar{\FPoperatorRV}_{\mathrm{GS}}}
\newcommand{\sampleRV}{\bds{\mathsf E}}
\newcommand{\sampleRVArgs}[2]{\sampleRV^{#1}_{#2}}
\newcommand{\sampleinputsendoRVArg}[1]{\sampleRVArgs{\inputsendoRV}{#1}}
\newcommand{\sampleinputsexoRVArg}[1]{\sampleRVArgs{\inputsexoRV}{#1}}
\newcommand{\sampleoutputsRVArg}[1]{\sampleRVArgs{\outputsRV}{#1}}
\newcommand{\outputsToInputsRV}{\boldsymbol{\mathsf I}_{\outputsRV}^{\inputsendoRV}}
\newcommand{\resRV}{{\boldsymbol{\mathsf r}}}
\newcommand{\res}{{\boldsymbol{ r}}}
\newcommand{\resTruthRV}{\bar {\boldsymbol{\mathsf {r}}}}
\newcommand{\zero}{{\mathbf 0}}
\newcommand{\card}[1]{|#1|}
\newcommand{\underlyingRVuniv}{{\xi}}
\newcommand{\underlyingRVunivArg}[1]{\underlyingRVuniv_{#1}}
\newcommand{\underlyingRV}{{\boldsymbol{\underlyingRVuniv}}}
\newcommand{\stochasticDim}{{n_\underlyingRV}}
\newcommand{\PCEbasis}{{\psi}}
\newcommand{\PCEbasisUnivArg}[1]{\tilde{\PCEbasis}_{#1}}
\newcommand{\PCEbasisArg}[1]{\PCEbasis_{#1}}
\newcommand{\PCEbasisArgs}[2]{\PCEbasis_{#1}(#2)}
\newcommand{\PCEdimArgs}[2]{\card{\multiindexSetArgs{#1}{#2}}}
\newcommand{\PCEcoeffsArgs}[3]{\boldsymbol{#1}_{#2,#3}}
\newcommand{\PCEcoeffsArg}[2]{{#1}_{#2}}
\newcommand{\PCEcoeffsGlobalArgs}[3]{{#1}_{#2,#3}^\text{global}}
\newcommand{\PCEcoeffsGlobalFieldArgs}[2]{{#1}_{#2}^\text{global}}
\newcommand{\PCEcoeffsFieldArgs}[3]{{#1}_{#2}^{#3}}
\newcommand{\outputsPCEcoeffsArg}[2]{\PCEcoeffsArgs{\outputssymb}{#1}{#2}}
\newcommand{\outputsTruthPCEcoeffsArg}[2]{\PCEcoeffsArgs{\bar\outputssymb}{#1}{#2}}
\newcommand{\inputsendoPCEcoeffsArg}[2]{\PCEcoeffsArgs{\inputsendosymb}{#1}{#2}}
\newcommand{\inputsexoPCEcoeffsArg}[2]{\PCEcoeffsArgs{\inputsexosymb}{#1}{#2}}
\newcommand{\generalPCECoeffsArg}[1]{\PCEcoeffsArg{\generalRVsymb}{#1}}
\newcommand{\generalVecPCECoeffsArg}[1]{\PCEcoeffsArg{\generalRVsymbVec}{#1}}
\newcommand{\permutationsymb}{p}
\newcommand{\permutationTuple}{\bds{\permutationsymb}}
\newcommand{\permutationTupleTruth}{\bar\permutationTuple}
\newcommand{\permutationArg}[1]{\permutationsymb_{#1}}
\newcommand{\permutationTupleIt}[1]{\permutationTuple^{(#1)}}
\newcommand{\permutationRVMatsymb}{\mathsf P}
\newcommand{\permutationRVMatrixArgs}[2]{\bds{\permutationRVMatsymb}^{#1}_{#2}}
\newcommand{\lowerBlockRV}{\bds{\mathsf{L}}}
\newcommand{\upperBlockRV}{\bds{\mathsf{U}}}
\newcommand{\lowerBlockRVArg}[1]{\lowerBlockRV_{#1}}
\newcommand{\upperBlockRVArg}[1]{\upperBlockRV_{#1}}
\newcommand{\indexSet}{\mathcal I}
\newcommand{\indexSetArg}[1]{\indexSet_{#1}}
\newcommand{\indexSetArgs}[2]{\indexSet_{#1}^{#2}}
\newcommand{\nsequential}{n_\text{seq}}
\newcommand{\nsequentialArg}[1]{n_{\text{seq},#1}}
\newcommand{\DAGalgname}{\texttt{DAG}}
\newcommand{\Jacobialgname}{\texttt{Jacobi}}
\newcommand{\GaussseidelAlgName}{\texttt{GaussSeidel}}
\newcommand{\AAalgname}{\texttt{AndersonAcceleration}}
\newcommand{\normal}[1]{\mathcal N(#1)}
\newcommand{\lipschitzArg}[1]{L_{#1}(\inputsexoRV)}
\newcommand{\lipschitz}[1]{L_{#1}}
\newcommand{\norm}[1]{\|#1\|}
\newcommand{\multiindex}{\boldsymbol j}
\newcommand{\multiindexArg}[1]{ j_{#1}}
\newcommand{\multiindexSet}{\mathcal J}
\newcommand{\multiindexSetArgs}[2]{\multiindexSet_{#1,#2}}
\newcommand{\multiindexGlobalSetArgs}[1]{\multiindexSet_{\text{global}}}
\newcommand{\multiindexSetLow}{\multiindexSet_\text{low}}
\newcommand{\multiindexSetHigh}{\multiindexSet_\text{high}}
\newcommand{\multiindexSetInf}{\natnoZero{\stochasticDim}}
\newcommand{\errorVerb}{\outputsTruthRVTrue- \outputsRVTrue}
\newcommand{\errorInPlaneVerb}{\projImage{\outputsTruthRVTrue}- \outputsRVTrue}
\newcommand{\errorOutOfPlaneVerb}{\outputsTruthRVTrue - \projImage{\outputsTruthRVTrue}}
\newcommand{\error}{\errorVerb}
\newcommand{\errorInPlane}{\errorInPlaneVerb}
\newcommand{\errorOutOfPlane}{\errorOutOfPlaneVerb}
\newcommand{\LinearTruthMat}{{\boldsymbol{\mathsf{A}}}}
\newcommand{\LinearTruthMati}{\LinearTruthMat_{i}}
\newcommand{\captionFirst}{}
\newlist{Assumption}{enumerate}{1}
\setlist[Assumption]{label=A\arabic*}
\crefname{hypothesis}{Hypothesis}{Hypotheses}
\title{
The network uncertainty quantification method for\\ propagating uncertainties
in component-based systems
}
\author{
Kevin Carlberg\footnotemark[1] \and
     Sofia Guzzetti\footnotemark[2] \and Mohammad Khalil\footnotemark[1]
		 \and Khachik Sargsyan\footnotemark[1]
		 }
\begin{document}
\maketitle
\footnotetext[1]{Sandia National Laboratories, 7011 East Avenue, Livermore, CA
94550.  \email{kevintcarlberg@gmail.com}, \email{mkhalil@sandia.gov}, \email{ksargsy@sandia.gov}.}
\footnotetext[2]{
Emory University, 201 Dowman Dr, Atlanta, GA 30322.  \email{sofia.guzzetti@emory.edu}.}

\begin{abstract}
	This work introduces the network uncertainty quantification (NetUQ) method
	for performing uncertainty propagation in systems composed of interconnected
	components. The method assumes the existence of a collection of components,
	each of which is characterized by exogenous-input random variables (e.g.,
	material properties), endogenous-input random variables (e.g., boundary
	conditions defined by another component), output random variables (e.g.,
	quantities of interest), and a local
	uncertainty-propagation operator (e.g., provided by stochastic collocation)
	that computes output random variables from input random variables. The
	method assembles the full-system network by connecting components, which is
	achieved simply by associating endogenous-input random variables for each
	component with output random variables from other components; no other
	inter-component compatibility conditions are required.  The network
	uncertainty-propagation problem is: Compute output random variables for all
	components given all exogenous-input random variables. To solve this
	problem, the method applies classical relaxation methods (i.e., Jacobi and
	Gauss--Seidel iteration with Anderson acceleration), which require only
	black-box evaluations of component uncertainty-propagation operators.
	Compared with other available methods, this approach is applicable to any
	network topology (e.g., no restriction to feed-forward or two-component
	networks), promotes component independence by enabling components to employ
	tailored uncertainty-propagation operators, supports general functional
	representations of random variables, and requires no offline preprocessing
	stage. Also, because the method propagates functional representations of
	random variables throughout the network (and not, e.g., probability density
	functions), the joint distribution of any set of random variables
	throughout the network can be estimated \textit{a posteriori} in a
	straightforward manner. We perform supporting convergence and error analysis
	and execute numerical experiments that demonstrate the weak- and
	strong-scaling performance of the method.
\end{abstract}

\begin{keywords}
uncertainty propagation, domain decomposition, relaxation methods, Anderson
	acceleration, network uncertainty quantification
\end{keywords}

\begin{AMS}
35R60, 60H15, 60H35, 65C20, 49M20, 49M27, 65N55
\end{AMS}

\section{Introduction}

Many systems in science and engineering---ranging from power grids to gas
transfer systems to aircraft---comprise a collection of a large number of
interconnected components. Performing uncertainty propagation in these systems
is often essential for quantifying the influence of uncertainties on the
performance of the full system. Na\"ively, performing full-system uncertainty
propagation in this context requires (1) integrating all component models into
a single deterministic full-system model, and (2) executing uncertainty
propagation using the deterministic full-system model.  The first step is
often challenging or impossible, as components are often characterized by
vastly different physical phenomena and spatiotemporal scales, ensuring
compatibility between the meshes of neighboring components is difficult, and
components are often modeled using different simulation codes that can be
difficult to integrate. If the first step is achievable, then the second step
may still pose an insurmountable challenge due to the large-scale nature of
the deterministic full-system model.

To address these challenges, researchers have developed several classes of
methods that aim to decompose the full-system uncertainty-propagation problem
into tractable subproblems. Ideally, such a method should satisfy several
desiderata. First, the method should \textit{promote component independence}.
For example, each component should be able to use a tailored
uncertainty-propagation method, as it has been shown that different
uncertainty-propagation methods are better suited for different physics (see,
e.g., Ref.~\cite{constantine2009hybrid}); in addition, the approach should
avoid any coupled-component deterministic solves. Second, to be widely
applicable, the method should \textit{not be restricted to systems with
particular component-network topologies}, as systems often comprise complex
assemblies of components with two-way coupling between neighboring components.
Third, the method should characterize uncertainties using \textit{functional
representations of random variables}, and should not be restricted to
particular functional representations.  Characterizing random variables using
functional representations (i.e., as functions acting on the sample space)
allows for the joint distribution of any set of random variables throughout
the system to be estimated straightforwardly, and also facilitates other
post-processing activities (e.g., variance-based decomposition).  Supporting
general representations is also important, as techniques restricted to
polynomial-chaos representations \cite{ghanem2003stochastic,xiu2002wiener},
for example, are not compatible with other widely used random-variable
representations
\cite{wan2006multi,bilionis2012multi,bilionis2012multidimensional}.

The first class of methods replaces component high-fidelity models with
component surrogate models, and subsequently computes inexpensive full-system
samples either by propagating deterministic samples within a feed-forward
network \cite{martin2006methodology,sanson2018uncertainty}, or by using
domain-decomposition methods to converge these full-system samples
\cite{liao2015domain}. We point out that while these methods can significantly
reduce the computational cost of full-system uncertainty propagation, they do
not decompose the uncertainty-propagation task itself; thus, these methods do
not enable each component to use a tailored uncertainty-propagation method.
Relatedly, Ref.~\cite{friedman2018efficient} generates a surrogate model for
the coupling variables across the system to facilitate generating full-system
samples; however, this approach requires generating deterministic full-system
samples to construct the surrogate model.

The second class of methods considers coupled two-component systems and
restricts attention to
polynomial-chaos representations of random variables.  These techniques
focus primarily on reducing the dimensionality of the stochastic space
considered by each component;
Refs.~\cite{arnst2012measure,arnst2012dimension,arnst2014reduced} employ
Gauss--Seidel iteration to update the random variables for each component
(analogous to overlapping domain decomposition), while
Refs.~\cite{constantine2014efficient,mittalFlexible} propose intrusive coupled
formulations (analogous to non-overlapping domain decomposition).
Alternatively, Ref.\ \cite{chen2013flexible} considers linear feed-forward systems
and exploits the fact that---for such systems---the uncertainty-propagation
problems for different PCE coefficients decouple. We note that these
techniques do not consider the case where input random variables may be shared
across components, which may occur, for example if two components share a
boundary condition with uncertainty.

The third class of methods models a given component-based system as a network
of interconnected components, and propagates probability distributions through
the resulting network. The first contribution \cite{amaral2014decomposition}
restricted attention to feed-forward networks, while follow-on work
\cite{ghoreishi2016compositional} considered extensions to coupled
two-component networks.  Critically, because a random variable's marginal
distribution does not completely characterize its functional behavior, these
approaches encounter challenges for certain uncertainty-quantification tasks,
e.g., computing the joint distribution between random variables, compute
Sobol' indices for variance-based decomposition. However,
Ref.~\cite{amaral2014decomposition} provided a clever mechanism for recovering
joint distributions in the case of feed-forward networks.  A related approach
\cite{sankararaman2012likelihood} replaces coupling variables with conditional
probability densities and subsequently generates full-system samples by
propagating samples through the resulting feed-forward network; however, this
approach does not account for joint distributions between system inputs and
coupling variables.

The fourth class of methods stems from the field of multidisciplinary design
optimization, and is typically carried out in a reliability-analysis
context~\cite{yao2011}. Such methods are adopted from related deterministic
approaches via either matching
moments~\cite{gu2006implicit,du2005collaborative,chiralaksanakul2005first} or
reformulating reliability conditions as probabilistic constraints, i.e.,
enforcing interface-matching violation to occur with specified low
probability~\cite{kokkolaras2004design}. In both cases, these approaches
enable full-system uncertainty propagation by introducing first-order, local
perturbation assumptions with no support for general functional
representations, or by introducing strong decoupling assumptions that restrict
the set of supported network topologies~\cite{mahadevan2006}.


To satisfy the aforementioned desiderata and overcome some of the limitations
of previous contributions, this work proposes the network uncertainty
quantification (NetUQ) method, which is inspired by classical relaxation
techniques commonly employed in overlapping domain decomposition. The method simply
assumes the existence of a collection of components, each of which is
characterized by (1) \textit{exogenous-input random variables} (e.g., boundary
conditions, material properties, geometric variables), (2)
\textit{endogenous-input random variables} (e.g., boundary conditions imposed
by a neighboring component), (3) \textit{output random variables} (e.g.,
quantities of interest), and (4) an \textit{uncertainty-propagation
operator} that computes output random variables from input random variables
(e.g., non-intrusive spectral projection, stochastic Galerkin projection).  In
particular, different components can employ different functional
representations of random variables
and different uncertainty-propagation operators.  To construct
the network (i.e., a directed graph) from this
collection of components, the NetUQ method simply associates endogenous-input
random variables for each component with output random variables from other
components; the method makes no other requirements on inter-component
compatibility. This again promotes component independence, because, for
example, the computational meshes employed to discretize neighboring
components need not be perfectly matched. The resulting network
uncertainty-propagation problem becomes: Compute output random variables for
all components given all exogenous-input random variables.

To solve the network uncertainty-propagation problem, NetUQ applies the
classical relaxation techniques of Jacobi and Gauss--Seidel iteration.  Within
each Jacobi iteration, all network edges corresponding to endogenous-input
random variables are ``cut'' such that the endogenous-input random variables
are set to their values at the previous iteration; then, all components
perform local uncertainty propagation in an embarrassingly parallel manner to
compute their output random variables at the current iteration. Within each
Gauss--Seidel iteration, selected network edges are cut to create a
feed-forward network (i.e., a directed acyclic graph); then, the method
performs feed-forward uncertainty propagation to compute output random
variables at the current iteration. We equip each of these methods with
Anderson acceleration \cite{anderson1965iterative,fang2009two} to accelerate
convergence of the resulting fixed-point problem. We provide supporting error
analysis for the method, which quantifies the error incurred by the NetUQ
formulation for the case where the component uncertainty-propagation operators
comprise an approximation of underlying ``truth'' component
uncertainty-propagation operators; this arises, for example, in the case of
truncated polynomial-chaos representations.

The remainder of the paper is outlined as follows.  Section
\ref{sec:formulation} formulates the problem by first describing the local
component uncertainty-propagation problem (Section \ref{sec:localProb}) and
subsequently forming the network uncertainty-propagation problem by
associating endogenous-input random variables for each component with output
random variables from other components (Section \ref{sec:globalProblem}).
Section \ref{sec:relax} describes the two relaxation methods employed by NetUQ
to solve the network uncertainty-propagation problem: the Jacobi method
(Section \ref{sec:Jacobi}) and the Gauss--Seidel method (Section
\ref{sec:gaussseidel}). Subsequently, Section \ref{sec:conv} performs
convergence analysis (Section \ref{sec:convergence}) and introduces Anderson
acceleration as a mechanism to accelerate convergence of the fixed-point
iterations (Section \ref{sec:AA}). Section \ref{sec:error} performs error
analysis, first by deriving \textit{a priori} and \textit{a posteriori} error
bounds when the component uncertainty-propagation operators serve as
approximations of underlying ``truth'' uncertainty-propagation operators
(Section \ref{sec:errorBounds}), and second by performing error analysis in
the case where the component uncertainty-propagation operators restrict the
output random variables to reside in a subspace of a ``truth'' subspace
associated with the truth uncertainty-propagation operators (Section
\ref{sec:errorProj}). Next, Section \ref{sec:numericalExperiments} reports
numerical experiments that demonstrate the performance of NetUQ, namely its
ability (1) to accurately propagate uncertainties in large-scale networks
while promoting component independence, (2) to yield rapid convergence when
deployed with Anderson acceleration, and (3) to generate significant speedups
in both strong-scaling (Section \ref{sec:strong}) and weak-scaling (Section
\ref{sec:weak}) scenarios. Finally, Section \ref{sec:conclusions} concludes
the paper.

\section{Problem formulation}\label{sec:formulation}

We now formulate the problem of interest: uncertainty propagation in systems
composed of interconnected components. Section \ref{sec:localProb} provides
the formulation for uncertainty propagation at the component level, which
comprises the local problem, while Section \ref{sec:globalProblem} presents
the network uncertainty propagation problem, which comprises the global
problem.

\subsection{\textit{Local problem}: component uncertainty propagation}
\label{sec:localProb}

We begin by defining a probability space $\probabilitySpace$ and assuming that
the full system is composed of $\nsubsystems$ interconnected components, the $i$th of which
is characterized by
\begin{itemize}
\item
\textit{exogenous-input random variables}
$\inputsexoRVi:\samplespace\rightarrow\RR{\ninputsexoRVi}$, which correspond
to random variables that are input directly to the component (e.g., material
properties);
\item \textit{endogenous-input random variables}
	$\inputsendoRVi:\samplespace\rightarrow\RR{\ninputsendoRVi}$, which
		correspond to random variables that are input from a neighboring component
		(e.g., boundary conditions imposed by a neighboring component);
\item \textit{output random variables}
$\outputsRVi:\samplespace\rightarrow\RR{\noutputsRVi}$, which can correspond
to quantities of interest or random variables that associate with
		endogenous-input random variables for a neighboring component; and
	\item an
\textit{uncertainty-propagation operator}
\begin{equation} \label{eq:propagatorRViDef}
	\propagatorRVi:(\inputsendoRVi,\inputsexoRVi)\mapsto\outputsRVi
\end{equation}
with $\propagatorRVi:\RVspaceDim{\ninputsendoRVi}\times
\RVspaceDim{\ninputsexoRVi}\rightarrow
\RVspaceDim{\noutputsRVi}$, whose evaluation comprises the component
		uncertainty-propagation problem of computing output random variables from
		input random variables.
\end{itemize}
Here, $\RVspaceDim{}$ denotes the vector space of real-valued random variables
defined on $\probabilitySpace$, i.e., the set
of mappings from the sample space $\samplespace$ to the real numbers $\RR{}$.
We denote the norm on the vector space
$\RVspaceDim{}$ by
$\|\cdot\|$
(e.g., $\|\outputsRVDummyElement{}\| =
[\mathbb{E}[|\outputsRVDummyElement{}|^p]]^{1/p}$ for
$\outputsRVDummyElement{}\in\RVspaceDim{}$ and some $p$).
We also denote the norm on the vector space
$\RVspaceDim{N}$ by
$\|\cdot\|$
(e.g.,
$\|\outputsRVDummy\|=[\sum_{i=1}^N\|\outputsRVDummyElement{i}\|^{q}]^{1/{q}}$ for
$\outputsRVDummy\equiv[\outputsRVDummyElement{1}\ \cdots\
\outputsRVDummyElement{N}]^T\in\RVspaceDim{N}$ and
some $q$), where the use of the respective norms will be apparent from context.
We emphasize that input and output random variables are characterized from the
functional viewpoint, i.e., as functions acting on the sample space
$\samplespace$.

\begin{remark}[Polynomial-chaos expansions]\label{rem:PCE}
	Polynomial-chaos expansions
	(PCE)~\cite{ghanem2003stochastic,xiu2002wiener,xiu2003modeling} comprise a
	widely adopted approach for computing functional representations of random variables. We now describe
	how PCE representations correspond to the general framework of component
	uncertainty propagation outlined above.
	To apply PCE in this context, we
	assume the existence of
	independent ``germ'' random variables $\underlyingRV
	\equiv(\underlyingRVunivArg{1},\ldots,\underlyingRVunivArg{\stochasticDim})
	\in\RVspaceDim{\stochasticDim}$
	with known distribution, and subsequently express the input
	and output random variables as polynomial functions of
	these random variables, i.e.,
	\begin{equation} \label{eq:PCEexpansions}
	\inputsexoRVi
		=\sum_{\multiindex\in\multiindexSetArgs{\inputsexoRV}{i}}\PCEbasisArgs{\multiindex}{\underlyingRV}\inputsexoPCEcoeffsArg{i}{\multiindex},\quad
	\inputsendoRVi
	=\sum_{\multiindex\in\multiindexSetArgs{\inputsendoRV}{i}}\PCEbasisArgs{\multiindex}{\underlyingRV}\inputsendoPCEcoeffsArg{i}{\multiindex},\quad
	\outputsRVi
		=\sum_{\multiindex\in\multiindexSetArgs{\outputsRV}{i}}\PCEbasisArgs{\multiindex}{\underlyingRV}\outputsPCEcoeffsArg{i}{\multiindex},
\end{equation}
where
	$\multiindex\equiv(\multiindexArg{1},\ldots,\multiindexArg{\stochasticDim})$
	denotes a multi-index;
	$\multiindexSetArgs{\inputsexoRV}{i},
	\multiindexSetArgs{\inputsendoRV}{i},
\multiindexSetArgs{\outputsRV}{i}
	\subseteq\natnoZero{\stochasticDim}$
	denote multi-index sets
	 (e.g., defined using total-degree truncation
	such that $
	\multiindexSetArgs{\inputsexoRV}{i}=
	\multiindexSetArgs{\inputsendoRV}{i}=
\multiindexSetArgs{\outputsRV}{i}=
	\{\multiindex\in\natnoZero{\stochasticDim}\,|\,\|\multiindex\|_1\leq p\}$ for some
	$p$);
	$\inputsexoPCEcoeffsArg{i}{\multiindex}\in\RR{\ninputsexoRVi}$ for
	$\multiindex\in\multiindexSetArgs{\inputsexoRV}{i}$,
	$\inputsendoPCEcoeffsArg{i}{\multiindex}\in\RR{\ninputsendoRVi}$ for
	$\multiindex\in\multiindexSetArgs{\inputsendoRV}{i}$,
	and
	$\outputsPCEcoeffsArg{i}{\multiindex}\in\RR{\noutputsRVi}$ for
	$\multiindex\in\multiindexSetArgs{\outputsRV}{i}$ denote PCE
	coefficients; and the PCE basis functions
	$\PCEbasisArg{\multiindex}:\RR{\stochasticDim}\rightarrow\RR{}$
	are orthogonal with respect to the
	probability density function (PDF) of the germ random variables\footnote{For
	example, if the germ random variables
	$\underlyingRVunivArg{i}$, $i=1,\ldots,\stochasticDim$ are independent uniform random variables defined on the
	domain $[-1,
	1]$, the Legendre-polynomial PCE basis functions are employed, while if these
	random variables are independent
	standard normal random variables, Hermite-polynomial PCE basis
	functions are employed.}
	and are defined as a product of $\stochasticDim$ univariate
	polynomials such that
	\begin{equation}
		\PCEbasisArg{\multiindex}:\underlyingRV\mapsto\prod_{k=1}^{\stochasticDim}\PCEbasisUnivArg{\multiindexArg{k}}(\underlyingRVunivArg{k}),
	\end{equation}
	where $\PCEbasisUnivArg{i}:\RR{}\rightarrow\RR{}$ denotes a univariate
	polynomial of degree $i$.


According to the
Cameron--Martin theorem~\cite{cameron1947}, any finite-variance random
variable can be represented as a convergent expansion~\eqref{eq:PCEexpansions}
as the polynomial order and stochastic dimension $\stochasticDim$ grow, given
that the germ random variables $\underlyingRV$
comprise independent Gaussian random variables.
While this result has been conjectured for any general independent-component
germ~\cite{Xiu:2002cmame,arnst2010id}, the authors in
Ref.~\cite{ernst2012conv} prove a necessary and sufficient condition that the
underlying probability measure of the germ be uniquely determined by its
moments; this condition does not hold for a log-normal germ $\underlyingRV$,
for example.


	To apply PCE within a practical NetUQ setting, we first fix the germ random variables $\underlyingRV$
	(e.g., $\underlyingRV\sim\normal{\zero,\identity})$ and
	multi-index sets
	$\multiindexSetArgs{\inputsexoRV}{i}$,
	$\multiindexSetArgs{\inputsendoRV}{i}$, and
	$\multiindexSetArgs{\outputsRV}{i}$, and we subsequently determine the PCE basis functions
	$\PCEbasisArg{\multiindex}$, $\multiindex\in
	\multiindexSetArgs{\inputsexoRV}{i}\cup
	\multiindexSetArgs{\inputsendoRV}{i}\cup
	\multiindexSetArgs{\outputsRV}{i}$
	from the PDF of the germ random variables $\underlyingRV$.  With these
	quantities fixed, the random variables $\inputsexoRVi$, $\inputsendoRVi$,
	and $\outputsRVi$	are completely characterized via
	Eqs.~\eqref{eq:PCEexpansions} by their PCE coefficients
	$\inputsexoi\defeq(\inputsexoPCEcoeffsArg{i}{\multiindex})_{\multiindex\in\multiindexSetArgs{\inputsexoRV}{i}}
	\in\RR{\ninputsexoi} $, $\inputsendoi\defeq
	(\inputsendoPCEcoeffsArg{i}{\multiindex})_{\multiindex\in\multiindexSetArgs{\inputsexoRV}{i}}
	\in\RR{\ninputsendoi}$, and
	$\outputsi\equiv(\outputsPCEcoeffsArg{i}{\multiindex})_{\multiindex\in\multiindexSetArgs{\outputsRV}{i}}\in\RR{\noutputsi}$,
	respectively, where
	$\ninputsexoi \defeq \ninputsexoRVi\PCEdimArgs{\inputsexoRV}{i}$,
$\ninputsendoi \defeq \ninputsendoRVi\PCEdimArgs{\inputsendoRV}{i}$, and
	$\noutputsi \defeq \noutputsRVi\PCEdimArgs{\outputsRV}{i}$.

	Then, the uncertainty-propagation operator $\propagatorRVi$ associated with
	this PCE formulation effectively computes the mapping from the input PCE
	coefficients $\inputsexoi$, $\inputsendoi$ to the output PCE coefficients
	$\outputsi$.  Specifically, the discrete implementation of the
	uncertainty-propagation operator $\propagatorRVi$ in this case is a discrete
	uncertainty-propagation operator
\begin{equation} \label{eq:propagatoriDef}
	\propagatori:(\inputsendoi,\inputsexoi)\mapsto\outputsi
\end{equation}
with
	$\propagatori:\RR{\ninputsendoi\times\ninputsexoi}\rightarrow\RR{\noutputsi}$.

	The most common approaches for defining the discrete uncertainty-propagation
	operator $\propagatori$ are  {non-intrusive stochastic collocation}
	\cite{babuvska2007stochastic,nobile2008sparse},
	{non-intrusive spectral projection} \cite{lemaitre2010book,hosder2006non}, and {intrusive stochastic
	projection} (e.g., stochastic Galerkin
	\cite{deb2001solution,babuska2004galerkin,ghanem2003stochastic}, stochastic
	least-squares Petrov--Galerkin \cite{lee2018stochastic}).
\end{remark}

In the sequel, we present the methodology in the general setting of functional
representations of random variables. The discrete counterpart for PCE
representations (or for any other functional representation defined by a
finite number of deterministic scalar quantities) can be derived as above.

\subsection{\textit{Global problem}: network uncertainty
propagation}\label{sec:globalProblem}

We now describe the uncertainty-propagation problem for the full system
composed of $\nsubsystems$ interconnected components, each of which is
equipped with the component uncertainty-propagation formulation described in
Section \ref{sec:localProb}.
Denote by
$\inputsexoRV\defeq[\inputsexoRVArg{i}^T\ \cdots\
\inputsexoRVArg{\nsubsystems}^T]^T\in\RVspaceDim{\ninputsexoRV}$,
$\inputsendoRV\defeq[\inputsendoRVArg{i}^T\ \cdots\
\inputsendoRVArg{\nsubsystems}^T]^T\in\RVspaceDim{\ninputsendoRV}$, and
$\outputsRV\defeq[\outputsRVArg{i}^T\ \cdots\
\outputsRVArg{\nsubsystems}^T]^T\in\RVspaceDim{\noutputsRV}$
 the vectorization of the
component exogenous-inputs random variables, endogenous-input random
variables, and output random variables, respectively, such that
$\ninputsexoRV\defeq\sum_{i=1}^{\nsubsystems}\ninputsexoRVArg{i}$,
$\ninputsendoRV\defeq\sum_{i=1}^{\nsubsystems}\ninputsendoRVArg{i}$, and
$\noutputsRV\defeq\sum_{i=1}^{\nsubsystems}\noutputsRVArg{i}$. Then, the
full-system uncertainty-propagator operator is
\begin{equation} \label{eq:propagatorRV}
	\propagatorRVVec:(\inputsendoRV,\inputsexoRV)\mapsto\outputsRV,
\end{equation}
where $\propagatorRVVec:\RVspaceDim{\ninputsendoRV}\times
\RVspaceDim{\ninputsexoRV}\rightarrow
\RVspaceDim{\noutputsRV}$ comprises the vectorization of component
uncertainty-propagation operators such that
\begin{equation}
	\propagatorRVVec:(\inputsendoRV,\inputsexoRV) \mapsto
	\sum_{i=1}^\nsubsystems[\sampleoutputsRVArg{i}]^T
	\propagatorRVArg{i}(\sampleinputsendoRVArg{i}\inputsendoRV,
	\sampleinputsexoRVArg{i}
	\inputsexoRV).
\end{equation}
Here, $\sampleoutputsRVArg{i}\in\{0,1\}^{\noutputsRVArg{i}\times\noutputsRV}$,
$\sampleinputsendoRVArg{i}\in\{0,1\}^{\ninputsendoRVArg{i}\times\ninputsendoRV}$, and
$\sampleinputsexoRVArg{i}\in\{0,1\}^{\ninputsexoRVArg{i}\times\ninputsexoRV}$
denote selected rows of the identity matrix that extract quantities associated
with the $i$th component from the network such that
$
\sampleoutputsRVArg{i}\outputsRV\equiv\outputsRVArg{i}
	$,
$\sampleinputsendoRVArg{i}\inputsendoRV\equiv\inputsendoRVArg{i}$, and
$\sampleinputsexoRVArg{i}\inputsexoRV\equiv\inputsexoRVArg{i}$.

Critically, the full system is defined by connecting components such that each
component's endogenous inputs correspond to the outputs of another component.
We encode this relationship by the adjacency matrix
$\outputsToInputsRV\in\{0,1\}^{\ninputsendoRV\times\noutputsRV}$,
which satisfies the
relationship
\begin{equation} \label{eq:adjacencyRV}
\inputsendoRV \equiv \outputsToInputsRV\outputsRV
\end{equation}
such that $[\outputsToInputsRV]_{ij}$ is equal to one if
$[\inputsendoRV]_i\equiv[\outputsRV]_j$ and is zero otherwise.
Note that because
uncertainty propagation within each component is self contained, we do not
require self connections, and thus the diagonal blocks of the adjacency matrix
$\outputsToInputsRV$ are zero, where the $(i,j)$ block is a
$\ninputsendoRVi\times\noutputsRVi$ submatrix. We also admit the possibilities
that a single component output may not correspond to an endogenous input for
any other component (in
which case the associated adjacency-matrix column is zero), or may constitute
the endogenous input for multiple components (in which case the associated
adjacency-matrix column has more than one nonzero element).

Substituting Eq.~\eqref{eq:adjacencyRV} into the full-system
uncertainty-propagator \eqref{eq:propagatorRV} yields the following fixed-point problem:
Given exogenous-input random variables
$\inputsexoRV\in\RVspaceDim{\ninputsexoRV}$, compute output random variables
$\outputsRVTrue\equiv\outputsRVTrue(\inputsexoRV)\in\RVspaceDim{\noutputsRV}$
that satisfy
\begin{equation}\label{eq:globalResRV}
\resRV(\outputsRVTrue,\inputsexoRV) = \zero,
\end{equation}
where $\zero\in\RVspaceDim{\noutputsRV}$ denotes a vector of zero-valued random
variables and
\begin{equation}
\resRV:(\outputsRVDummy,\inputsexoRVDummy)\mapsto\outputsRVDummy -
\propagatorRVVec(\outputsToInputsRV\outputsRVDummy,\inputsexoRVDummy)
\end{equation}
with
$\resRV:\RVspaceDim{\noutputsRV\times\ninputsexoRV}\rightarrow\RVspaceDim{\noutputsRV}$
denoting the fixed-point residual.
To simplify notation, we introduce an alternative version of the full-system
uncertainty-propagation operator
\begin{equation}
	\propagatorRV:(\outputsRV,\inputsexoRV)\mapsto\propagatorRVVec(\outputsToInputsRV\outputsRV,\inputsexoRV),
\end{equation}
where
$\propagatorRV:\RVspaceDim{\noutputsRV}\times\RVspaceDim{\ninputsexoRV}\rightarrow\RVspaceDim{\noutputsRV}$.
Note that the fixed-point residual is equivalently defined as
$\resRV:(\outputsRVDummy,\inputsexoRVDummy)\mapsto\outputsRVDummy -
\propagatorRV(\outputsRVDummy,\inputsexoRVDummy)$.

We note that in many applications, computing specific quantities of interest
(QoI) comprises the ultimate goal of the analysis. In the present context, we
assign QoI random variables $\qoioutputsRV\in\RVspaceDim{\nqoioutputsRV}$ to
be a subset of the network outputs, i.e., there exists an extraction matrix
$\outputsToQoIRV\in\{0,1\}^{\nqoioutputsRV\times\noutputsRV}$ that satisfies
the relationship
\begin{equation} \label{eq:outputsToQoIRV}
	\qoioutputsRV \equiv \outputsToQoIRV\outputsRV
\end{equation}
such that $[\outputsToQoIRV]_{ij}$ is equal to one if
$[\qoioutputsRV]_i\equiv[\outputsRV]_j$ and is zero otherwise.

Figure \ref{fig:networkFormulation} provides a graphical depiction of the
NetUQ formulation, which can be interpreted as performing uncertainty
propagation in networks, wherein each node associates with a component
uncertainty-propagation operator $\propagatorRVi$, and each edge corresponds
to a collection of random variables.

\begin{figure}[htbp]
\centering
	\begin{subfigure}{0.45\textwidth}
		\centering
\includegraphics[width=0.4\textwidth]{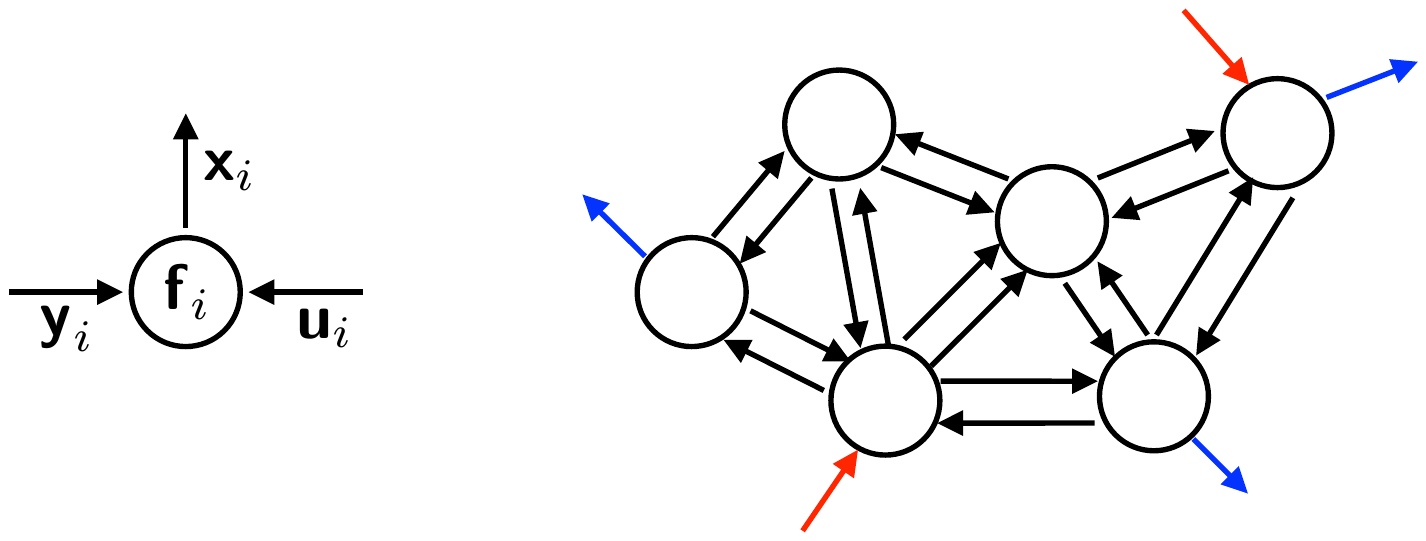}
		\caption{Component uncertainty propagation}\label{fig:subformulation}
	\end{subfigure}
	\begin{subfigure}{0.45\textwidth}
		\centering
\includegraphics[width=0.9\textwidth]{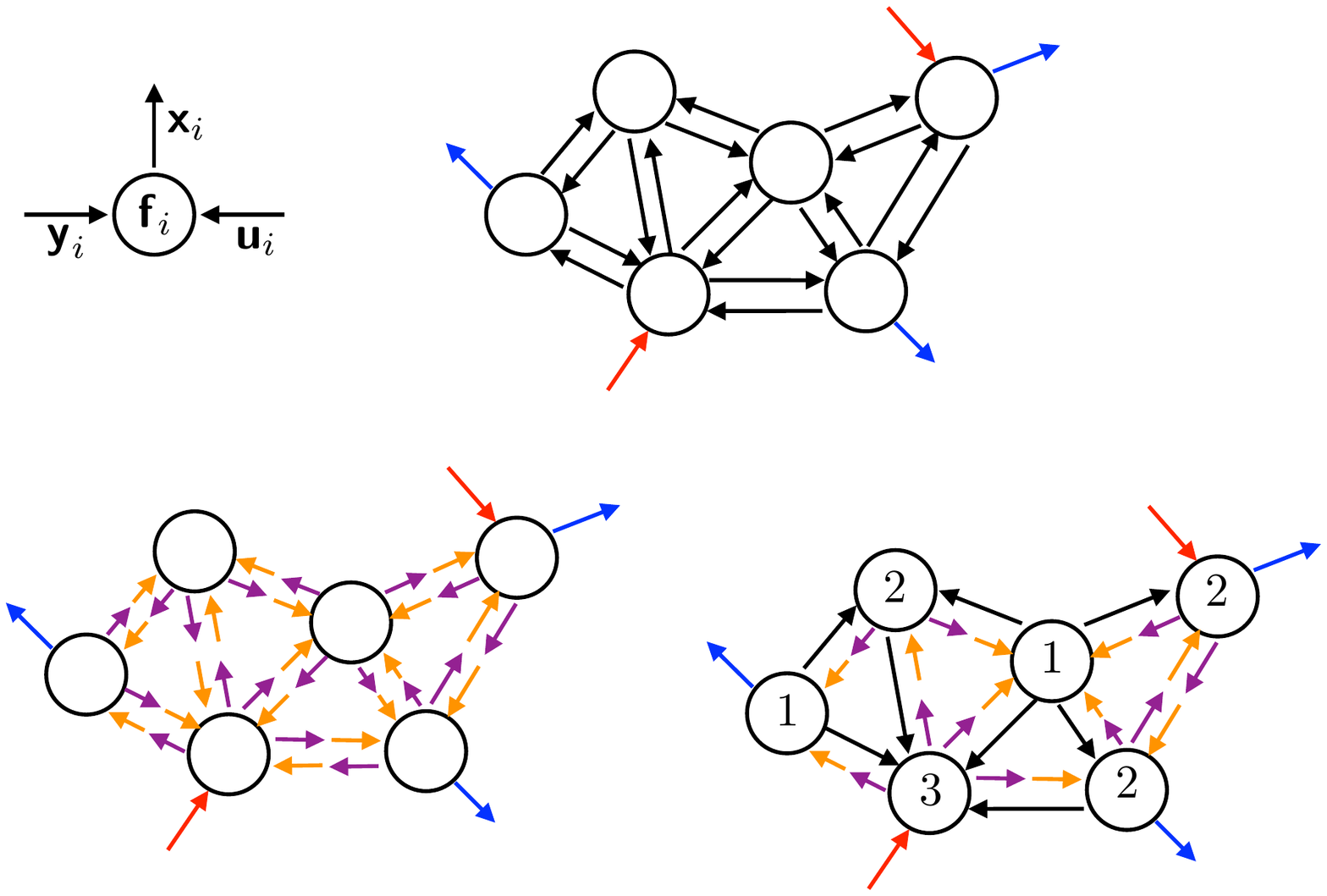}
		\caption{Network uncertainty propagation
	} \label{fig:netformulation}
	\end{subfigure}
	\label{fig:network}
	\caption{\textit{NetUQ formulation}. Figure shows how the network
	uncertainty-propagation problem can be derived from component
	uncertainty-propagation problems
	shown in Fig.\ \ref{fig:subformulation}. In Fig.\ \ref{fig:netformulation}, black edges correspond to endogenous-input random variables $\inputsendoRV$, red
	edges correspond to exogenous-input random variables $\inputsexoRV$, and blue edges
	correspond to QoI random variables $\qoioutputsRV$. Note that output random
	variables $\outputsRV$ are mapped to endogenous-input random
	variables $\inputsendoRV$ and/or $\qoioutputsRV$ via the operators
	$\outputsToInputsRV$ (see Eq.~\eqref{eq:adjacencyRV}) and $\outputsToQoIRV$
	(see Eq.~\eqref{eq:outputsToQoIRV}), respectively. We note that a network is
	defined from component uncertainty-propagation operators
	 $\propagatorRVArg{i}$, $i=1,\ldots,\nsubsystems$, the adjacency matrix
	 $\outputsToInputsRV$, and the extraction matrix $\outputsToQoIRV$.
	}
\label{fig:networkFormulation}
\end{figure}

\begin{remark}[Component independence]
This formulation promotes component independence, as the only formal
	inter-component compatibility requirement is the existence of an adjacency
	matrix $\outputsToInputsRV$ that associates component output random
	variables with endogenous input random variables of other components.  In
	particular, each set of random variables can be represented using different
	functional representations and components can employ completely different
	uncertainty-propagation operators.
\end{remark}

\section{Relaxation methods}
\label{sec:relax}

In principle, the fixed-point system \eqref{eq:globalResRV} can be solved
using a variety of techniques. While Newton's method is often employed for the
solution of systems of nonlinear algebraic equations due to its local
quadratic convergence rate, it relies on the ability to compute the gradient
$\partial \resRV/\partial\outputsRVDummy$. In the present context, this
requires computing the gradient of the output random variables with respect
to the endogenous-input random variables for each component, i.e.,
$\partial\propagatorRVi/\partial\inputsendoRVi$ must be computable for
$i=1,\ldots,\nsubsystems$.  This is frequently impractical, e.g., when the
simulation code used to compute a component uncertainty-propagation operator
is available only as a ``black box''.

As such, we proceed by assuming that only the component
uncertainty-propagation operators $\propagatorRVi$, $i=1,\ldots,\nsubsystems$
themselves are available, and consider classical relaxation methods (i.e.,
Jacobi, Gauss--Seidel) to solve the fixed-point problem \eqref{eq:globalResRV}, as these
methods do not require gradients and promote component independence. If each
node in the network corresponds to subdomain in a
partial-differential-equation problem, then this approach can be considered
an overlapping domain-decomposition strategy; however, the formulation does
not rely on any particular interpretation of the components.

\subsection{Jacobi method}\label{sec:Jacobi}

We first consider applying a variant of the Jacobi method (i.e., additive
Schwarz) to
solve fixed-point problem \eqref{eq:globalResRV}; Algorithm \ref{alg:jacobi} reports the
algorithm. At each iteration, this approach
performs independent, embarrassingly
parallel component uncertainty propagation (Steps
\ref{step:forbeginjacobi}--\ref{step:forendjacobi}), applies a relaxation
update (Step \ref{step:relaxationupdatejacobi}), and subsequently updates the endogenous-input
random variables from the output random variables just computed for
neighboring components (Step \ref{step:updateinputsendojacobi}).
From the network perspective, this approach
is equivalent to ``splitting'' all endogenous-input edges at each iteration and allowing
components to perform independent uncertainty
propagation; see Figure \ref{fig:jacobi}.

At the network level, Jacobi iteration $k$ can be expressed simply as
\begin{gather}\label{eq:jacobiFPone}
	\outputsPredictorRV =
	\propagatorRV(\iteration{\outputsRV}{k},\inputsexoRV)\\
	\label{eq:jacobiFPtwo}\iteration{\outputsRV}{k+1} =\relaxation\outputsPredictorRV +
(1-\relaxation)\iteration{\outputsRV}{k},
\end{gather}
where $\relaxation>0$ is a relaxation factor, which is set to $\relaxation=1$
for the classical Jacobi method. Under-relaxation corresponds to $\relaxation
< 0$, while over-relaxation corresponds to $\relaxation>1$.  This parameter
controls convergence of the algorithm as well as error analysis as will be
discussed in Remarks \ref{rem:controlConverge} and \ref{rem:satisfyAss}.

\begin{figure}[htbp]
\centering
	\begin{subfigure}{0.45\textwidth}
		\centering
\includegraphics[width=0.9\textwidth]{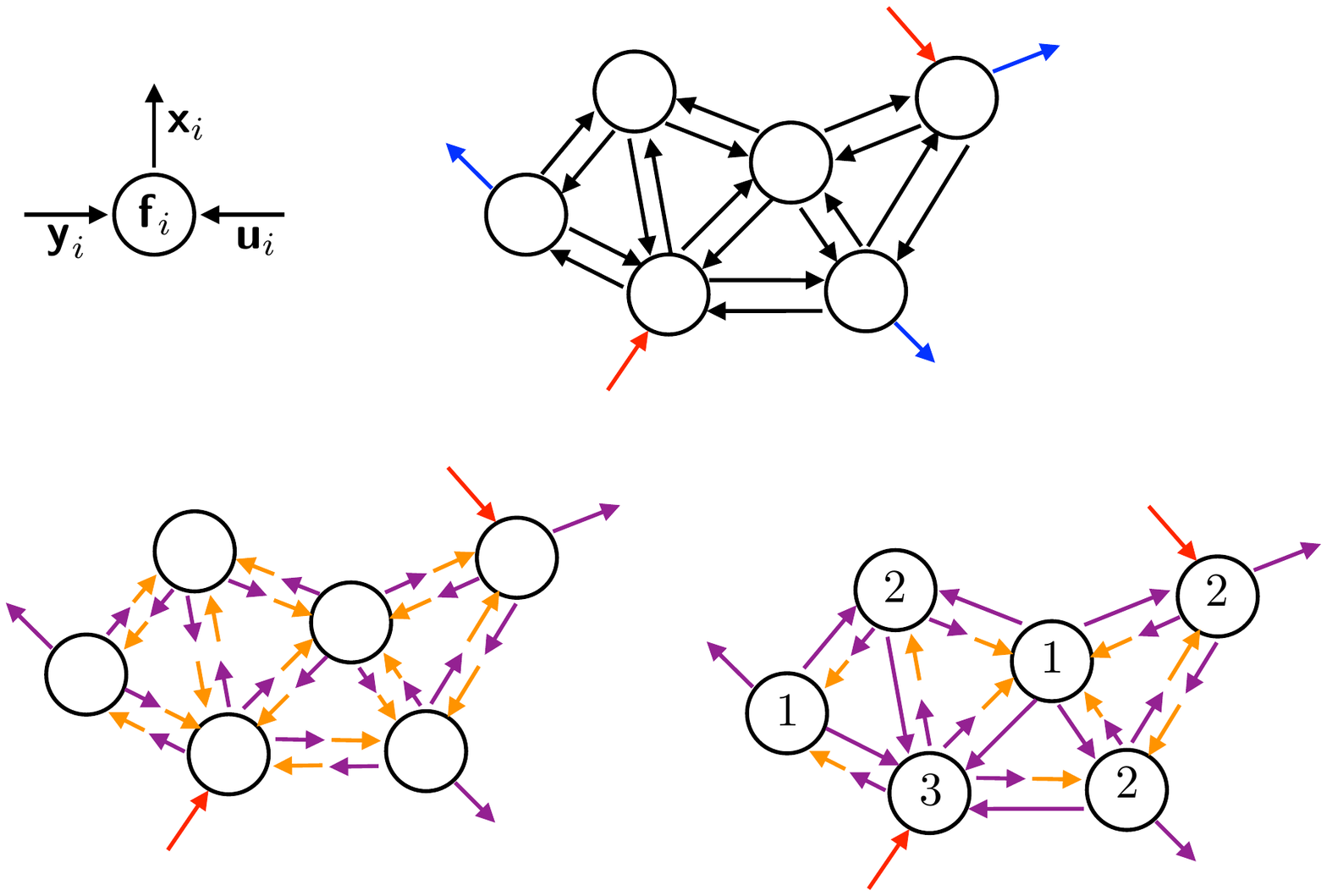}
		\caption{Jacobi method}\label{fig:jacobi}
	\end{subfigure}
	\begin{subfigure}{0.45\textwidth}
		\centering
\includegraphics[width=0.9\textwidth]{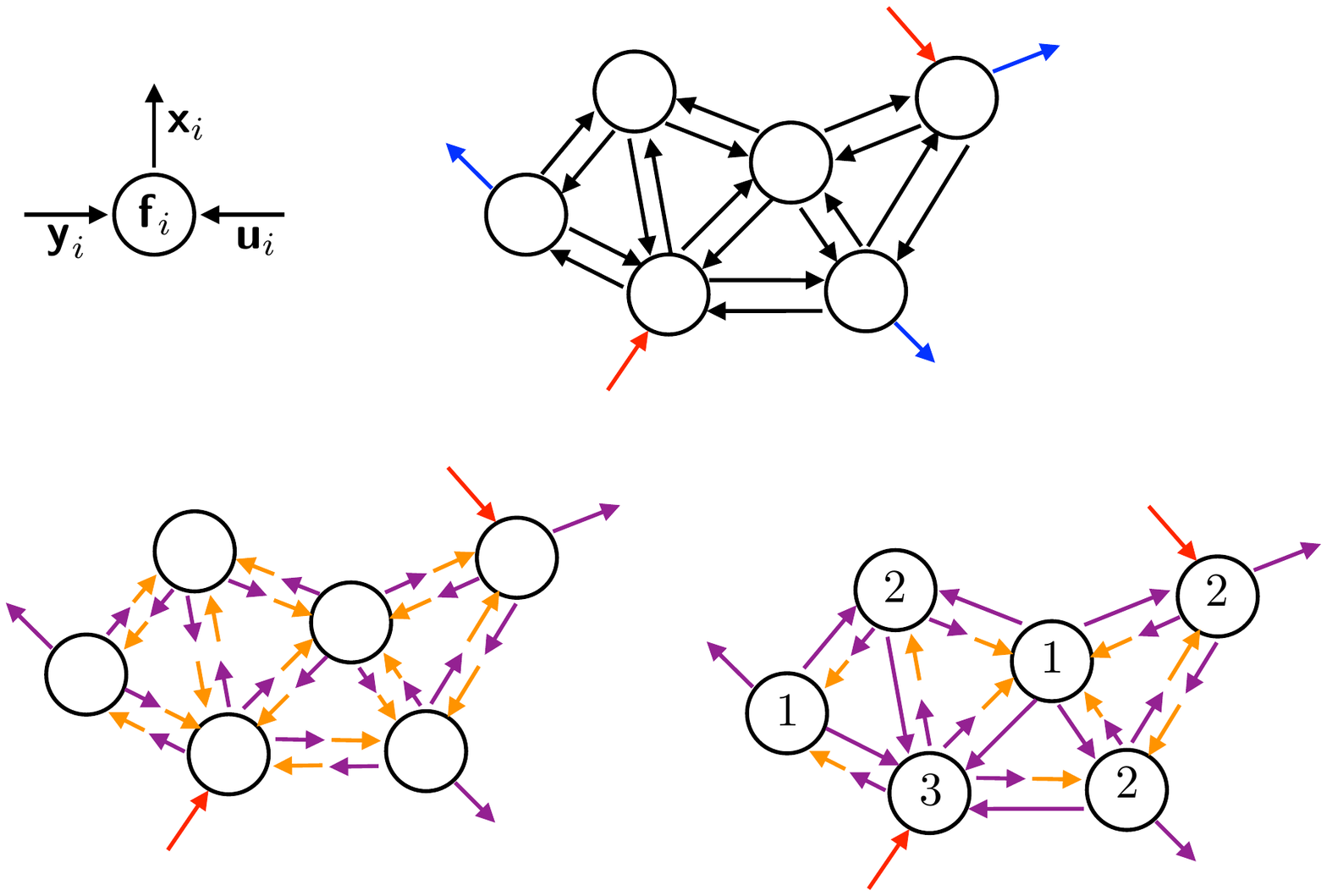}
		\caption{Gauss--Seidel method
	}\label{fig:gs}
	\end{subfigure}
	\caption{\textit{Relaxation methods}. Here, random variables that were
	computed at the previous iteration are shown in orange, random
	variables that are computed at the current iteration are shown in violet,
	and exogenous-input random variables---whose values are fixed---are
	shown in red.
	For the Gauss--Seidel method, the order in which a given node
	is processed is included in Figure \ref{fig:gs}.
	For the Jacobi method, note that all edges associated with
	endogenous-input random variables are ``split'' at each iteration to enable
	embarrassingly parallel component uncertainty propagation. On the other hand,
	for the Gauss--Seidel method, only a
	subset of such edges are ``split'' at each iteration such that a directed
	acyclic graph (DAG) is
	created. As a result, each iteration corresponds to uncertainty propagation
	in a feed-forward network. In this example, the chosen edge splitting yields
	a DAG that incurs three sequential steps per iteration. Note that many such
	DAGs exist, and each associates with a particular permutation of the
	adjacency matrix $\outputsToInputsRV$.}
\label{fig:relaxation}
\end{figure}

\begin{algorithm}[t]
	\caption{\Jacobialgname\ (The Jacobi method for NetUQ)}
	\label{alg:jacobi}
	\textbf{Input:} Component uncertainty-propagation operators
	$\propagatorRVArg{i}$, $i=1,\ldots,\nsubsystems$; adjacency matrix
	$\outputsToInputsRV$; exogenous-input random variables $\inputsexoRVi$,
	$i=1,\ldots,\nsubsystems$; initial guesses for endogenous-input random
	variables $\iteration{\inputsendoRVi}{0}$, $i=1,\ldots,\nsubsystems$;
	relaxation factor $\relaxation$\\
	\textbf{Output:} converged endogenous-input random variables $\iteration{\inputsendoRVi}{K}$;
	converged output random variables $\iteration{\outputsRVi}{K}$
	\begin{algorithmic}[1]
\STATE $k\leftarrow 0$
	\WHILE{not converged}
		\FOR[Execute in
		parallel]{$i=1,\ldots,\nsubsystems$}\label{step:forbeginjacobi}
\STATE $\outputsPredictorRVArg{i} =
	\propagatorRVArg{i}(\iteration{\inputsendoRVi}{k},\inputsexoRVi)$
		\ENDFOR\label{step:forendjacobi}
		\STATE\label{step:relaxationupdatejacobi} $\iteration{\outputsRV}{k+1} =\relaxation\outputsPredictorRV +
(1-\relaxation)\iteration{\outputsRV}{k}$
		\STATE\label{step:updateinputsendojacobi}
	$\iteration{\inputsendoRV}{k+1} =
	\outputsToInputsRV\iteration{\outputsRV}{k+1}$
	\STATE $k\leftarrow k+1$
	\ENDWHILE
	\STATE $K\leftarrow k$
\end{algorithmic}
\end{algorithm}

\subsection{Gauss--Seidel method}\label{sec:gaussseidel}

Naturally, we also consider a variant of the Gauss--Seidel method (i.e.,
multiplicative Schwarz) to solve
fixed-point problem \eqref{eq:globalResRV};
Algorithm \ref{alg:gaussseidel} reports the algorithm.
The benefit of the Gauss--Seidel method with respect to the Jacobi
method is that it enables more updated information to be used within each
iteration, at the expense of reduced parallelism.
To achieve this,
each Gauss--Seidel iteration performs feed-forward uncertainty propagation,
which requires ``splitting'' network edges in a manner that generates a directed
acyclic graph (DAG), and subsequently performing feed-forward uncertainty
propagation within the DAG; see Figure \ref{fig:gs}. In principle, each Gauss--Seidel
iteration can employ a different DAG; for simplicity in exposition, we
restrict consideration to a constant DAG.
\begin{algorithm}[t]
	\caption{\GaussseidelAlgName\ (The Gauss--Seidel method for NetUQ)}
	\label{alg:gaussseidel}
	\textbf{Input:} Component uncertainty-propagation operators
	$\propagatorRVArg{i}$, $i=1,\ldots,\nsubsystems$; adjacency matrix
	$\outputsToInputsRV$; exogenous-input random variables $\inputsexoRVi$,
	$i=1,\ldots,\nsubsystems$; initial guesses for endogenous-input random
	variables $\iteration{\inputsendoRVi}{0}$, $i=0,\ldots,\nsubsystems$;
	relaxation factor $\relaxation$; sequence of permutation matrices
	$\permutationTupleIt{k}$, $k=1,\ldots$\\
	\textbf{Output:} converged endogenous-input random variables $\iteration{\inputsendoRVi}{K}$;
	converged output random variables $\iteration{\outputsRVi}{K}$
	\begin{algorithmic}[1]
\STATE $k\leftarrow 0$
	\WHILE{not converged}
		\STATE $\permutationTuple\leftarrow\permutationTupleIt{k}$
		\STATE\label{step:DAG}$(\permutationRVMatrixArgs{\outputsRV}{\permutationTuple},
		\permutationRVMatrixArgs{\inputsendoRV}{\permutationTuple},\lowerBlockRVArg{\permutationTuple},\upperBlockRVArg{\permutationTuple},\indexSetArg{\permutationTuple}
		,\nsequentialArg{\permutationTuple}) = \DAGalgname(\permutationTuple,\outputsToInputsRV)
		$
		\STATE $\inputsendoPredictorRV\leftarrow\iteration{\inputsendoRV}{k}$
		\FOR[Execute sequentially]{$\ell=1,\ldots,\nsequentialArg{\permutationTuple}$}\label{step:forbegings}
		\FOR[Execute in parallel]{$i\in\indexSetArgs{\permutationTuple}{\ell}$}
		\STATE $\outputsPredictorRVArg{i} =
			\propagatorRVArg{i}(\inputsendoPredictorRVi,\inputsexoRVi)$
		\STATE $\inputsendoPredictorRV\leftarrow\inputsendoPredictorRV +
		\outputsToInputsRV
		[\sampleoutputsRVArg{i}]^T(\outputsPredictorRVArg{i}
		- \iteration{\outputsRVi}{k})
		$
		\ENDFOR
		\ENDFOR\label{step:forendgs}
		\STATE\label{step:relaxationupdategs} $\iteration{\outputsRV}{k+1} =\relaxation\outputsPredictorRV +
(1-\relaxation)\iteration{\outputsRV}{k}$
		\STATE\label{step:updateinputsendogs}
	$\iteration{\inputsendoRV}{k+1} =
	\outputsToInputsRV\iteration{\outputsRV}{k+1}$
	\STATE $k\leftarrow k+1$
	\ENDWHILE
	\STATE $K\leftarrow k$
\end{algorithmic}
\end{algorithm}

To generate a DAG, we introduce $\nsubsystems$-tuple
$\permutationTuple\equiv(\permutationArg{1},\ldots,\permutationArg{\nsubsystems})\in\natno{\nsubsystems}$
that provides a permutation of natural numbers one to $\nsubsystems$.
We then define the block permutation matrices
\begin{equation} \label{eq:permutationRVMatDef}
	\permutationRVMatrixArgs{\outputsRV}{\permutationTuple}\defeq
	\begin{bmatrix}
		\sampleoutputsRVArg{\permutationArg{1}}\\
		\vdots\\
		\sampleoutputsRVArg{\permutationArg{\nsubsystems}}
	\end{bmatrix},\quad
\permutationRVMatrixArgs{\inputsendoRV}{\permutationTuple}\defeq
	\begin{bmatrix}
		\sampleinputsendoRVArg{\permutationArg{1}}\\
		\vdots\\
		\sampleinputsendoRVArg{\permutationArg{\nsubsystems}}
	\end{bmatrix}
\end{equation}
and block decomposition of the permuted adjacency matrix
\begin{equation} \label{eq:adjacencyDecomp}
\permutationRVMatrixArgs{\inputsendoRV}{\permutationTuple}\outputsToInputsRV
[\permutationRVMatrixArgs{\outputsRV}{\permutationTuple}]^T =
\lowerBlockRVArg{\permutationTuple} +
\upperBlockRVArg{\permutationTuple},
\end{equation}
where
$\lowerBlockRVArg{\permutationTuple}\in\{0,1\}^{\ninputsendoRV\times\noutputsRV}$
is strictly block lower triangular
and
$\upperBlockRVArg{\permutationTuple}\in\{0,1\}^{\ninputsendoRV\times\noutputsRV}$
is strictly block upper triangular.

By ``splitting'' all edges in the adjacency matrix $\outputsToInputsRV$ whose
indices match the nonzero elements of
$[\permutationRVMatrixArgs{\inputsendoRV}{\permutationTuple}]^T\upperBlockRVArg{\permutationTuple}
\permutationRVMatrixArgs{\outputsRV}{\permutationTuple}
$, we can create a DAG, as the components
can be processed in sequential order $\permutationArg{i}$,
$i=1,\ldots,\nsubsystems$.
At the network level, Gauss--Seidel iteration $k$ can be expressed
as
\begin{gather}\label{eq:GSFPone}
	\outputsPredictorRV =
	\propagatorRVVec(
	[\permutationRVMatrixArgs{\inputsendoRV}{\permutationTuple}]^T
	\lowerBlockRVArg{\permutationTuple}\permutationRVMatrixArgs{\outputsRV}{\permutationTuple}
\outputsPredictorRV +
	[\permutationRVMatrixArgs{\inputsendoRV}{\permutationTuple}]^T
\upperBlockRVArg{\permutationTuple}
	\permutationRVMatrixArgs{\outputsRV}{\permutationTuple}
	\iteration{\outputsRV}{k},\inputsexoRV)\\
	\label{eq:GSFPTwo}\iteration{\outputsRV}{k+1} =\relaxation\outputsPredictorRV +
(1-\relaxation)\iteration{\outputsRV}{k},
\end{gather}
where again $\relaxation>0$ is a relaxation factor and
the implicit expression \eqref{eq:GSFPone} can be rewritten explicitly
via recursion as
\begin{gather}\label{eq:GSFPonerecurse}
	\outputsPredictorRVArg{\permutationArg{i}}
=
	\propagatorRVArg{\permutationArg{i}}
\left(
\sampleinputsendoRVArg{\permutationArg{i}}
\outputsToInputsRV
\sum_{j=1}^{i-1}
[\sampleoutputsRVArg{\permutationArg{j}}]^T
	\outputsPredictorRVArg{\permutationArg{j}}
+
\sampleinputsendoRVArg{\permutationArg{i}}
\outputsToInputsRV
\sum_{j=i+1}^{\nsubsystems}
[\sampleoutputsRVArg{\permutationArg{j}}]^T
\sampleoutputsRVArg{\permutationArg{j}}
\iteration{\outputsRV}{k}
,\inputsexoRV\right),\quad i=1,\ldots,\nsubsystems.
\end{gather}

By comparing with the Jacobi update
\eqref{eq:jacobiFPone}, it is apparent that
the Gauss--Seidel update to $\outputsPredictorRV$ in Eq.~\eqref{eq:GSFPone} is
implicit rather than explicit, and so uses more updated information than
Jacobi at each iteration; however, this is done at the expense of requiring
sequential processing of the components.

To mitigate the sequential-processing burden, we
observe that additional parallelism may be exposed by examining the sparsity
pattern of $\lowerBlockRVArg{\permutationTuple}$, as this matrix encodes
dependencies among the preserved edges in the (permuted) network.

Algorithm \ref{alg:DAG} describes this process. Given a permutation
tuple
$\permutationTuple$, this algorithm effectively ``splits'' all edges
associated with nonzero elements of $
[\permutationRVMatrixArgs{\inputsendoRV}{\permutationTuple}]^T\upperBlockRVArg{\permutationTuple}
\permutationRVMatrixArgs{\outputsRV}{\permutationTuple} $, and analyzes the
sparsity pattern of the block lower triangular matrix
$\lowerBlockRVArg{\permutationTuple}$ to determine (1) the minimum number of
sequential steps required to propagate uncertainties in the resulting DAG, and
(2) which components can be processed within each sequential step. This
procedure is called in Step \ref{step:DAG} of Algorithm \ref{alg:gaussseidel}
to ensure the Gauss--Seidel iterations employ the fewest number of
sequential steps.
Clearly, different
permutations
$\permutationTuple$ will associate with different
numbers of required sequential steps; graph coloring \cite{jensen2011graph}
provides a mechanism to determine the minimum number of sequential steps for a
given network. In addition to this consideration, different permutations
may lead to different convergence rates as will be discussed in
Remark \ref{rem:controlConverge}.

\begin{algorithm}[t]
	\caption{\DAGalgname\ (Generate a DAG from a permutation)}
	\label{alg:DAG}
	\textbf{Input:} Permutation
	$\permutationTuple\equiv(\permutationArg{1},\ldots,\permutationArg{\nsubsystems})$
	of the natural numbers one to $\nsubsystems$; adjacency matrix
	$\outputsToInputsRV$\\
	\textbf{Output:}
	Permutation matrices
$\permutationRVMatrixArgs{\outputsRV}{\permutationTuple}$ and
	$\permutationRVMatrixArgs{\inputsendoRV}{\permutationTuple}$; block lower
	and upper triangular matrices $\lowerBlockRVArg{\permutationTuple}$ and
	$\upperBlockRVArg{\permutationTuple}$;
	tuple of components in sequential processing order
	$\indexSetArg{\permutationTuple}\defeq(\indexSetArgs{\permutationTuple}{1},\ldots,\indexSetArgs{\permutationTuple}{\nsequentialArg{\permutationTuple}})$; number of
	sequential steps $\nsequentialArg{\permutationTuple}$
	\begin{algorithmic}[1]
		\STATE Compute
$\permutationRVMatrixArgs{\outputsRV}{\permutationTuple}$,
	$\permutationRVMatrixArgs{\inputsendoRV}{\permutationTuple}$,
	$\lowerBlockRVArg{\permutationTuple}$, and
		$\upperBlockRVArg{\permutationTuple}$ from
		Eqs.~\eqref{eq:permutationRVMatDef} and \eqref{eq:adjacencyDecomp}.
\STATE $k\leftarrow 0$; $\indexSet = \emptyset$
		\WHILE{$\indexSet\neq\{1,\ldots,\nsubsystems\}$}
		\STATE $k\leftarrow k+1$
		\STATE $\indexSetArgs{\permutationTuple}{k}\leftarrow \emptyset$
		\FOR{$i=1,\ldots,\nsubsystems$}
		\IF{block $(i,j)$ of $\lowerBlockRVArg{\permutationTuple}$ is zero for all
		$j\in\{1,\ldots,\nsubsystems\}\setminus\indexSet$}
		\STATE
		$\indexSetArgs{\permutationTuple}{k}\leftarrow\indexSetArgs{\permutationTuple}{k}\cup
		\permutationArg{i}$
		\ENDIF
		\ENDFOR
		\STATE $\indexSet\leftarrow\indexSet
		\cup\indexSetArgs{\permutationTuple}{k}$
	\ENDWHILE
		\STATE $\nsequentialArg{\permutationTuple}=  k$
\end{algorithmic}
\end{algorithm}

\section{Convergence analysis and Anderson acceleration}\label{sec:conv}

Both Jacobi and Gauss--Seidel methods can be expressed as fixed-point
iterations
\begin{equation} \label{eq:FPoperatorRV}
\iteration{\outputsRV}{k+1} =
\FPoperatorRV(\iteration{\outputsRV}{k},\inputsexoRVDummy)
\end{equation}
for $k=0,\ldots,K$,
where the mappings
$\outputsRVDummy\mapsto\outputsRV-\FPoperatorRV(\outputsRV,\inputsexoRVDummy)$ and
$\outputsRVDummy\mapsto
\outputsRVDummy -
\propagatorRV(\outputsRVDummy,\inputsexoRVDummy)$
have the same roots for any fixed value of the
exogenous-input random variables $\inputsexoRV\in\RVspaceDim{\ninputsexoRV}$.
In the case of Jacobi iteration, we see from
Eqs.~\eqref{eq:jacobiFPone}--\eqref{eq:jacobiFPtwo} that the
fixed-point operator is $\FPoperatorRV\leftarrow\FPoperatorRVJac$ with
\begin{equation}\label{eq:fpJac}
	\FPoperatorRVJac:(\outputsRVDummy,\inputsexoRVDummy;\relaxation)\mapsto
\relaxation\propagatorRV(\outputsRVDummy,\inputsexoRV)
	+ (1-\relaxation)\outputsRVDummy. 
\end{equation}
In the case of Gauss--Seidel iteration, we see from
Eqs.~\eqref{eq:GSFPone}--\eqref{eq:GSFPonerecurse} that the associated
operator is
$\FPoperatorRV\leftarrow\FPoperatorRVGS$, which is
defined recursively for $i=1,\ldots,\nsubsystems$ as
\begin{align}\label{eq:fpGS}
	\begin{split}
		\sampleoutputsRVArg{\permutationArg{i}}\FPoperatorRVGS:(\outputsRVDummy,\inputsexoRVDummy;\relaxation,\permutationTuple)\mapsto&
\relaxation
	\propagatorRVArg{\permutationArg{i}}
	\left(
\sampleinputsendoRVArg{\permutationArg{i}}
	\outputsToInputsRV
		\sum_{j=1}^{i-1}
	[\sampleoutputsRVArg{\permutationArg{j}}]^T
	\sampleoutputsRVArg{\permutationArg{j}}
\FPoperatorRVGS(\outputsRVDummy,\inputsexoRVDummy;\relaxation,\permutationTuple)
 +
\sampleinputsendoRVArg{\permutationArg{i}}
	\outputsToInputsRV
		\sum_{j=i+1}^{\nsubsystems}
	[\sampleoutputsRVArg{\permutationArg{j}}]^T
\sampleoutputsRVArg{\permutationArg{j}}
\outputsRVDummy
		,\inputsexoRV\right)\\
		&+ (1-\relaxation)\sampleoutputsRVArg{\permutationArg{i}}\outputsRVDummy,
	\end{split}
\end{align}
where we have made its dependence on the relaxation factor $\relaxation$ and
permutation $\permutationTuple$ explicit.

\subsection{Convergence analysis}\label{sec:convergence}

We now adopt a standard result from the theory of fixed-point iterations
to the present context. This result is valid for any (fixed) value of the
exogenous-input random variables $\inputsexoRV\in\RVspaceDim{\ninputsexoRV}$
and relies on the following assumption:
\begin{Assumption}[series=assumption]
\item \label{ass:lipschitz}
The fixed-point operator $\propagatorFPRV$ is a contraction, i.e., it is Lipschitz continuous such that
	\begin{equation}
	\norm{\propagatorFPRV(\outputsRVDummyArg{1},\inputsexoRVDummy)
-\propagatorFPRV(\outputsRVDummyArg{2},\inputsexoRVDummy)
	}\leq\lipschitzArg{\propagatorFPRV}\norm{\outputsRVDummyArg{1}-\outputsRVDummyArg{2}}
	\end{equation}
	for all
	$\outputsRVDummyArg{1},\outputsRVDummyArg{2}\in\RVspaceDim{\noutputsRV}$
	with $\lipschitzArg{\propagatorFPRV}<1$ for any $\inputsexoRV\in\RVspaceDim{\ninputsexoRV}$.
\end{Assumption}
\begin{proposition}\label{prop:converge}
	If Assumption  \ref{ass:lipschitz} holds,
	then the iterations
	\begin{equation}\label{eq:fixedPointIt}
	\iteration{\outputsRV}{k+1}=\FPoperatorRV(
	\iteration{\outputsRV}{k}, \inputsexoRV),\quad k=0,1,\ldots
\end{equation}
converge q-linearly to a fixed point $\outputsRVTrue$ satisfying
	$\outputsRVTrue = \FPoperatorRV(
	\outputsRVTrue, \inputsexoRV)$ with convergence rate
	$\lipschitzArg{\FPoperatorRV}$ for any
	$\inputsexoRV\in\RVspaceDim{\ninputsexoRV}$.
\end{proposition}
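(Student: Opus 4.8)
The plan is to invoke the classical Banach fixed-point theorem, specialized to the normed vector space $\RVspaceDim{\noutputsRV}$. The one structural ingredient beyond Assumption~\ref{ass:lipschitz} that is needed is completeness of $\RVspaceDim{\noutputsRV}$ under the chosen norm $\norm{\cdot}$ --- which holds, for instance, when $\norm{\cdot}$ is an $L^p$ norm so that $\RVspaceDim{}$ is a Banach space --- so that Cauchy sequences converge; I would state this at the outset. Note also that Assumption~\ref{ass:lipschitz} is phrased for $\FPoperatorRV$, the same operator appearing in~\eqref{eq:fixedPointIt}, so it applies directly.

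First I would fix $\inputsexoRV\in\RVspaceDim{\ninputsexoRV}$ and abbreviate the contraction constant $\lipschitzArg{\FPoperatorRV}<1$ by $L$. Applying Assumption~\ref{ass:lipschitz} to consecutive iterates of~\eqref{eq:fixedPointIt} gives $\norm{\iteration{\outputsRV}{k+1}-\iteration{\outputsRV}{k}} = \norm{\FPoperatorRV(\iteration{\outputsRV}{k},\inputsexoRV)-\FPoperatorRV(\iteration{\outputsRV}{k-1},\inputsexoRV)} \le L\,\norm{\iteration{\outputsRV}{k}-\iteration{\outputsRV}{k-1}}$, and hence by induction $\norm{\iteration{\outputsRV}{k+1}-\iteration{\outputsRV}{k}} \le L^{k}\norm{\iteration{\outputsRV}{1}-\iteration{\outputsRV}{0}}$. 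For $m>k$, the triangle inequality together with the geometric series then yields $\norm{\iteration{\outputsRV}{m}-\iteration{\outputsRV}{k}} \le \sum_{j=k}^{m-1} L^{j}\norm{\iteration{\outputsRV}{1}-\iteration{\outputsRV}{0}} \le \tfrac{L^{k}}{1-L}\norm{\iteration{\outputsRV}{1}-\iteration{\outputsRV}{0}}$, which tends to $0$ as $k\to\infty$; thus $\{\iteration{\outputsRV}{k}\}$ is Cauchy and, by completeness, converges to some $\outputsRVTrue\in\RVspaceDim{\noutputsRV}$.

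Next I would verify that $\outputsRVTrue$ is a fixed point and that the convergence is q-linear with rate $L$. Continuity of $\FPoperatorRV(\cdot,\inputsexoRV)$ (immediate from the Lipschitz bound) permits passing to the limit in~\eqref{eq:fixedPointIt}; concretely, $\norm{\outputsRVTrue-\FPoperatorRV(\iteration{\outputsRV}{k},\inputsexoRV)} \le L\,\norm{\outputsRVTrue-\iteration{\outputsRV}{k}}\to 0$ while $\FPoperatorRV(\iteration{\outputsRV}{k},\inputsexoRV)=\iteration{\outputsRV}{k+1}\to\outputsRVTrue$, giving $\outputsRVTrue=\FPoperatorRV(\outputsRVTrue,\inputsexoRV)$. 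For the rate, applying Assumption~\ref{ass:lipschitz} with arguments $\iteration{\outputsRV}{k}$ and $\outputsRVTrue$ gives $\norm{\iteration{\outputsRV}{k+1}-\outputsRVTrue} = \norm{\FPoperatorRV(\iteration{\outputsRV}{k},\inputsexoRV)-\FPoperatorRV(\outputsRVTrue,\inputsexoRV)} \le L\,\norm{\iteration{\outputsRV}{k}-\outputsRVTrue}$, which is exactly q-linear convergence with rate $L=\lipschitzArg{\FPoperatorRV}$. Uniqueness (so that ``the'' fixed point is well defined) follows from the same contraction estimate: any two fixed points $\outputsRVTrue,\outputsRVTrue'$ satisfy $\norm{\outputsRVTrue-\outputsRVTrue'}\le L\,\norm{\outputsRVTrue-\outputsRVTrue'}$, forcing $\norm{\outputsRVTrue-\outputsRVTrue'}=0$ since $L<1$.

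Since the proposition is a textbook-style corollary of the contraction mapping principle, I do not anticipate a genuine obstacle; the only point demanding care is making explicit the completeness of $(\RVspaceDim{\noutputsRV},\norm{\cdot})$, since the abstract setting of the paper admits general functional representations of random variables and a general norm. If one preferred not to assume completeness, convergence of the iterates to a limit would instead have to be posited (or the space completed), but the q-linear rate estimate --- the substantive content here --- holds verbatim in any case.
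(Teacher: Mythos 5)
Your proof is correct, and it takes a genuinely more complete route than the paper's. The paper's proof consists only of the final rate estimate: it presupposes the existence of a fixed point $\outputsRVTrue = \FPoperatorRV(\outputsRVTrue,\inputsexoRV)$, subtracts that identity from \eqref{eq:fixedPointIt}, and applies the Lipschitz bound of Assumption \ref{ass:lipschitz} to obtain $\norm{\iteration{\outputsRV}{k+1}-\outputsRVTrue}\leq\lipschitzArg{\FPoperatorRV}\norm{\iteration{\outputsRV}{k}-\outputsRVTrue}$ --- exactly your last display. What you add is the full Banach fixed-point argument: the telescoping/geometric-series bound showing the iterates are Cauchy, the appeal to completeness of $(\RVspaceDim{\noutputsRV},\norm{\cdot})$ to obtain a limit, the continuity argument identifying the limit as a fixed point, and the uniqueness estimate. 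This buys you existence and uniqueness of $\outputsRVTrue$, which the paper leaves implicit, and your observation that completeness is a genuine additional hypothesis beyond Assumption \ref{ass:lipschitz} (automatic for the $L^p$-type norms the paper suggests, but not stated in the abstract setting of $\RVspaceDim{}$ as arbitrary maps $\samplespace\to\RR{}$) is a fair and substantive point. The paper's terser version is adequate in context because the same existence question is handled the same way throughout Section \ref{sec:error} (fixed points are simply posited as solutions of \eqref{eq:globalResRV}), but your version is the one that actually justifies the definite article in ``a fixed point $\outputsRVTrue$.''
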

\begin{proof}
	Subtracting
	$\outputsRVTrue = \FPoperatorRV(
	\outputsRVTrue, \inputsexoRV)$
	from Eq.~\eqref{eq:fixedPointIt},
	and employing Lipschitz continuity of the fixed-point operator
	$\FPoperatorRV$, yields
	$
	\norm{\iteration{\outputsRV}{k+1} -
	\outputsRVTrue}\leq\lipschitzArg{\FPoperatorRV}\norm{
\iteration{\outputsRV}{k} -
	\outputsRVTrue}
$,
	which leads to the desired result.
\end{proof}

\begin{remark}[Controlling convergence]\label{rem:controlConverge}
	For a fixed network,
	the Jacobi fixed-point operator $\FPoperatorRVJac(\cdot,\cdot;\relaxation)$ is
	parameterized by the relaxation factor $\relaxation$, while the
	Gauss--Seidel fixed-point operator
	$\FPoperatorRVGS(\cdot,\cdot;\relaxation,\permutationTuple)$ is parameterized by
	both the relaxation factor $\relaxation$ and the permutation
	$\permutationTuple$. Thus, these parameters provide mechanisms for
	controlling convergence, as modifying them modifies the Lipschitz
	constants of these fixed-point operators, which in turn affects satisfaction of
	Assumption \ref{ass:lipschitz} and the convergence rate according to
	Proposition \ref{prop:converge}. The numerical experiments vary these
	quantities to numerically assess their effect on performance of NetUQ.
\end{remark}

\subsection{Anderson acceleration}\label{sec:AA}

Proposition \ref{prop:converge} showed that as long as the fixed-point
operator is a contraction, the fixed-point iterations generated by Jacobi or
Gauss--Seidel iterations converge q-linearly. Compared with other methods for
solving systems of nonlinear equations (e.g., Newton's method), this rate of
convergence is rarely competitive; as such, relaxation methods are used
typically only in cases where these types of solvers are not applicable (e.g.,
if the residual Jacobian cannot be exposed easily).

To improve the convergence rate of fixed-point iterations, researchers have
recently rediscovered \cite{fang2009two} Anderson acceleration
\cite{anderson1965iterative}, which provides a practical modification to the
fixed-point-iteration updates and has been shown to substantially improve
convergence. For linear problems, one can demonstrate that
convergence with Anderson acceleration is no slower than the original fixed-point iteration
\cite{toth2015convergence} and is essentially equivalent to
the generalized minimum residual (GMRES) method \cite{walker2011anderson}. For
nonlinear problems, it
can be shown that Anderson acceleration is a multisecant method
\cite{fang2009two}, and similar convergence-rate results exist
\cite{toth2015convergence}. All convergence results hold under the assumption
of a contractive mapping, i.e., under Assumption \ref{ass:lipschitz}.

Algorithm \ref{alg:AA} reports the Anderson acceleration method adopted to the
present context, where the fixed-point operator is either
$\FPoperatorRV\leftarrow\FPoperatorRVJac$ or
$\FPoperatorRV\leftarrow\FPoperatorRVGS$. We note that employing a memory of $\memory=0$ recovers the
original fixed-point iterations \eqref{eq:FPoperatorRV}.  Numerical
experiments performed in Section \ref{sec:numericalExperiments} illustrate the
ability of Anderson acceleration to substantially improve the convergence rate
of the proposed NetUQ method in the case of both Jacobi and Gauss--Seidel
iteration.

\begin{algorithm}[t]
	\caption{\AAalgname\ (Anderson acceleration)}
	\label{alg:AA}
	\textbf{Input:} Fixed-point operator $\FPoperatorRV$;
exogenous-input random variables $\inputsexoRV$;
	initial guess for output random variables
	$\iteration{\outputsRV}{0}$; memory
	$\memory\in\natnoZero{}$ \\
	\textbf{Output:}
	Converged solution $\iteration{\outputsRV}{K}$
	\begin{algorithmic}[1]
		\STATE $k\leftarrow 0 $
	\WHILE{not converged}
		\STATE $\iteration{\outputsRVTmp}{k+1} = \FPoperatorRV(\iteration{\outputsRV}{k};\inputsexoRV)$
		\STATE $\iteration{\AAres}{k} = \iteration{\outputsRVTmp}{k+1} - \iteration{\outputsRV}{k}$
		\STATE $\iteration{\memory}{k} = \min(\memory,k)$
		\STATE Compute
		$(\iteration{\alpha_0}{k},\ldots,\iteration{\alpha_{\iteration{\memory}{k}}}{k})$
		as the solution to
		\begin{align*}
		\begin{split}
			\underset{(\alpha_0,\ldots,\alpha_{\iteration{\memory}{k}})}{\text{minimize}}\
			 \|\sum_{i=0}^{\iteration{\memory}{k}}
			\iteration{\AAres}{k-i}
			\alpha_i \|_2\quad
			\text{subject to} &\sum_{i=0}^{\iteration{\memory}{k}}\alpha_i = 1
		\end{split}
		\end{align*}
		\STATE
		$\iteration{\outputsRV}{k+1}=\sum_{i=0}^{\iteration{\memory}{k}}\iteration{\alpha_i}{k}\iteration{\outputsRVTmp}{k+1-i}$
		\STATE $k\leftarrow k+1 $
	\ENDWHILE
		\STATE $K\leftarrow k$
\end{algorithmic}
\end{algorithm}

\section{Error analysis}\label{sec:error}

We now assess the error incurred by the network formulation when
the full-system uncertainty-propagation operator $\propagatorRV$ constitutes an
approximation of an underlying ``truth'' operator. In
particular, it is common practice for the component uncertainty-propagation
operators $\propagatorRVi$, $i=1,\ldots,\nsubsystems$ to comprise
approximations of underlying ``truth'' component
uncertainty-propagation operators
$\propagatorTruthRVi:\RVspaceDim{\ninputsendoRVi}\times
\RVspaceDim{\ninputsexoRVi}\rightarrow \RVspaceDim{\noutputsRVi}$,
$i=1,\ldots,\nsubsystems$.  For example, this occurs when
component uncertainty-propagation operators $\propagatorRVi$, $i=1,\ldots,\nsubsystems$ associate with computing
low-dimensional polynomial-chaos representation of the output random
variables $\outputsRVi$, $i=1,\ldots,\nsubsystems$, while
$\propagatorTruthRVi$, $i=1,\ldots,\nsubsystems$ might associate with
computing a high-dimensional polynomial-chaos representation of the output
random variables.


We begin by noting that for any function
$\propagatorFPRV:\RVspaceDim{\noutputsRV}\times\RVspaceDim{\ninputsexoRV}\rightarrow
\RVspaceDim{\noutputsRV}
$ for which the mappings
$\outputsRVDummy\mapsto\outputsRV-\FPoperatorRV(\outputsRV,\inputsexoRVDummy)$ and
$\outputsRVDummy\mapsto
\outputsRVDummy -
\propagatorRV(\outputsRVDummy,\inputsexoRVDummy)$
have the same roots for any $\inputsexoRV\in\RVspaceDim{\ninputsexoRV}$,
the solution $\outputsRVTrue$ to the fixed-point system
\eqref{eq:globalResRV} also satisfies the fixed-point system
\begin{equation} \label{eq:FP}
	\outputsRVTrue = \propagatorFPRV(\outputsRVTrue,\inputsexoRV)
\end{equation}
for any $\inputsexoRV\in\RVspaceDim{\ninputsexoRV}$.
For example,
$\propagatorFPRV(\cdot,\cdot)=\FPoperatorRVJac(\cdot,\cdot;\relaxation)$
and
$\propagatorFPRV(\cdot,\cdot)=\FPoperatorRVGS(\cdot,\cdot;\relaxation,\permutationTuple)$
are the Jacobi and Gauss--Seidel fixed-point operators,
respectively, which satisfy this property.

We also define the truth full-system uncertainty-propagator as
$\propagatorTruthRVVec:\RVspaceDim{\ninputsendoRV}\times
\RVspaceDim{\ninputsexoRV}\rightarrow
\RVspaceDim{\noutputsRV}$, which comprises the vectorization of truth component
uncertainty-propagation operators such that
\begin{equation}\label{eq:propagatorTruthRV}
	\propagatorTruthRVVec:(\inputsendoRV,\inputsexoRV) \mapsto
	\sum_{i=1}^\nsubsystems[\sampleoutputsRVArg{i}]^T
	\propagatorTruthRVArg{i}(\sampleinputsendoRVArg{i}\inputsendoRV,
	\sampleinputsexoRVArg{i}
	\inputsexoRV).
\end{equation}
Substituting the relationship between outputs and endogenous
inputs~\eqref{eq:adjacencyRV} into the truth full-system
uncertainty-propagator \eqref{eq:propagatorTruthRV} yields the following truth fixed-point problem:
Given exogenous-input random variables
$\inputsexoRV\in\RVspaceDim{\ninputsexoRV}$, compute truth output random variables
$\outputsTruthRVTrue\equiv\outputsTruthRVTrue(\inputsexoRV)\in\RVspaceDim{\noutputsRV}$
that satisfy
\begin{equation}\label{eq:globalResTruthRV}
\resTruthRV(\outputsTruthRVTrue,\inputsexoRV) = \zero,
\end{equation}
where
\begin{equation}
	\resTruthRV :(\outputsRVDummy,\inputsexoRVDummy)\mapsto\outputsRVDummy
- \propagatorTruthRVVec(\outputsToInputsRV\outputsRVDummy,\inputsexoRVDummy)
\end{equation}
with
$\resTruthRV:\RVspaceDim{\noutputsRV}\times\RVspaceDim{\ninputsexoRV}\rightarrow\RVspaceDim{\noutputsRV}$
denotes the truth fixed-point residual.
As before, to simplify notation, we introduce an alternative version of the
truth full-system uncertainty-propagation operator
\begin{equation}
	\propagatorTruthRV:(\outputsRV,\inputsexoRV)\mapsto\propagatorTruthRVVec(\outputsToInputsRV\outputsRV,\inputsexoRV),
\end{equation}
where
$\propagatorTruthRV:\RVspaceDim{\noutputsRV}\times\RVspaceDim{\ninputsexoRV}\rightarrow\RVspaceDim{\noutputsRV}$
Note that the truth fixed-point residual is equivalently defined as
$\resTruthRV:(\outputsRVDummy,\inputsexoRVDummy)\mapsto\outputsRVDummy -
\propagatorTruthRV(\outputsRVDummy,\inputsexoRVDummy)$.

As above, we note that for any function
$\propagatorFPTruthRV:\RVspaceDim{\noutputsRV}\times\RVspaceDim{\ninputsexoRV}\rightarrow
\RVspaceDim{\noutputsRV}
$ for which the mappings
$\outputsRVDummy\mapsto\outputsRV-\propagatorFPTruthRV(\outputsRV,\inputsexoRVDummy)$ and
$\outputsRVDummy\mapsto
\outputsRVDummy
- \propagatorTruthRV(\outputsRVDummy,\inputsexoRVDummy) $
have the same roots for any  $\inputsexoRV\in\RVspaceDim{\ninputsexoRV}$,
the solution $\outputsTruthRVTrue$ to the fixed-point system
\eqref{eq:globalResTruthRV} also satisfies the fixed-point system
\begin{equation}\label{eq:FPtrue}
	\outputsTruthRVTrue =
	\propagatorFPTruthRV(\outputsTruthRVTrue,\inputsexoRV).
\end{equation}
We refer to such an operator $\propagatorFPTruthRV$ as
the truth fixed-point operator.
For example, one could employ
$\propagatorFPTruthRV(\cdot,\cdot)=\FPoperatorTruthRVJac(\cdot,\cdot;\relaxationTruth)$
with $\FPoperatorTruthRVJac:
(\outputsRVDummy,\inputsexoRVDummy;\relaxationTruth)\mapsto
\relaxationTruth\propagatorRV(\outputsRVDummy,\inputsexoRV)
	+ (1-\relaxationTruth)\outputsRVDummy
$
as a Jacobi fixed-point operator parameterized by the relaxation
factor $\relaxationTruth>0$,
or
$\propagatorFPRV(\cdot,\cdot)=\FPoperatorTruthRVGS(\cdot,\cdot;\relaxationTruth,\permutationTupleTruth)$
as a Gauss--Seidel fixed-point operator defined analogously to
$\FPoperatorRVGS$ and parameterized by the
relaxation factor $\relaxationTruth$ and permutation
$\permutationTupleTruth\in\natno{\nsubsystems}$.

In the remainder of this section, we drop dependence of all quantities on the
exogenous-input random variables $\inputsexoRVDummy$ for notational
simplicity. The results can be interpreted as holding
for any (fixed) value of $\inputsexoRV\in\RVspaceDim{\ninputsexoRV}$.

\subsection{Error bounds}\label{sec:errorBounds}
We first derive \textit{a priori} and \textit{a posteriori} bounds for the
error $\norm{\outputsTruthRVTrue- \outputsRVTrue}$.
We begin by introducing the following assumption, which will be used to derive
the \textit{a posteriori} error bound:
\begin{Assumption}[resume=assumption]
\item\label{ass:lipschitzTruth} The truth fixed-point operator $\propagatorFPTruthRV$ is Lipschitz continuous such that
	$\norm{\propagatorFPTruthRV(\outputsRVDummyArg{1})
-\propagatorFPTruthRV(\outputsRVDummyArg{2})
	}\leq\lipschitz{\propagatorFPTruthRV}\norm{\outputsRVDummyArg{1}-\outputsRVDummyArg{2}}$
	for all
	$\outputsRVDummyArg{1},\outputsRVDummyArg{2}\in\RVspaceDim{\noutputsRV}$
	with $\lipschitz{\propagatorFPTruthRV}<1$.
\end{Assumption}
\begin{remark}[Practical satisfaction of Assumptions \ref{ass:lipschitz} and
	\ref{ass:lipschitzTruth}]\label{rem:satisfyAss}
	Analogously to Remark \ref{rem:controlConverge}, we note that the
	flexibility in the definitions of the fixed-point operators
	$\propagatorFPRV$ and $\propagatorFPTruthRV$ allows them to be defined such
	that
	Assumptions \ref{ass:lipschitz} and \ref{ass:lipschitzTruth} are satisfied.
	For example, if
	$\propagatorFPRV(\cdot,\cdot)=\FPoperatorRVJac(\cdot,\cdot;\relaxation)$ (resp.\
	$\propagatorFPTruthRV(\cdot,\cdot)=\FPoperatorTruthRVJac(\cdot,\cdot;\relaxationTruth)$), then the relaxation factor
	$\relaxation$ (resp.\ $\relaxationTruth$) can often be chosen to satisfy
	Assumption \ref{ass:lipschitz} (resp.\ \ref{ass:lipschitzTruth}).
	Alternatively,
	if
	$\propagatorFPRV(\cdot,\cdot)=\FPoperatorRVGS(\cdot,\cdot;\relaxation,\permutationTuple)$ (resp.\
	$\propagatorFPTruthRV(\cdot,\cdot)=\FPoperatorTruthRVGS(\cdot,\cdot;\relaxationTruth,\permutationTupleTruth)$), then the relaxation factor
	$\relaxation$ and permutation $\permutationTuple$ (resp.\ $\relaxationTruth$ and
	$\permutationTupleTruth$) can often be chosen to satisfy
	Assumption \ref{ass:lipschitz} (resp.\ \ref{ass:lipschitzTruth}).
	Indeed, ensuring that Assumptions \ref{ass:lipschitz} and
	\ref{ass:lipschitzTruth} are satisfied is required to ensure the associated
	fixed-point iterations converge according to Proposition
	\ref{prop:converge}.
\end{remark}

We proceed by deriving the error bounds, which rely on Assumptions
\ref{ass:lipschitz} and \ref{ass:lipschitzTruth}.
\begin{proposition}[\textit{A priori} error bound for $\norm{\outputsTruthRVTrue- \outputsRVTrue}$] \label{prop:apriori}
	If Assumption \ref{ass:lipschitz} holds,
	then
	\begin{equation} \label{eq:aprioribound}
	\norm{\outputsTruthRVTrue- \outputsRVTrue} \leq
	 \frac{1}{1-\lipschitz{\propagatorFPRV}}\norm{\outputsTruthRVTrue-\propagatorFPRV(\outputsTruthRVTrue)}.
\end{equation}
\end{proposition}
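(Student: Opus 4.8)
The plan is to exploit that both $\outputsRVTrue$ and $\outputsTruthRVTrue$ are fixed points of $\propagatorFPRV$ and $\propagatorFPTruthRV$ respectively, and to bound the difference using only the contractivity of $\propagatorFPRV$ (Assumption \ref{ass:lipschitz}). The key idea is to write $\outputsTruthRVTrue - \outputsRVTrue$ by inserting and subtracting $\propagatorFPRV(\outputsTruthRVTrue)$, so that the difference splits into a ``residual'' term $\outputsTruthRVTrue - \propagatorFPRV(\outputsTruthRVTrue)$ — which measures how far the truth solution is from being a fixed point of the approximate operator — and a term $\propagatorFPRV(\outputsTruthRVTrue) - \propagatorFPRV(\outputsRVTrue)$ that can be controlled by the Lipschitz constant $\lipschitz{\propagatorFPRV}$.

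Concretely, first I would write, using $\outputsRVTrue = \propagatorFPRV(\outputsRVTrue)$ from Eq.~\eqref{eq:FP},
\begin{equation*}
\outputsTruthRVTrue - \outputsRVTrue = \outputsTruthRVTrue - \propagatorFPRV(\outputsRVTrue) = \bigl(\outputsTruthRVTrue - \propagatorFPRV(\outputsTruthRVTrue)\bigr) + \bigl(\propagatorFPRV(\outputsTruthRVTrue) - \propagatorFPRV(\outputsRVTrue)\bigr).
\end{equation*}
Then I would take norms and apply the triangle inequality, followed by Assumption \ref{ass:lipschitz} on the second term:
\begin{equation*}
\norm{\outputsTruthRVTrue - \outputsRVTrue} \leq \norm{\outputsTruthRVTrue - \propagatorFPRV(\outputsTruthRVTrue)} + \lipschitz{\propagatorFPRV}\norm{\outputsTruthRVTrue - \outputsRVTrue}.
\end{equation*}
Since $\lipschitz{\propagatorFPRV} < 1$, the term $\lipschitz{\propagatorFPRV}\norm{\outputsTruthRVTrue - \outputsRVTrue}$ can be absorbed into the left-hand side, and dividing through by $1 - \lipschitz{\propagatorFPRV} > 0$ yields exactly \eqref{eq:aprioribound}.

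There is no real obstacle here — the argument is the standard perturbation estimate for fixed points of a contraction. The only point requiring a small amount of care is that the absorption step is legitimate precisely because $\lipschitz{\propagatorFPRV} < 1$ guarantees $1 - \lipschitz{\propagatorFPRV}$ is strictly positive, and implicitly that $\norm{\outputsTruthRVTrue - \outputsRVTrue}$ is finite so that subtracting it from both sides is valid; the latter follows since $\outputsTruthRVTrue, \outputsRVTrue \in \RVspaceDim{\noutputsRV}$, a normed vector space. I note that this bound is termed \emph{a priori} because the right-hand side involves the (unknown) truth solution $\outputsTruthRVTrue$ rather than a computed iterate; a companion \emph{a posteriori} bound, replacing $\outputsTruthRVTrue$ by a computed approximation and invoking Assumption \ref{ass:lipschitzTruth}, presumably follows by a symmetric argument using contractivity of $\propagatorFPTruthRV$ instead.
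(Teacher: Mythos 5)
Your proposal is correct and follows essentially the same argument as the paper's proof: the same decomposition via adding and subtracting $\propagatorFPRV(\outputsTruthRVTrue)$, the triangle inequality, the Lipschitz bound on $\propagatorFPRV(\outputsTruthRVTrue)-\propagatorFPRV(\outputsRVTrue)$, and absorption of the contraction term using $\lipschitz{\propagatorFPRV}<1$. The only cosmetic difference is that you substitute $\propagatorFPTruthRV(\outputsTruthRVTrue)=\outputsTruthRVTrue$ at the outset, whereas the paper carries the term $\propagatorFPTruthRV(\outputsTruthRVTrue)-\propagatorFPRV(\outputsTruthRVTrue)$ and invokes that identity at the end.
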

\begin{proof}
	We subtract Eq.~\eqref{eq:FP} from \eqref{eq:FPtrue} and add and subtract
	$\propagatorFPRV(\outputsTruthRVTrue)$ to obtain
\begin{equation}
	\outputsTruthRVTrue- \outputsRVTrue =
	\propagatorFPTruthRV(\outputsTruthRVTrue) - \propagatorFPRV(\outputsTruthRVTrue)
	+ \propagatorFPRV(\outputsTruthRVTrue) -\propagatorFPRV(\outputsRVTrue)
	.
\end{equation}
Applying the triangle inequality and Lipschitz continuity of
	$\propagatorFPRV$ gives
\begin{equation}
	\norm{\outputsTruthRVTrue- \outputsRVTrue} \leq
\norm{\propagatorFPTruthRV(\outputsTruthRVTrue) - \propagatorFPRV(\outputsTruthRVTrue)}
+
	\lipschitz{\propagatorFPRV}\norm{\outputsTruthRVTrue- \outputsRVTrue}.
\end{equation}
Finally, using $\lipschitz{\propagatorFPRV}<1$ and
	$\propagatorFPTruthRV(\outputsTruthRVTrue) = \outputsTruthRVTrue$ (from
	Eq.~\eqref{eq:FPtrue}) yields the desired result.
\end{proof}
We note that because
the mapping
$
\outputsRVDummy\mapsto\outputsRVDummy - \propagatorFPRV(\outputsRVDummy)
$
corresponds to the residual of the fixed-point system \eqref{eq:FP}
and the (generally unknown) truth solution $\outputsTruthRVTrue$ appears in the bound,
inequality \eqref{eq:aprioribound} can be considered a residual-based
\textit{a priori} error bound.

\begin{proposition}[\textit{A posteriori} error bound for $\norm{\outputsTruthRVTrue- \outputsRVTrue}$] \label{prop:apost}
	If Assumption \ref{ass:lipschitzTruth} holds, then
	\begin{equation} \label{eq:aposterioribound}
	\norm{\outputsTruthRVTrue- \outputsRVTrue} \leq
	 \frac{1}{1-\lipschitz{\propagatorFPTruthRV}}\norm{\outputsRVTrue-\propagatorFPTruthRV(\outputsRVTrue)}.
\end{equation}
\end{proposition}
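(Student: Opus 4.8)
The plan is to mirror the argument used for the \textit{a priori} bound in Proposition~\ref{prop:apriori}, but now exploiting the contraction property of the \emph{truth} fixed-point operator $\propagatorFPTruthRV$ rather than that of $\propagatorFPRV$. The starting point is the truth fixed-point identity~\eqref{eq:FPtrue}, namely $\outputsTruthRVTrue = \propagatorFPTruthRV(\outputsTruthRVTrue)$, together with the fact that $\outputsRVTrue$ need \emph{not} be a fixed point of $\propagatorFPTruthRV$ (it is only a fixed point of $\propagatorFPRV$); the residual $\outputsRVTrue - \propagatorFPTruthRV(\outputsRVTrue)$ therefore measures how badly the approximate solution fails to solve the truth fixed-point problem, and it is exactly this quantity that appears on the right-hand side of~\eqref{eq:aposterioribound}.

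First I would write the error as
\begin{equation*}
\outputsTruthRVTrue - \outputsRVTrue
= \propagatorFPTruthRV(\outputsTruthRVTrue) - \outputsRVTrue
= \bigl[\propagatorFPTruthRV(\outputsTruthRVTrue) - \propagatorFPTruthRV(\outputsRVTrue)\bigr]
+ \bigl[\propagatorFPTruthRV(\outputsRVTrue) - \outputsRVTrue\bigr],
\end{equation*}
where in the first equality I used $\outputsTruthRVTrue = \propagatorFPTruthRV(\outputsTruthRVTrue)$ and in the second I added and subtracted $\propagatorFPTruthRV(\outputsRVTrue)$. Then I would apply the triangle inequality and invoke Assumption~\ref{ass:lipschitzTruth} on the first bracket to obtain
\begin{equation*}
\norm{\outputsTruthRVTrue - \outputsRVTrue}
\leq \lipschitz{\propagatorFPTruthRV}\,\norm{\outputsTruthRVTrue - \outputsRVTrue}
+ \norm{\outputsRVTrue - \propagatorFPTruthRV(\outputsRVTrue)}.
\end{equation*}

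Finally, since $\lipschitz{\propagatorFPTruthRV} < 1$ by Assumption~\ref{ass:lipschitzTruth}, the term $\lipschitz{\propagatorFPTruthRV}\norm{\outputsTruthRVTrue - \outputsRVTrue}$ can be moved to the left-hand side and the inequality divided by $1 - \lipschitz{\propagatorFPTruthRV} > 0$, yielding~\eqref{eq:aposterioribound}. There is no serious obstacle here: the only points requiring a little care are (i) making sure that it is the truth fixed-point equation~\eqref{eq:FPtrue}, not~\eqref{eq:FP}, that is used to eliminate $\propagatorFPTruthRV(\outputsTruthRVTrue)$, and (ii) noting that the bound is genuinely \textit{a posteriori} because the quantity $\outputsRVTrue - \propagatorFPTruthRV(\outputsRVTrue)$ involves only the computable approximate solution $\outputsRVTrue$ and one evaluation of the truth operator, in contrast to~\eqref{eq:aprioribound} which features the unknown $\outputsTruthRVTrue$. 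I would add a short remark making this contrast explicit, paralleling the comment following Proposition~\ref{prop:apriori}.
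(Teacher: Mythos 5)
Your argument is correct and is essentially identical to the paper's proof: both decompose the error as $[\propagatorFPTruthRV(\outputsTruthRVTrue)-\propagatorFPTruthRV(\outputsRVTrue)]+[\propagatorFPTruthRV(\outputsRVTrue)-\outputsRVTrue]$, apply the triangle inequality and the Lipschitz bound on the truth operator, and rearrange using $\lipschitz{\propagatorFPTruthRV}<1$. The only cosmetic difference is that you substitute $\propagatorFPRV(\outputsRVTrue)=\outputsRVTrue$ at the outset whereas the paper carries $\propagatorFPRV(\outputsRVTrue)$ through the decomposition and substitutes at the end.
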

\begin{proof}
	We subtract Eq.~\eqref{eq:FP} from \eqref{eq:FPtrue} and add and subtract
	$\propagatorFPTruthRV(\outputsRVTrue)$ to obtain
\begin{equation}
	\outputsTruthRVTrue- \outputsRVTrue =
	\propagatorFPTruthRV(\outputsTruthRVTrue) - \propagatorFPTruthRV(\outputsRVTrue)
	+ \propagatorFPTruthRV(\outputsRVTrue) -\propagatorFPRV(\outputsRVTrue)
	.
\end{equation}
As before, applying the triangle inequality and Lipschitz continuity of
	$\propagatorFPTruthRV$ gives
\begin{equation}
	\norm{\outputsTruthRVTrue- \outputsRVTrue} \leq
	\lipschitz{\propagatorFPTruthRV}\norm{\outputsTruthRVTrue- \outputsRVTrue}
	+ \norm{\propagatorFPTruthRV(\outputsRVTrue)
	-\propagatorFPRV(\outputsRVTrue)}
	.
\end{equation}
Now, using $\lipschitz{\propagatorFPTruthRV}<1$ and
	$\propagatorFPRV(\outputsRVTrue) = \outputsRVTrue$ (from
	Eq.~\eqref{eq:FP}) yields the desired result.
\end{proof}
We can interpret this result similarly to Proposition \ref{prop:apriori}.
In particular, because
the mapping
$
\outputsRVDummy\mapsto\outputsRVDummy - \propagatorFPTruthRV(\outputsRVDummy)
$ corresponds to the residual of the truth fixed-point system \eqref{eq:FPtrue}
and the computed solution $\outputsRVTrue$ appears in the bound,
inequality \eqref{eq:aposterioribound} can be considered a residual-based
\textit{a posteriori} error bound.
The right-hand side norm corresponds to the difference between
the computed solution $\outputsRVTrue$ and the result of applying one iteration with
the truth fixed-point operator $\propagatorFPTruthRV$ to the computed solution $\outputsRVTrue$.
Thus, the right-hand side can be practically computed (resp.\
bounded from above) if the
Lipschitz constant can be computed (resp.\ bounded from above) by applying the
truth fixed-point operator to the computed solution.

\subsection{In-plane error analysis}\label{sec:errorProj}

We now consider the particular case where the uncertainty-propagation operator
$\propagatorRV$ restricts solutions to lie in a subspace of the space
considered by the truth uncertainty-propagation operator $\propagatorTruthRV$.
Then, we perform analysis that quantifies the difference between the computed
solution $\outputsRVTrue$ and the orthogonal projection of the truth solution
$\outputsTruthRVTrue$ onto the considered subspace. This is often referred to
as the ``in-plane error'', as it represents how close the computed solution is
to the orthogonal projection of the truth solution onto the subspace (i.e., ``plane'') of
interest.

We begin by introducing the following assumption, which will be employed in
all results within Section \ref{sec:errorProj}:
\begin{Assumption}[resume=assumption]
\item\label{ass:nested}
The component uncertainty-propagation operators satisfy
	$\imagePropFArg{i}\subseteq\imageoutputsRVArg{i}$, while the truth
	component uncertainty-propagation operators satisfy
	$\imagePropFTruthArg{i}\subseteq\imageoutputsTruthRVArg{i}$ with
$\imageoutputsRVArg{i}$ and
$\imageoutputsTruthRVArg{i}$
	linear subspaces of $\RVspaceDim{\noutputsRVArg{i}}$ satisfying
$\imageoutputsRVArg{i}\subseteq\imageoutputsTruthRVArg{i}\subseteq\RVspaceDim{\noutputsRVArg{i}}$
for
$i=1,\ldots,\nsubsystems$.
\end{Assumption}

We note that under Assumption \ref{ass:nested}, it follows trivially that the
full-system uncertainty-propagation operator satisfies
$\imagePropF\subseteq\imageProp\equiv\imageoutputsRVArg{1}\times\cdots\times\imageoutputsRVArg{\nsubsystems}$,
while the truth full-system uncertainty-propagation operator satisfies
$\imagePropFTruth\subseteq\imagePropTruth\equiv\imageoutputsTruthRVArg{1}\times\cdots\times\imageoutputsTruthRVArg{\nsubsystems}
$ with $\imagePropF$ and $\imagePropFTruth$ linear subspaces of
$\RVspaceDim{\noutputsRV}$ satisfying
$\imageProp\subseteq\imagePropTruth\subseteq\RVspaceDim{\noutputsRV}$.

\begin{remark}[Polynomial-chaos expansions satisfy Assumption
	\ref{ass:nested}]\label{rem:PCEAss3}
	Assumption \ref{ass:nested} holds if $\propagatorRVi$,
	$i=1,\ldots,\nsubsystems$ associate with computing low-order
	polynomial-chaos outputs and $\propagatorTruthRVi$,
	$i=1,\ldots,\nsubsystems$ associate with computing high-order
	polynomial-chaos outputs.  In this case,
	$\imageoutputsRVi=
	\{
\sum_{\multiindex\in\multiindexSetLow}\PCEbasisArgs{\multiindex}{\underlyingRV}\outputsPCEcoeffsArg{i}{\multiindex}\,|\,\outputsPCEcoeffsArg{i}{\multiindex}\in\RR{\noutputsRVi},\
	\multiindex\in\multiindexSetLow
\}\subseteq
	\RVspaceDim{\noutputsRVi}
	$, $i=1,\ldots,\nsubsystems$  with
$\multiindexSetLow \defeq
	\{\multiindex\in\natnoZero{\stochasticDim}\,|\,\|\multiindex\|_1\leq
	p_\text{low}\}$,
	while
$\imageoutputsTruthRVi=
	 \{
\sum_{\multiindex\in\multiindexSetHigh}\PCEbasisArgs{\multiindex}{\underlyingRV}\outputsPCEcoeffsArg{i}{\multiindex}\,|\,\outputsPCEcoeffsArg{i}{\multiindex}\in\RR{\noutputsTruthRVi},\
	\multiindex\in\multiindexSetHigh
\}\subseteq
	\RVspaceDim{\noutputsTruthRVi}
	$, $i=1,\ldots,\nsubsystems$ with
$\multiindexSetHigh \defeq
	\{\multiindex\in\natnoZero{\stochasticDim}\,|\,\|\multiindex\|_1\leq
	p_\text{high}\}$,
and $p_\text{high}\geq p_\text{low}$.
The conditions
	of Assumption \ref{ass:nested} can be trivially verified in this case.
\label{rem:pcnested}
\end{remark}

Under Assumption \ref{ass:nested}, the
orthogonal projector $\projImageArg{i}$ onto $\imageoutputsRVArg{i}$, $i=1,\ldots,\nsubsystems$
is a linear operator and
satisfies
\begin{equation}
	\projImageArg{i}\outputsRVDummyArg{i}=
	\underset{\tilde\outputsRVDummy\in\imageoutputsRVArg{i}}{\arg\min}\|
	\outputsRVDummyArg{i}- \tilde \outputsRVDummy
	\|,
\end{equation}
while the
orthogonal projector $\projImage$ onto $\imageProp$ is a linear operator and
satisfies
\begin{equation}
	\projImage\outputsRVDummy=
	\underset{\tilde\outputsRVDummy\in\imageoutputsRV}{\arg\min}\|
	\outputsRVDummy- \tilde \outputsRVDummy
	\|
\end{equation}
with
$
	\projImage \equiv
	\sum_{i=1}^{\nsubsystems}[\sampleoutputsRVArg{\permutationArg{i}}]^T\projImageArg{i}\sampleoutputsRVArg{\permutationArg{i}}$.
In this case, the error can be decomposed into orthogonal \textit{in-plane}
and \textit{out-of-plane} components
\begin{equation}
	\error =
	\underbrace{\errorOutOfPlane}_\text{out-of-plane error} +
	\underbrace{\errorInPlane}_\text{in-plane error}
	\end{equation}
	with
$\norm{\error}^2 = \norm{\errorOutOfPlane}^2 + \norm{\errorInPlane}^2$;
the out-of-plane error satisfies $\errorOutOfPlane\in \imageOrthProp$, where
$\imageOrthProp$ denotes the orthogonal complement of $\imageProp$
	in $\RVspaceDim{\noutputsRV}$; and the in-plane error satisfies
	$\errorInPlane\in \imageProp$. Because the out-of-plane error is determined
	completely from the truth solution $\outputsTruthRVTrue$ and the subspace
$\imageProp$, the in-plane error is the error component that is informative
of the accuracy of the computed solution $\outputsRVTrue$ given the
truth solution $\outputsTruthRVTrue$ and the considered subspace
$\imageProp$.

Before stating error bounds, we first derive conditions under which the
in-plane error is zero, which implies that the computed
solution satisfies $\outputsRVTrue=\projImageNo\outputsTruthRVTrue$ and thus
can be considered the optimal approximation of $\outputsTruthRVTrue$ in
$\imageProp$.  This result relies on the following assumptions:
\begin{Assumption}[resume=assumption]
\item\label{ass:specialForm}
	Assumption \ref{ass:nested} holds and the component uncertainty-propagation operators satisfy
		\begin{equation}\label{eq:requiredFormArg}
			\propagatorTruthRVArg{i}(\sampleinputsendoRVArg{i}\outputsToInputsRV\outputsRVDummy) = \propagatorRVArg{i}(\sampleinputsendoRVArg{i}\outputsToInputsRV\projImage{\outputsRVDummy})
			+
			\propagatorTruthRVPerpArg{i}(\sampleinputsendoRVArg{i}\outputsToInputsRV\outputsRVDummy),\quad\forall
			\outputsRVDummy\in\RVspaceDim{\noutputsRV}
		\end{equation}
		for $i=1,\ldots,\nsubsystems$.
\item \label{ass:uniqueFP} There is a unique solution $\outputsRVTrue$
	to the fixed-point system \eqref{eq:globalResRV}.
\end{Assumption}
It is straightforward to show that Assumption~\ref{ass:specialForm} leads to a
truth full-system uncertainty-propagation operator of the form
		\begin{equation}\label{eq:requiredForm}
		\propagatorTruthRV:\outputsRVDummy\mapsto\propagatorRV(\projImage{\outputsRVDummy})
	+ \propagatorTruthRVPerp(\outputsRVDummy),
		\end{equation}
		where
\begin{equation}
\propagatorTruthRVPerp:\outputsRV \mapsto
	\sum_{i=1}^\nsubsystems[\sampleoutputsRVArg{i}]^T
	\propagatorTruthRVPerpArg{i}(\sampleinputsendoRVArg{i}\outputsToInputsRV\outputsRVDummy)
\end{equation}
		with $\propagatorTruthRVPerp:\RVspaceDim{\noutputsRV}\rightarrow\RVspaceDim{\noutputsRV}$
	 and
	$\image{\propagatorTruthRVPerp}
	\subseteq
	\imageOrthProp\equiv\imageOrthPropArg{1}\times\cdots\times\imageOrthPropArg{\nsubsystems}$.

Note that Assumption \ref{ass:specialForm} is equivalent to assuming the
component uncertainty-propagation operators to satisfy
		$
			\projImageArg{i}\propagatorTruthRVArg{i}(\sampleinputsendoRVArg{i}\outputsToInputsRV\outputsRVDummy)
			=
			\propagatorRVArg{i}(\sampleinputsendoRVArg{i}\outputsToInputsRV\projImage{\outputsRVDummy})$,
			$\forall
			\outputsRVDummy\in\RVspaceDim{\noutputsRV}$,
which can be obtained
by premultiplying Eq.~\eqref{eq:requiredFormArg} by $\projImageArg{i}$.


\begin{remark}[Polynomial-chaos expansions satisfy Assumption
	\ref{ass:specialForm} for linear problems]\label{rem:PCELHSzero} Assumption
	\ref{ass:specialForm} holds if $\propagatorRVi$, $i=1,\ldots,\nsubsystems$
	are linear operators that project the outputs onto the space spanned by a
	low-order PCE basis and $\propagatorTruthRVi$, $i=1,\ldots,\nsubsystems$ are
	the same linear operators, but project the outputs onto the space spanned by
	a high-order PCE basis.

To demonstrate this, we employ the second moment as the norm-squared on the vector space
$\RVspaceDim{}$, i.e.,
	$\|\generalRV\| \defeq \sqrt{\mathbb{E}[\generalRV^2]} \equiv
		\sqrt{\int_\samplespace \generalRV^2 \mathrm{d}\probabilityOp(\theta)}
		$.
	Because any finite-variance random variable $\generalRV\in\RVspaceDim{}$ can
	be represented as a convergent PCE expansion, we can express such random
	variables as
	$\generalRV=\sum_{\multiindex\in\multiindexSetInf}\PCEbasisArgs{\multiindex}{\underlyingRV}\generalPCECoeffsArg{\multiindex}
	$
with 
$
\generalPCECoeffsArg{\multiindex} =
	\mathbb{E}[\generalRV\PCEbasisArg{\multiindex}]/\|\PCEbasisArg{\multiindex}\|^2
$, in which case the second moment is
	merely the weighted Euclidean norm of the PCE coefficients, i.e.,
	$\|\generalRV\| =
	\sqrt{{\sum_{\multiindex\in\multiindexSetInf}\generalPCECoeffsArg{\multiindex}^2}
	||\PCEbasisArg{\multiindex}||^2}$.

	In this case, the output random variables associated with the
	uncertainty-propagation operators $\propagatorRVi$ and $\propagatorTruthRVi$
	are
	\begin{equation}
		\outputsRVi
	=
		\sum_{\multiindex\in\multiindexSetLow}\PCEbasisArgs{\multiindex}{\underlyingRV}\outputsPCEcoeffsArg{i}{\multiindex}\qquad\text{and}\qquad
\outputsTruthRVi
	=\sum_{\multiindex\in\multiindexSetHigh}\PCEbasisArgs{\multiindex}{\underlyingRV}\outputsTruthPCEcoeffsArg{i}{\multiindex},
	\end{equation}
	respectively,
	for $i=1,\ldots,\nsubsystems$,
	with $\multiindexSetLow$ and $\multiindexSetHigh$ defined in Remark
	\ref{rem:PCEAss3}. Note in particular that we assume the same total-degree
	truncation is employed for the output random variables of all components. Given the choice the norm,
	any vector of finite-variance random variables
	$\generalRVVec\in\RVspaceDim{\noutputsRVi}$ can be expressed
	as
$\generalRVVec=\sum_{\multiindex\in\multiindexSetInf}\PCEbasisArgs{\multiindex}{\underlyingRV}\generalVecPCECoeffsArg{\multiindex}
	$ with its projection satisfying
\begin{equation}
\projImageArg{i}\generalRVVec
	=\sum_{\multiindex\in\multiindexSetLow}\PCEbasisArgs{\multiindex}{\underlyingRV}\generalVecPCECoeffsArg{\multiindex},\qquad
\projImageTruthArg{i}\generalRVVec
	=\sum_{\multiindex\in\multiindexSetHigh}\PCEbasisArgs{\multiindex}{\underlyingRV}\generalVecPCECoeffsArg{\multiindex}
\end{equation}
for $i=1,\ldots,\nsubsystems$,
where we have introduced
the
orthogonal projector $\projImageTruthArg{i}$ onto $\imageoutputsTruthRVArg{i}$, $i=1,\ldots,\nsubsystems$
as the linear operator satisfying
\begin{equation}
	\projImageTruthArg{i}\outputsRVDummyArg{i}=
	\underset{\tilde\outputsRVDummy\in\imageoutputsTruthRVArg{i}}{\arg\min}\|
	\outputsRVDummyArg{i}- \tilde \outputsRVDummy
	\|.
\end{equation}

Thus, in this case, assuming the endogenous-input random variables
	$\inputsendoRVArg{i}$ have finite variance such that
	$\inputsendoRVArg{i}=\sum_{\multiindex\in\multiindexSetInf}\PCEbasisArgs{\multiindex}{\underlyingRV}\inputsendoPCEcoeffsArg{i}{\multiindex}
	$, we have
	\begin{align}
		\propagatorRVArg{i} (\inputsendoRVArg{i})&=
		\projImageArg{i}
		\LinearTruthMati
		\inputsendoRVArg{i} =
\projImageArg{i}
\sum_{\multiindex\in\multiindexSetInf}\PCEbasisArgs{\multiindex}{\underlyingRV}
		\LinearTruthMati
		\inputsendoPCEcoeffsArg{i}{\multiindex}
		=
\sum_{\multiindex\in\multiindexSetLow}\PCEbasisArgs{\multiindex}{\underlyingRV}
		\LinearTruthMati
		\inputsendoPCEcoeffsArg{i}{\multiindex}
= \propagatorRVArg{i} (\sampleinputsendoRVArg{i}\outputsToInputsRV\projImage{\outputsRVDummy})
		\\
	\begin{split}
		\propagatorTruthRVArg{i} (\inputsendoRVArg{i})&=
		\projImageTruthArg{i}
		\LinearTruthMati
		\inputsendoRVArg{i} =
\projImageTruthArg{i}
\sum_{\multiindex\in\multiindexSetInf}\PCEbasisArgs{\multiindex}{\underlyingRV}
		\LinearTruthMati
		\inputsendoPCEcoeffsArg{i}{\multiindex}
		=
\sum_{\multiindex\in\multiindexSetHigh}\PCEbasisArgs{\multiindex}{\underlyingRV}
		\LinearTruthMati
		\inputsendoPCEcoeffsArg{i}{\multiindex}\\
		 &=
		\underbrace{\sum_{\multiindex\in\multiindexSetLow}\PCEbasisArgs{\multiindex}{\underlyingRV}
		\LinearTruthMati
		\inputsendoPCEcoeffsArg{i}{\multiindex}}_{\propagatorRVArg{i} (\sampleinputsendoRVArg{i}\outputsToInputsRV\projImage{\outputsRVDummy})}
		 +
		\underbrace{\sum_{\multiindex\in\multiindexSetHigh\setminus\multiindexSetLow}\PCEbasisArgs{\multiindex}{\underlyingRV}
		\LinearTruthMati
		\inputsendoPCEcoeffsArg{i}{\multiindex}}_{\propagatorTruthRVPerpArg{i}(\inputsendoRVi)},
	\end{split}
	\end{align}
where $\LinearTruthMati \in\RR{\noutputsRVArg{i}\times\ninputsendoRVArg{i}}$,
$i=1,\ldots,\nsubsystems$ denote matrices defining the linear
uncertainty-propagation operators.

This decoupling implies that higher-order terms of $\outputsRVDummyArg{i}$ do
not affect lower-order terms of
$\propagatorTruthRVArg{i}(\outputsRVDummyArg{i})$.  We note that this is
similar to \cite[Theorem 1]{chen2013flexible}.
\end{remark}

We now derive conditions under which the in-plane error is zero.
\begin{proposition}[Conditions for zero in-plane error]\label{cor:LHSzero}
	If Assumptions \ref{ass:specialForm} and \ref{ass:uniqueFP} hold, then
	\begin{equation} \label{eq:desiredZeroError}
\outputsRVTrue=\projImage{\outputsTruthRVTrue},
	  \end{equation}
		and thus the in-plane error is zero.
\end{proposition}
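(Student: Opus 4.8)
The plan is to apply the orthogonal projector $\projImageNo$ onto $\imageProp$ to the truth fixed-point equation and use the special structure \eqref{eq:requiredForm} of $\propagatorTruthRV$ to show that $\projImage{\outputsTruthRVTrue}$ is itself a solution of the computed fixed-point system \eqref{eq:globalResRV}; the uniqueness hypothesis (Assumption \ref{ass:uniqueFP}) then forces $\outputsRVTrue = \projImage{\outputsTruthRVTrue}$, which is precisely the claimed identity \eqref{eq:desiredZeroError}.

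First I would record the two structural facts that do the work. Under Assumption \ref{ass:nested}, the computed full-system operator satisfies $\imagePropF\subseteq\imageProp$, and $\projImageNo$ is a linear orthogonal projector that acts as the identity on $\imageProp$ and annihilates its orthogonal complement $\imageOrthProp$. Under Assumption \ref{ass:specialForm}, the truth operator has the form \eqref{eq:requiredForm}, namely $\propagatorTruthRV:\outputsRVDummy\mapsto\propagatorRV(\projImage{\outputsRVDummy})+\propagatorTruthRVPerp(\outputsRVDummy)$ with $\image{\propagatorTruthRVPerp}\subseteq\imageOrthProp$. Evaluating \eqref{eq:requiredForm} at the truth solution, which satisfies $\outputsTruthRVTrue=\propagatorTruthRV(\outputsTruthRVTrue)$ by \eqref{eq:globalResTruthRV}, gives
\begin{equation*}
\outputsTruthRVTrue = \propagatorRV(\projImage{\outputsTruthRVTrue}) + \propagatorTruthRVPerp(\outputsTruthRVTrue),
\end{equation*}
in which the first summand lies in $\imageProp$ (because $\image{\propagatorRV}\subseteq\imageProp$) and the second lies in $\imageOrthProp$. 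Applying $\projImageNo$ to both sides and using its linearity together with the fact that it fixes $\imageProp$ and kills $\imageOrthProp$ yields $\projImage{\outputsTruthRVTrue} = \propagatorRV(\projImage{\outputsTruthRVTrue})$.

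This last identity says precisely that $\projImage{\outputsTruthRVTrue}$ satisfies the computed fixed-point system $\outputsRVDummy=\propagatorRV(\outputsRVDummy)$, i.e.\ \eqref{eq:globalResRV} with the exogenous inputs suppressed. By Assumption \ref{ass:uniqueFP} this system has a unique solution, which is $\outputsRVTrue$; hence $\outputsRVTrue=\projImage{\outputsTruthRVTrue}$. Consequently $\error=\outputsTruthRVTrue-\outputsRVTrue=\outputsTruthRVTrue-\projImage{\outputsTruthRVTrue}=\projOrthImage{\outputsTruthRVTrue}\in\imageOrthProp$ is purely out-of-plane, so the in-plane error $\errorInPlane=\projImage{\outputsTruthRVTrue}-\outputsRVTrue$ vanishes.

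The only step requiring genuine care is the orthogonal-decomposition argument: one must check that the right-hand side of the displayed identity really is the $\imageProp\oplus\imageOrthProp$ splitting of $\outputsTruthRVTrue$, which rests on the image inclusions $\image{\propagatorRV}\subseteq\imageProp$ (from Assumption \ref{ass:nested}) and $\image{\propagatorTruthRVPerp}\subseteq\imageOrthProp$ (the consequence of Assumption \ref{ass:specialForm} noted just after \eqref{eq:requiredForm}), together with linearity of $\projImageNo$. Everything else follows mechanically from the uniqueness of orthogonal projections and Assumption \ref{ass:uniqueFP}; in particular, no contraction or Lipschitz hypothesis is used for this result.
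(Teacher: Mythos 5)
Your proposal is correct and follows essentially the same route as the paper's proof: substitute the structural form \eqref{eq:requiredForm} into the truth fixed-point equation, apply the linear projector $\projImageNo$ using $\image{\propagatorRV}\subseteq\imageProp$ and $\image{\propagatorTruthRVPerp}\subseteq\imageOrthProp$ to obtain $\projImage{\outputsTruthRVTrue}=\propagatorRV(\projImage{\outputsTruthRVTrue})$, and invoke uniqueness (Assumption \ref{ass:uniqueFP}). Your explicit remarks that the first summand is fixed by $\projImageNo$ via Assumption \ref{ass:nested} and that no contraction hypothesis is needed are accurate refinements of what the paper leaves implicit.
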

\begin{proof}
	Substituting \eqref{eq:requiredForm}, which derives from Assumption
	\ref{ass:specialForm}, in
	the fixed-point system \eqref{eq:globalResTruthRV} yields
\begin{equation}\label{eq:globalResTruthRVrequiredForm}
	\outputsTruthRVTrue =
\propagatorRV(\projImage{\outputsTruthRVTrue})
	+ \propagatorTruthRVPerp(\outputsTruthRVTrue).
\end{equation}
	Applying the (linear) operator $\projImageNo$ to both sides
	of Eq.~\eqref{eq:globalResTruthRVrequiredForm} yields
\begin{equation}\label{eq:FPtrueRequiredFormProj}
	\projImage{\outputsTruthRVTrue} =
	\propagatorRV(\projImage{\outputsTruthRVTrue}),
\end{equation}
where we have used
	$\image{\propagatorTruthRVPerp} \subseteq
	\imageOrthProp$. Eq.~\eqref{eq:FPtrueRequiredFormProj} shows that
	$\projImage{\outputsTruthRVTrue} $ satisfies
$\resRV(\projImage{\outputsTruthRVTrue} ) = \zero$, and thus
	the desired result holds
	by Assumption \ref{ass:uniqueFP}.
\end{proof}

We now introduce
an \textit{a priori} error bound, whose proof follows similar steps to
that of Proposition \ref{prop:apriori}.
\begin{proposition}[\textit{A priori} in-plane error
	bound]\label{prop:alternativeApriori}
	If Assumptions \ref{ass:lipschitz} and \ref{ass:nested} hold, then the
	in-plane error can be bounded as
	\begin{equation} \label{eq:aprioriboundTwo}
		\norm{\projImage{\outputsTruthRVTrue}- \outputsRVTrue} \leq
		\frac{1}{1-\lipschitz{\propagatorFPRV}}\norm{\projImage{\outputsTruthRVTrue}-\propagatorFPRV(\projImage{\outputsTruthRVTrue})}.
\end{equation}
\end{proposition}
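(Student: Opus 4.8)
The plan is to follow the proof of Proposition~\ref{prop:apriori} almost verbatim, but to anchor the argument at the projected truth solution $\projImage{\outputsTruthRVTrue}$ instead of at $\outputsTruthRVTrue$ itself. Assumption~\ref{ass:nested} enters only to guarantee that the orthogonal projector $\projImageNo$ onto $\imageProp$ is a well-defined linear operator, so that the quantity $\projImage{\outputsTruthRVTrue}\in\imageProp\subseteq\RVspaceDim{\noutputsRV}$ makes sense and lies in the domain of $\propagatorFPRV$. In contrast to Proposition~\ref{prop:apriori}, I would not invoke the truth fixed-point relation~\eqref{eq:FPtrue}; the only fixed-point equation used is $\outputsRVTrue=\propagatorFPRV(\outputsRVTrue)$ from~\eqref{eq:FP}.

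Concretely, starting from $\outputsRVTrue=\propagatorFPRV(\outputsRVTrue)$ and adding and subtracting $\propagatorFPRV(\projImage{\outputsTruthRVTrue})$, I would write
\begin{equation*}
	\projImage{\outputsTruthRVTrue}-\outputsRVTrue
	= \bigl(\projImage{\outputsTruthRVTrue}-\propagatorFPRV(\projImage{\outputsTruthRVTrue})\bigr)
	+ \bigl(\propagatorFPRV(\projImage{\outputsTruthRVTrue})-\propagatorFPRV(\outputsRVTrue)\bigr).
\end{equation*}
Applying the triangle inequality and then Lipschitz continuity of $\propagatorFPRV$ (Assumption~\ref{ass:lipschitz}) to the second term yields
\begin{equation*}
	\norm{\projImage{\outputsTruthRVTrue}-\outputsRVTrue}
	\leq \norm{\projImage{\outputsTruthRVTrue}-\propagatorFPRV(\projImage{\outputsTruthRVTrue})}
	+ \lipschitz{\propagatorFPRV}\norm{\projImage{\outputsTruthRVTrue}-\outputsRVTrue},
\end{equation*}
and since $\lipschitz{\propagatorFPRV}<1$, moving the last term to the left-hand side and dividing by $1-\lipschitz{\propagatorFPRV}$ produces the claimed bound~\eqref{eq:aprioriboundTwo}.

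I do not expect any genuine obstacle here: the argument is strictly easier than Proposition~\ref{prop:apriori} because it does not require the truth fixed-point equation. The only point worth stating clearly is the conceptual one, that $\norm{\projImage{\outputsTruthRVTrue}-\propagatorFPRV(\projImage{\outputsTruthRVTrue})}$ is exactly the residual of the fixed-point system~\eqref{eq:FP} evaluated at $\projImage{\outputsTruthRVTrue}$ --- hence an \textit{a priori}, residual-based bound in the same spirit as Proposition~\ref{prop:apriori}. In fact the same computation shows the inequality holds with $\projImage{\outputsTruthRVTrue}$ replaced by an arbitrary element of $\RVspaceDim{\noutputsRV}$; the in-plane statement is simply the useful specialization of that observation to the orthogonal projection of the truth solution.
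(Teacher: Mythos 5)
Your proof is correct and follows essentially the same route as the paper's: the same add-and-subtract of $\propagatorFPRV(\projImage{\outputsTruthRVTrue})$, the triangle inequality, the Lipschitz bound from Assumption~\ref{ass:lipschitz}, and the rearrangement using $\lipschitz{\propagatorFPRV}<1$. The only difference is cosmetic --- the paper first rewrites $\projImage{\outputsTruthRVTrue}$ as $\projImage{\propagatorFPTruthRV(\outputsTruthRVTrue)}$ via Eq.~\eqref{eq:FPtrue} and linearity of $\projImageNo$ and then undoes that substitution at the end, so your observation that the truth fixed-point relation is not actually needed here is accurate.
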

\begin{proof}
	We subtract Eq.~\eqref{eq:FP} from
	$\projImagePar{\outputsTruthRVTrue -
	\propagatorFPTruthRV(\outputsTruthRVTrue)} = \zero
	$ (which holds from Eq.~\eqref{eq:FPtrue} and linearity of
	$\projImageNo$) and add and subtract
	$\propagatorFPRV(\projImage{\outputsTruthRVTrue})$ to obtain
\begin{equation}
	\projImage{\outputsTruthRVTrue}- \outputsRVTrue =
	\projImage{\propagatorFPTruthRV(\outputsTruthRVTrue)} -
	\propagatorFPRV(\projImage{\outputsTruthRVTrue})
	+ \propagatorFPRV(\projImage{\outputsTruthRVTrue}) -\propagatorFPRV(\outputsRVTrue)
	.
\end{equation}
Applying the triangle inequality and Lipschitz continuity of
	$\propagatorFPRV$ gives
\begin{equation}
	\norm{\projImage{\outputsTruthRVTrue}- \outputsRVTrue} \leq
\norm{\projImage{\propagatorFPTruthRV(\outputsTruthRVTrue)} -
	\propagatorFPRV(\projImage{\outputsTruthRVTrue})}
	+
	\lipschitz{\propagatorFPRV}\norm{\projImage{\outputsTruthRVTrue}- \outputsRVTrue}
	.
\end{equation}
Finally, using $\lipschitz{\propagatorFPRV}<1$ and
	$\propagatorFPTruthRV(\outputsTruthRVTrue) = \outputsTruthRVTrue$ (from
	Eq.~\eqref{eq:FPtrue}) yields the desired result.
\end{proof}
We note that this bound is identical to the bound in Proposition
\ref{prop:apriori}, with $\projImage{\outputsTruthRVTrue}$ replacing
$\outputsTruthRVTrue$; this result was achievable through the introduction of
Assumption \ref{ass:nested}.


We now introduce the following assumption, which will be used to derive an
\textit{a posteriori} bound:
\begin{Assumption}[resume=assumption]
\item\label{ass:lipschitzTruthRestrict}
	Assumption \ref{ass:nested} holds and
	the projected truth fixed-point operator satisfies
\begin{equation} \label{eq:lipschitzTruthRestrict}
	\|\projImage{\propagatorFPTruthRV(\outputsRVDummyArg{1})}-
	\projImage{\propagatorFPTruthRV(\outputsRVDummyArg{2})}\|\leq
	\lipschitz{\projImage{\propagatorFPTruthRV}}\|\projImage{\outputsRVDummyArg{1}}-\projImage{\outputsRVDummyArg{2}}\|,\quad \forall
	\outputsRVDummyArg{1},\outputsRVDummyArg{2}\in\RVspaceDim{\noutputsRV}
\end{equation}
	with $\lipschitz{\projImage{\propagatorFPTruthRV}} < 1$.
\end{Assumption}

We now introduce the notion of an $\imageOrthProp$-invariant operator, which
will be used to demonstrate conditions under which Assumption \ref{ass:lipschitzTruthRestrict} holds.
\begin{definition}[$\imageOrthProp$-invariant truth fixed-point
	operators]\label{def:inv}
We deem a truth fixed-point operator
$\propagatorFPTruthRV$ to be
$\imageOrthProp$-invariant  if it satisfies
	\begin{equation} \label{eq:requirement}
		\projImage{\propagatorFPTruthRV(\outputsRVDummy)} =
		\constant,\quad\forall\outputsRVDummy\in\imageOrthProp,
\end{equation}
	where $\constant\in\RVspaceDim{\noutputsRV}$ is a constant random
	vector.
\end{definition}
Intuitively, $\imageOrthProp$-invariant truth fixed-point operators preserve
the decomposition of $\RVspaceDim{\noutputsRV}$  into
$\imageProp$ and $\imageOrthProp$, as the output components of these operators in the
space $\imageProp$ are unaffected by input components in the space
$\imageOrthProp$.  $\imageOrthProp$-invariance is a more general notion than
that introduced in Assumption \ref{ass:specialForm}. We now demonstrate that
this type of operator is necessary to satisfy Assumptions
\ref{ass:specialForm} and \ref{ass:lipschitzTruthRestrict}.

\begin{proposition}[$\imageOrthProp$-invariance is a necessary condition for
	Assumptions \ref{ass:specialForm} and \ref{ass:lipschitzTruthRestrict}]
	\label{prop:inv}
	If (1) Assumption \ref{ass:specialForm} holds and either Jacobi or
	Gauss--Seidel iteration defines the truth fixed-point operator
	$\propagatorFPTruthRV$, or (2) Assumption \ref{ass:lipschitzTruthRestrict}
	holds, then $\propagatorFPTruthRV$ is an $\imageOrthProp$-invariant
	operator.
\end{proposition}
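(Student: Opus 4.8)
The plan is to verify Definition~\ref{def:inv} directly under each of the two listed hypotheses, i.e., to exhibit a fixed $\constant\in\RVspaceDim{\noutputsRV}$ with $\projImage{\propagatorFPTruthRV(\outputsRVDummy)}=\constant$ for every $\outputsRVDummy\in\imageOrthProp$. Since Assumption~\ref{ass:nested} is in force in both cases (it is part of Assumption~\ref{ass:specialForm} and of Assumption~\ref{ass:lipschitzTruthRestrict}), $\projImageNo$ is the well-defined linear orthogonal projector onto $\imageProp$; I would repeatedly use that it annihilates $\imageOrthProp$, that it acts blockwise so that $\sampleoutputsRVArg{i}\projImage{\outputsRVDummy}=\projImageArg{i}\sampleoutputsRVArg{i}\outputsRVDummy$, and that $\imageOrthProp=\imageOrthPropArg{1}\times\cdots\times\imageOrthPropArg{\nsubsystems}$, so $\sampleoutputsRVArg{i}\outputsRVDummy\in\imageOrthPropArg{i}$ whenever $\outputsRVDummy\in\imageOrthProp$. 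Hypothesis~(2) then gives the claim in one line: for $\outputsRVDummyArg{1},\outputsRVDummyArg{2}\in\imageOrthProp$ we have $\projImage{\outputsRVDummyArg{1}}=\projImage{\outputsRVDummyArg{2}}=\zero$, so \eqref{eq:lipschitzTruthRestrict} forces $\norm{\projImage{\propagatorFPTruthRV(\outputsRVDummyArg{1})}-\projImage{\propagatorFPTruthRV(\outputsRVDummyArg{2})}}=0$; hence $\projImage{\propagatorFPTruthRV(\cdot)}$ is constant on $\imageOrthProp$, and $\constant$ is that common value.

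For hypothesis~(1) I would use the structural consequence~\eqref{eq:requiredForm} of Assumption~\ref{ass:specialForm}, $\propagatorTruthRV(\outputsRVDummy)=\propagatorRV(\projImage{\outputsRVDummy})+\propagatorTruthRVPerp(\outputsRVDummy)$ with $\image{\propagatorTruthRVPerp}\subseteq\imageOrthProp$, together with its blockwise form $\projImageArg{i}\propagatorTruthRVArg{i}(\sampleinputsendoRVArg{i}\outputsToInputsRV\outputsRVDummy)=\propagatorRVArg{i}(\sampleinputsendoRVArg{i}\outputsToInputsRV\projImage{\outputsRVDummy})$ noted just after that assumption. When $\propagatorFPTruthRV$ is the truth Jacobi operator $\FPoperatorTruthRVJac(\cdot;\relaxationTruth):\outputsRVDummy\mapsto\relaxationTruth\propagatorTruthRV(\outputsRVDummy)+(1-\relaxationTruth)\outputsRVDummy$, I would apply $\projImageNo$ and restrict to $\outputsRVDummy\in\imageOrthProp$: the terms $(1-\relaxationTruth)\projImage{\outputsRVDummy}$ and $\relaxationTruth\projImage{\propagatorTruthRVPerp(\outputsRVDummy)}$ vanish, leaving $\projImage{\FPoperatorTruthRVJac(\outputsRVDummy;\relaxationTruth)}=\relaxationTruth\projImage{\propagatorRV(\zero)}$, a fixed element of $\RVspaceDim{\noutputsRV}$, which serves as $\constant$.

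When $\propagatorFPTruthRV$ is the truth Gauss--Seidel operator $\FPoperatorTruthRVGS(\cdot;\relaxationTruth,\permutationTupleTruth)$, built recursively block-by-block along the processing order given by $\permutationTupleTruth$ (with entries $\permutationArg{1},\ldots,\permutationArg{\nsubsystems}$), I would argue by induction on $i=1,\ldots,\nsubsystems$ that $\projImageArg{\permutationArg{i}}\sampleoutputsRVArg{\permutationArg{i}}\FPoperatorTruthRVGS(\outputsRVDummy)$ is independent of $\outputsRVDummy\in\imageOrthProp$. Apply $\projImageArg{\permutationArg{i}}$ to the $i$th block of the recursion: the relaxation term $(1-\relaxationTruth)\sampleoutputsRVArg{\permutationArg{i}}\outputsRVDummy$ vanishes because $\sampleoutputsRVArg{\permutationArg{i}}\outputsRVDummy\in\imageOrthPropArg{\permutationArg{i}}$, and the remaining term is $\relaxationTruth\,\projImageArg{\permutationArg{i}}\propagatorTruthRVArg{\permutationArg{i}}$ applied to $\sampleinputsendoRVArg{\permutationArg{i}}\outputsToInputsRV$ of the block-vector formed from blocks $\permutationArg{j}$ ($j<i$) of $\FPoperatorTruthRVGS(\outputsRVDummy)$ and blocks $\permutationArg{j}$ ($j>i$) of $\outputsRVDummy$. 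By the blockwise form of Assumption~\ref{ass:specialForm}, this equals $\relaxationTruth\,\propagatorRVArg{\permutationArg{i}}$ applied to $\sampleinputsendoRVArg{\permutationArg{i}}\outputsToInputsRV$ of the \emph{projection} of that block-vector, which retains only $\projImageArg{\permutationArg{j}}\sampleoutputsRVArg{\permutationArg{j}}\FPoperatorTruthRVGS(\outputsRVDummy)$ for $j<i$---the $j>i$ blocks, coming from $\outputsRVDummy$, are annihilated by their projectors---each constant by the inductive hypothesis; hence the $i$th projected block is independent of $\outputsRVDummy$. Summing, $\projImage{\FPoperatorTruthRVGS(\outputsRVDummy)}=\sum_{i=1}^{\nsubsystems}[\sampleoutputsRVArg{\permutationArg{i}}]^T\projImageArg{\permutationArg{i}}\sampleoutputsRVArg{\permutationArg{i}}\FPoperatorTruthRVGS(\outputsRVDummy)$ is independent of $\outputsRVDummy\in\imageOrthProp$, which gives $\constant$.

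I expect the Gauss--Seidel induction to be the only nontrivial step: it requires carefully tracking how the block-extraction matrices $\sampleoutputsRVArg{i}$, the adjacency matrix $\outputsToInputsRV$, and the projectors $\projImageArg{i}$ interact---in particular verifying the blockwise commutation of $\projImageNo$ and that the product structure of $\imageOrthProp$ lets each $\projImageArg{\permutationArg{i}}$ annihilate the corresponding block of any $\outputsRVDummy\in\imageOrthProp$---and keeping the already-updated blocks ($j<i$) and the not-yet-updated blocks ($j>i$) of the recursion properly separated. The Jacobi sub-case and hypothesis~(2) are essentially immediate once \eqref{eq:requiredForm} and \eqref{eq:lipschitzTruthRestrict}, respectively, are invoked.
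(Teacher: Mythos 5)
Your proposal is correct and follows essentially the same route as the paper: Case~(2) is the same one-line observation (you state it directly where the paper phrases it as a contradiction), and Case~(1) uses the decomposition \eqref{eq:requiredForm} with the same constant $\relaxationTruth\propagatorRV(\zero)$ for Jacobi and the same block-by-block induction for Gauss--Seidel that the paper sketches via Eq.~\eqref{eq:propFormInvariantGS}. Your write-up merely fills in the inductive details the paper leaves to the reader.
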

\begin{proof}
	\textbf{Case 1}.
If Assumption \ref{ass:specialForm} holds and Jacobi iteration is applied, then
		from definition \eqref{eq:fpJac}, the truth fixed-point operator becomes
	\begin{equation}\label{eq:propFormInvariant}
		\propagatorFPTruthRV:\outputsRVDummy\mapsto\underbrace{\relaxation\propagatorRV(\projImage\outputsRVDummy)
		+ (1-\relaxation)\projImage\outputsRVDummy
		}_{\in\imageProp} +
		\underbrace{\relaxation\propagatorTruthRVPerp(\outputsRVDummy) +
		(1-\relaxation)\projOrthImage\outputsRVDummy}_{\in\imageOrthProp},
	\end{equation}
where $\projOrthImageNo=\identityRV -
		\projImage{}$,
	from which condition \eqref{eq:requirement} can be trivially verified with
$\constant=
	\relaxation\propagatorRV(\zero)$.

		Alternatively, if Assumption \ref{ass:specialForm} holds and Gauss--Seidel
		iteration is applied, then
		from definition \eqref{eq:fpGS},
		the truth fixed-point operator becomes
 \begin{align} \label{eq:propFormInvariantGS}
 \begin{split}
	\sampleoutputsRVArg{\permutationArg{i}}
	 \propagatorFPTruthRV:\outputsRVDummy\mapsto
	 &\underbrace{
		 \relaxation
	 \propagatorRVArg{i}
	 \left(\sampleinputsendoRVArg{i}\outputsToInputsRV
		\sum_{j=1}^{i-1}
	[\sampleoutputsRVArg{\permutationArg{j}}]^T
	 \projImageArg{\permutationArg{j}}
	\sampleoutputsRVArg{\permutationArg{j}}
\propagatorFPTruthRV(\outputsRVDummy)
 +
	\sampleinputsendoRVArg{i}\outputsToInputsRV	\sum_{j=i+1}^{\nsubsystems}
	[\sampleoutputsRVArg{\permutationArg{j}}]^T
	 \projImageArg{\permutationArg{j}}
\sampleoutputsRVArg{\permutationArg{j}}
\outputsRVDummy
		\right)
	 +
	 (1-\relaxation)
	 \projImageArg{\permutationArg{i}}
	 \sampleoutputsRVArg{\permutationArg{i}}\outputsRVDummy}_{\in\imagePropArg{i}}\\
	 &
+
	 \underbrace{
	 \relaxation
	 \propagatorTruthRVPerpArg{i}
	\left(
		\sampleinputsendoRVArg{i}\outputsToInputsRV\sum_{j=1}^{i-1}
	[\sampleoutputsRVArg{\permutationArg{j}}]^T
	\sampleoutputsRVArg{\permutationArg{j}}
\propagatorFPTruthRV(\outputsRVDummy)
 +
	\sampleinputsendoRVArg{i}\outputsToInputsRV\sum_{j=i+1}^{\nsubsystems}
	[\sampleoutputsRVArg{\permutationArg{j}}]^T
\sampleoutputsRVArg{\permutationArg{j}}
\outputsRVDummy
		\right)
	 + (1-\relaxation)\projOrthImageArg{\permutationArg{i}}\sampleoutputsRVArg{\permutationArg{i}}\outputsRVDummy
	 }_{\in\imageOrthPropArg{i}}
	  \end{split}
	  \end{align}
		for $i=1,\ldots,\nsubsystems$,
		where
$\projOrthImageArg{i}=\identityRV -
		\projImageArg{i}$ and
		$\projOrthImage\equiv\projOrthImageArg{1}\times\cdots\times\projOrthImageArg{\nsubsystems}$,
from which condition \eqref{eq:requirement} can be verified by induction with
\begin{equation}
\sampleoutputsRVArg{\permutationArg{i}}\constant =
\relaxation
	 \propagatorRVArg{i}
\left(\sampleinputsendoRVArg{i}\outputsToInputsRV\projImage
		\sum_{j=1}^{i-1}
	[\sampleoutputsRVArg{\permutationArg{j}}]^T
\sampleoutputsRVArg{\permutationArg{j}}\constant
		\right)
\end{equation}
and recalling $
	\projImage \equiv
	\sum_{i=1}^{\nsubsystems}[\sampleoutputsRVArg{\permutationArg{i}}]^T\projImageArg{i}\sampleoutputsRVArg{\permutationArg{i}}$.

	\textbf{Case 2}.
	We provide a proof by contradiction. Given any
$\outputsRVDummyArg{1},\outputsRVDummyArg{2}
	\in\imageOrthProp\subseteq
	\RVspaceDim{\noutputsRV}$, we have
$\projImage{\outputsRVDummyArg{1}} =
	\projImage{\outputsRVDummyArg{2}} =
	\zero$
	such that the right-hand-side of inequality \eqref{eq:lipschitzTruthRestrict}
	is zero. If Eq.~\eqref{eq:requirement} does not hold, then there will exist
	some $\outputsRVDummyArg{1},\outputsRVDummyArg{2}
	\in\imageOrthProp$ for which
	$\|\projImage{\propagatorFPTruthRV(\outputsRVDummyArg{1})}-
	\projImage{\propagatorFPTruthRV(\outputsRVDummyArg{2})}\|>0$, which violates
	the inequality.
\end{proof}
Recall that Remark \ref{rem:PCELHSzero} demonstrated that Assumption
\ref{ass:specialForm} holds if $\propagatorTruthRV$ is a linear operator
associated with a polynomial-chaos expansion, and so these conditions also
imply $\imageOrthProp$-invariance of the truth fixed-point operator
$\propagatorFPTruthRV$ associated with either Jacobi or Gauss--Seidel
iteration.

We now derive an \textit{a posteriori} bound on the error that leverages
Assumption \ref{ass:lipschitzTruthRestrict}.
\begin{proposition}[\textit{A posteriori} in-plane error bound] \label{prop:alternativeApost}
	If Assumption \ref{ass:lipschitzTruthRestrict} holds, then the in-plane
	error can be bounded as
	\begin{equation}\label{eq:aposterioriInPlane}
	\norm{\projImage{\outputsTruthRVTrue} - \outputsRVTrue
	}\leq
	\frac{1}{1-\lipschitz{\projImage{\propagatorFPTruthRV}}}\norm{
\outputsRVTrue-
		\projImage{\propagatorFPTruthRV(\outputsRVTrue)}
		}.
\end{equation}
\end{proposition}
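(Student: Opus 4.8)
The plan is to mimic the proof of Proposition~\ref{prop:apost}, working throughout with projected quantities and exploiting Assumption~\ref{ass:nested}. First I would apply the linear projector $\projImageNo$ to the truth fixed-point identity~\eqref{eq:FPtrue}, obtaining $\projImage{\outputsTruthRVTrue} = \projImage{\propagatorFPTruthRV(\outputsTruthRVTrue)}$. I would then observe that $\projImage{\outputsRVTrue} = \outputsRVTrue$: by~\eqref{eq:globalResRV} the computed solution satisfies $\outputsRVTrue = \propagatorRV(\outputsRVTrue) \in \imagePropF \subseteq \imageProp$, where the inclusion follows from Assumption~\ref{ass:nested} (which is part of Assumption~\ref{ass:lipschitzTruthRestrict}), and $\projImageNo$ fixes every element of $\imageProp$.

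With these two facts in hand, I would add and subtract $\projImage{\propagatorFPTruthRV(\outputsRVTrue)}$ to write
\begin{equation*}
\projImage{\outputsTruthRVTrue} - \outputsRVTrue = \projImage{\propagatorFPTruthRV(\outputsTruthRVTrue)} - \projImage{\propagatorFPTruthRV(\outputsRVTrue)} + \projImage{\propagatorFPTruthRV(\outputsRVTrue)} - \outputsRVTrue.
\end{equation*}
Applying the triangle inequality and then the contractivity estimate~\eqref{eq:lipschitzTruthRestrict} of Assumption~\ref{ass:lipschitzTruthRestrict} to the first difference --- together with $\projImage{\outputsRVTrue} = \outputsRVTrue$, so that $\norm{\projImage{\outputsTruthRVTrue} - \projImage{\outputsRVTrue}} = \norm{\projImage{\outputsTruthRVTrue} - \outputsRVTrue}$ --- gives
\begin{equation*}
\norm{\projImage{\outputsTruthRVTrue} - \outputsRVTrue} \leq \lipschitz{\projImage{\propagatorFPTruthRV}}\,\norm{\projImage{\outputsTruthRVTrue} - \outputsRVTrue} + \norm{\outputsRVTrue - \projImage{\propagatorFPTruthRV(\outputsRVTrue)}}.
\end{equation*}
Since $\lipschitz{\projImage{\propagatorFPTruthRV}} < 1$, I would move the first right-hand term to the left and divide by $1 - \lipschitz{\projImage{\propagatorFPTruthRV}}$ to reach~\eqref{eq:aposterioriInPlane}.

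I do not anticipate a genuine difficulty: the computation is short and parallels Proposition~\ref{prop:apost} almost line for line. The one step that really uses Assumption~\ref{ass:nested} --- and hence the place to be careful --- is the identity $\projImage{\outputsRVTrue} = \outputsRVTrue$, which is precisely what lets the contractive term $\lipschitz{\projImage{\propagatorFPTruthRV}}\norm{\projImage{\outputsTruthRVTrue} - \projImage{\outputsRVTrue}}$ be rewritten in terms of the very quantity $\norm{\projImage{\outputsTruthRVTrue} - \outputsRVTrue}$ being bounded, so that it can be absorbed into the left-hand side; without it the argument would stall.
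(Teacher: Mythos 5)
Your proposal is correct and follows essentially the same route as the paper's proof: project the truth fixed-point identity, add and subtract $\projImage{\propagatorFPTruthRV(\outputsRVTrue)}$, apply the triangle inequality together with the contractivity estimate of Assumption~\ref{ass:lipschitzTruthRestrict} and the identity $\projImage{\outputsRVTrue}=\outputsRVTrue$, then absorb the contraction term. Your explicit justification of $\projImage{\outputsRVTrue}=\outputsRVTrue$ via Assumption~\ref{ass:nested} is a welcome detail that the paper only states implicitly.
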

\begin{proof}
	As in  Proposition \ref{prop:alternativeApriori}, we first
subtract Eq.~\eqref{eq:FP} from
	$\projImagePar{\outputsTruthRVTrue -
	\propagatorFPTruthRV(\outputsTruthRVTrue)} = \zero
	$;
	however, we add and subtract
	$\projImage{\propagatorFPTruthRV(\outputsRVTrue)}$ to obtain
\begin{equation}
	\projImage{\outputsTruthRVTrue}- \outputsRVTrue =
	\projImage{\propagatorFPTruthRV(\outputsTruthRVTrue)} -
	\projImage{\propagatorFPTruthRV(\outputsRVTrue)}
	+ \projImage{\propagatorFPTruthRV(\outputsRVTrue)} -\propagatorFPRV(\outputsRVTrue)
	.
\end{equation}
	Applying the triangle inequality, Assumption
	\ref{ass:lipschitzTruthRestrict}, and noting $\projImage{\outputsRVTrue}=\outputsRVTrue$ gives
\begin{equation}
	\norm{\projImage{\outputsTruthRVTrue}- \outputsRVTrue} \leq
	\lipschitz{\projImage{\propagatorFPTruthRV}}\norm{\projImage{\outputsTruthRVTrue}- \outputsRVTrue}
	+ \norm{\projImage{\propagatorFPTruthRV(\outputsRVTrue)} -\propagatorFPRV(\outputsRVTrue)}
	.
\end{equation}
	Finally, using $\lipschitz{\projImage{\propagatorFPTruthRV}}<1$ and
	$\propagatorFPRV(\outputsRVTrue) = \outputsRVTrue$ yields the
	desired result.
\end{proof}

Proposition \ref{cor:LHSzero} derived conditions for zero in-plane error by
proving conditions under which $\projImage{\outputsTruthRVTrue} $
is a fixed point of $\propagatorFPRV$. From Proposition
\ref{prop:alternativeApost}, we now derive conditions for zero in-plane error
by considering the case where
$\outputsRVTrue$ is a fixed point of
$\projImage{\propagatorFPTruthRV(\outputsRVTrue)}$, which makes the right-hand
side---and thus the left-hand side---of inequality \eqref{eq:aposterioriInPlane}
zero.

\begin{proposition}[Additional conditions for zero in-plane
	error]\label{cor:LHSzeroSecond}
	If Assumptions
	\ref{ass:specialForm} and
	\ref{ass:lipschitzTruthRestrict} hold, then
	\begin{equation} \label{eq:desiredZeroErrorSecond}
\outputsRVTrue=\projImage{\outputsTruthRVTrue},
	  \end{equation}
		and thus the in-plane error is zero.
\end{proposition}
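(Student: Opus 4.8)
The plan is to combine the \textit{a posteriori} in-plane error bound of Proposition~\ref{prop:alternativeApost} with the structural consequence of Assumption~\ref{ass:specialForm}. Since Assumption~\ref{ass:lipschitzTruthRestrict} presupposes Assumption~\ref{ass:nested}, Proposition~\ref{prop:alternativeApost} applies and gives
\begin{equation*}
	\norm{\projImage{\outputsTruthRVTrue} - \outputsRVTrue}\leq
	\frac{1}{1-\lipschitz{\projImage{\propagatorFPTruthRV}}}\norm{\outputsRVTrue - \projImage{\propagatorFPTruthRV(\outputsRVTrue)}}.
\end{equation*}
Because $\lipschitz{\projImage{\propagatorFPTruthRV}}<1$, it therefore suffices to show $\projImage{\propagatorFPTruthRV(\outputsRVTrue)} = \outputsRVTrue$, i.e.\ that the computed solution is a fixed point of the $\imageProp$-component of the truth fixed-point operator; then the right-hand side vanishes, $\outputsRVTrue = \projImage{\outputsTruthRVTrue}$, and the in-plane error $\projImage{\outputsTruthRVTrue}-\outputsRVTrue$ is zero.

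To establish $\projImage{\propagatorFPTruthRV(\outputsRVTrue)} = \outputsRVTrue$, I would first record two facts about $\outputsRVTrue$: since $\outputsRVTrue$ solves \eqref{eq:globalResRV}, we have $\propagatorRV(\outputsRVTrue)=\outputsRVTrue$, and moreover $\outputsRVTrue = \propagatorRV(\outputsRVTrue)\in\imagePropF\subseteq\imageProp$ by Assumption~\ref{ass:nested}, so $\projImage{\outputsRVTrue} = \outputsRVTrue$. Next I would invoke the explicit form of the truth fixed-point operator: by \eqref{eq:requiredForm} (which follows from Assumption~\ref{ass:specialForm}), $\propagatorTruthRV(\outputsRVDummy) = \propagatorRV(\projImage{\outputsRVDummy}) + \propagatorTruthRVPerp(\outputsRVDummy)$ with $\image{\propagatorTruthRVPerp}\subseteq\imageOrthProp$. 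Substituting this into the Jacobi operator \eqref{eq:fpJac} (resp.\ the Gauss--Seidel operator \eqref{eq:fpGS}) yields exactly the decomposition \eqref{eq:propFormInvariant} (resp.\ \eqref{eq:propFormInvariantGS}) established in the proof of Proposition~\ref{prop:inv}, from which the $\imageProp$-component of $\propagatorFPTruthRV(\outputsRVDummy)$ depends on $\outputsRVDummy$ only through $\projImage{\outputsRVDummy}$.

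Evaluating at $\outputsRVDummy=\outputsRVTrue$ and using $\projImage{\outputsRVTrue}=\outputsRVTrue$: in the Jacobi case \eqref{eq:propFormInvariant} gives $\projImage{\propagatorFPTruthRV(\outputsRVTrue)} = \relaxationTruth\,\propagatorRV(\outputsRVTrue) + (1-\relaxationTruth)\outputsRVTrue = \outputsRVTrue$, using $\propagatorRV(\outputsRVTrue)=\outputsRVTrue$. In the Gauss--Seidel case one argues by induction on $i=1,\ldots,\nsubsystems$ that the $\permutationArg{i}$-block of $\projImage{\propagatorFPTruthRV(\outputsRVTrue)}$ equals $\sampleoutputsRVArg{\permutationArg{i}}\outputsRVTrue$: the inductive hypothesis collapses the arguments of $\propagatorRVArg{\permutationArg{i}}$ in \eqref{eq:propFormInvariantGS} to $\sampleinputsendoRVArg{\permutationArg{i}}\outputsToInputsRV\outputsRVTrue$ (using $\sum_j[\sampleoutputsRVArg{\permutationArg{j}}]^T\sampleoutputsRVArg{\permutationArg{j}}=\identityRV$ together with the zero diagonal blocks of $\outputsToInputsRV$), so that block equals $\relaxationTruth\,\sampleoutputsRVArg{\permutationArg{i}}\propagatorRV(\outputsRVTrue) + (1-\relaxationTruth)\sampleoutputsRVArg{\permutationArg{i}}\outputsRVTrue = \sampleoutputsRVArg{\permutationArg{i}}\outputsRVTrue$. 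Either way $\projImage{\propagatorFPTruthRV(\outputsRVTrue)}=\outputsRVTrue$, the bound's right-hand side vanishes, and \eqref{eq:desiredZeroErrorSecond} follows. I expect the main obstacle to be the Gauss--Seidel induction and, more fundamentally, the fact that Assumption~\ref{ass:lipschitzTruthRestrict} alone does not pin down $\propagatorFPTruthRV$ enough to evaluate $\projImage{\propagatorFPTruthRV(\outputsRVTrue)}$; as elsewhere in Section~\ref{sec:errorProj}, one must restrict to the Jacobi and Gauss--Seidel truth fixed-point operators and reuse the bookkeeping from the proof of Proposition~\ref{prop:inv}.
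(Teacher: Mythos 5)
Your proposal is correct and follows essentially the same route as the paper's proof: both show $\projImage{\propagatorFPTruthRV(\outputsRVTrue)}=\outputsRVTrue$ by exploiting the decomposition of the truth fixed-point operator induced by Assumption \ref{ass:specialForm} (together with $\projImage{\outputsRVTrue}=\outputsRVTrue$), and then conclude via the \textit{a posteriori} in-plane bound of Proposition \ref{prop:alternativeApost}, whose right-hand side vanishes. The extra detail you supply---the explicit justification that $\outputsRVTrue\in\imageProp$ and the block-wise induction for Gauss--Seidel---is consistent with, and merely fills in, the bookkeeping the paper leaves implicit.
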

\begin{proof}
	Under Assumption \ref{ass:specialForm}, the truth fixed-point operator takes
	the form
		\begin{equation}\label{eq:requiredFormFPTwo}
		\propagatorFPTruthRV:\outputsRVDummy\mapsto\propagatorFPRV(\projImage\outputsRVDummy)
	+ \propagatorFPTruthRVPerp(\outputsRVDummy)
		\end{equation}
		with $\image{\propagatorFPTruthRVPerp}
	\subseteq \imageOrthProp$, where
\begin{equation}
\propagatorFPTruthRVPerp:\outputsRVDummy\mapsto\relaxation\propagatorTruthRVPerp(\outputsRVDummy) +
		(1-\relaxation)\projOrthImage\outputsRVDummy
\end{equation}
in the case of Jacobi iteration, while
\begin{equation}
\propagatorFPTruthRVPerp:\outputsRVDummy\mapsto
\relaxation
	 \propagatorTruthRVPerpArg{i}
	\left(
		\sampleinputsendoRVArg{i}\outputsToInputsRV\sum_{j=1}^{i-1}
	[\sampleoutputsRVArg{\permutationArg{j}}]^T
	\sampleoutputsRVArg{\permutationArg{j}}
\propagatorFPTruthRV(\outputsRVDummy)
 +
	\sampleinputsendoRVArg{i}\outputsToInputsRV\sum_{j=i+1}^{\nsubsystems}
	[\sampleoutputsRVArg{\permutationArg{j}}]^T
\sampleoutputsRVArg{\permutationArg{j}}
\outputsRVDummy
		\right)
	 + (1-\relaxation)\projOrthImageArg{\permutationArg{i}}\sampleoutputsRVArg{\permutationArg{i}}\outputsRVDummy
\end{equation}
in the case of Gauss--Seidel iteration.
	Applying projection $\projImage$ to \eqref{eq:FP} and adding
	$\projImage\propagatorFPTruthRVPerp(\outputsRVTrue)(=\zero)$ yields
\begin{equation} \label{eq:FPsecondary}
	 \projImage\propagatorFPRV(\outputsRVTrue) + \projImage\propagatorFPTruthRVPerp(\outputsRVTrue)-\projImage\outputsRVTrue  =
	\zero.
\end{equation}
	Then, using \eqref{eq:requiredFormFPTwo}
	with $\outputsRVTrue=\projImage\outputsRVTrue$ yields
\begin{equation} \label{eq:FPsecondaryTwo}
	\projImage\propagatorFPTruthRV(\outputsRVTrue)-\outputsRVTrue =
	\zero.
\end{equation}
	Thus, the right-hand side of inequality \eqref{eq:aposterioriInPlane}---which is valid under
	Assumption \ref{ass:lipschitzTruthRestrict}---is zero, which yields the desired result.
\end{proof}

\section{Numerical experiments}\label{sec:numericalExperiments}

We now assess the performance of the proposed NetUQ method on a
two-dimensional stationary nonlinear diffusion equation with uncertainties
arising in the diffusion coefficient and boundary conditions. Here, we
construct the network formulation by decomposing the physical domain into
(overlapping) subdomains, and associating each subdomain with a network
component.  Thus---for this problem---the NetUQ method is tantamount to an
overlapping domain decomposition method for uncertainty propagation. We first
define the global uncertainty-propagation problem in Section
\ref{sec:globUPprob}, and subsequently define the resulting component and network
uncertainty-propagation problems in Section \ref{sec:netUPproblem}. Then,
Sections \ref{sec:strong} and \ref{sec:weak} perform strong- and weak-scaling
studies, respectively.

\subsection{Global uncertainty-propagation problem}\label{sec:globUPprob}

We express the deterministic boundary value problem (BVP) as the partial
differential equation (adopted from Ref.~\cite{grepl2007efficient})
\begin{align}\label{eq:PDE}
\begin{split}
	- \frac{\partial^2 v}{\partial \spatialVarArg{1}\partial\spatialVarArg{2}} + \left(e^{\mu v} - 1 \right)  &= 10 \sin \left(2 \pi
	\spatialVarArg{1} \right) \sin \left(2 \pi x_2 \right) \ , \quad
	\spatialVar\equiv(\spatialVarArg{1},\spatialVarArg{2}) \in
	\domain = \left(0, 1\right)^2 \\
	v(x) &= v_\Gamma,\quad x\in\partial \domain.
\end{split}
\end{align}
To formulate the uncertainty-propagation problem, we assume the boundary condition
and diffusion coefficient are ``global'' input random variables, which we model
using polynomial-chaos expansions as
\begin{equation}
\label{eq:globalInputPCE}
	v_\Gamma  =\inputsexoRVGlobalArg{1} =
	\sum_{\multiindex\in\multiindexGlobalSetArgs{\inputsexoRV}}
	\PCEbasisArgs{\multiindex}{\underlyingRV}\PCEcoeffsGlobalArgs{\inputsexosymb}{
		1 }{\multiindex} , \qquad
\mu  =  \inputsexoRVGlobalArg{2}  =
	\sum_{\multiindex\in\multiindexGlobalSetArgs{\inputsexoRV}} \PCEbasisArgs{\multiindex}{\underlyingRV}\PCEcoeffsGlobalArgs{\inputsexosymb}{ 2 }{\multiindex}
\end{equation}
with
$\stochasticDim=2$ germ random variables
$\underlyingRV \equiv \left( \underlyingRVunivArg{1},\underlyingRVunivArg{2}
\right)\sim\normal{\zero,\identity}$. Note that the PCE coefficients
$\PCEcoeffsGlobalArgs{\inputsexosymb}{ 1 }{\multiindex}$ and
$\PCEcoeffsGlobalArgs{\inputsexosymb}{2 }{\multiindex}$,
$\multiindex\in\multiindexGlobalSetArgs{\inputsexoRV}$
completely characterize the
global input random variables $\inputsexoRVGlobalArg{1}\in\RVspaceDim{}$ and
$\inputsexoRVGlobalArg{2}\in\RVspaceDim{}$, respectively. We consider the case of
independent global input random variables such that
$\inputsexoRVGlobalArg{1}$ and $\inputsexoRVGlobalArg{2}$
 are
expressed strictly in terms of $\underlyingRVunivArg{1}$ and
$\underlyingRVunivArg{2}$, respectively.
As described in Remark \ref{rem:PCE}, we define the multi-index set
via total-degree truncation as $\multiindexGlobalSetArgs{\inputsexoRV} =
\{\multiindex\in\natnoZero{\stochasticDim}\,|\,\|\multiindex\|_1\leq p\}$ for
$p=3$ and employ Hermite polynomials such that
\begin{gather*}
\PCEbasisArgs{(0,0)}{\underlyingRV}= 1 ,\\
\PCEbasisArgs{(1,0)}{\underlyingRV}=\xi_1 ,\  \ \
\PCEbasisArgs{(0,1)}{\underlyingRV}=\xi_2  ,\ \\
\PCEbasisArgs{(2,0)}{\underlyingRV}=\xi_1^2 - 1 ,\ \ \
\PCEbasisArgs{(1,1)}{\underlyingRV}=\xi_1\xi_2 , \ \ \
\PCEbasisArgs{(0,2)}{\underlyingRV}=\xi_2^2 - 1 , \ \\
\PCEbasisArgs{(3,0)}{\underlyingRV}=\xi_1^3 - 3 \xi_1 , \ \
\PCEbasisArgs{(2,1)}{\underlyingRV}=\xi_2\left( \xi_1^2 - 1 \right)   ,\ \ \
\PCEbasisArgs{(1,2)}{\underlyingRV}=\xi_1 \left(\xi_2^2 - 1 \right)  , \ \ \
\PCEbasisArgs{(0,3)}{\underlyingRV}=\xi_2^3 - 3 \xi_2.
\end{gather*}

Table \ref{tab:PCEglobal} reports the polynomial-chaos coefficients used to
represent the two non-Gaussian, independent global input random variables.
Note that while this characterization yields independent global input
random variables, the NetUQ formulation supports dependent input random
variables.
 \begin{table}[ht]
 \centering
 \begin{tabular}{|c|c|c|c|c|c|c|c|c|c|c|}
  \hline
$i$ &
$\PCEcoeffsGlobalArgs{\inputsexosymb}{ i }{(0,0)}$&
$\PCEcoeffsGlobalArgs{\inputsexosymb}{ i }{(1,0)}$&
$\PCEcoeffsGlobalArgs{\inputsexosymb}{ i }{(0,1)}$&
$\PCEcoeffsGlobalArgs{\inputsexosymb}{ i }{(2,0)}$&
$\PCEcoeffsGlobalArgs{\inputsexosymb}{ i }{(1,1)}$&
$\PCEcoeffsGlobalArgs{\inputsexosymb}{ i }{(0,2)}$&
$\PCEcoeffsGlobalArgs{\inputsexosymb}{ i }{(3,0)}$&
$\PCEcoeffsGlobalArgs{\inputsexosymb}{ i }{(2,1)}$&
$\PCEcoeffsGlobalArgs{\inputsexosymb}{ i }{(1,2)}$&
$\PCEcoeffsGlobalArgs{\inputsexosymb}{ i }{(0,3)}$\\
  \hline
$1$ &
$1.0   $ &
$0.2   $ &
$0.0   $ &
$0.02   $ &
$0.0  $ &
$0.0   $ &
$0.002 $ &
$0.0   $ &
$0.0   $ &
$0.0   $ \\
2 &
$1.0$ &
$0.0$ &
$0.2$ &
$0.0$ &
$0.0$ &
$0.0$ &
$0.02$ &
$0.0$ &
$0.0$ &
$0.002$ \\
\hline
  \end{tabular}
  \caption{Polynomial-chaos coefficients for global input random variables.}
  \label{tab:PCEglobal}
  \end{table}

The global uncertainty-propagation problem is now: Given global input random
variables $\inputsexoRVGlobalArg{1}$ and $\inputsexoRVGlobalArg{2}$
characterized using polynomial-chaos expansions
\eqref{eq:globalInputPCE} with coefficients provided by Table
\ref{tab:PCEglobal}, compute the random field $\mathsf v:\domain\rightarrow\RVspaceDim{}$,
where $\mathsf v(x)$ is the random variable associated with
the variable $v(x)$ for $x\in\domain$ due to the randomness in the diffusion
coefficient and boundary condition. Solving this global
uncertainty-propagation with non-intrusive spectral projection (NISP)
\cite{lemaitre2010book,hosder2006non}
using a
16-point Gauss--Hermite quadrature rule (derived using a full tensor
product of the 1D rule with 4 points per dimension) yields a PCE representation for the random
field. Each of the 16 quadrature points yields one instance of the
global deterministic problem \eqref{eq:PDE}. We discretize this (nonlinear) PDE using
the finite-element method (FEM) with rectangular linear elements, and we solve the
resulting system of nonlinear equations using Newton's method.  The initial
guess for the Newton solver is the zero solution.

For simplicity, we truncate the polynomial-chaos expansion of the random field
at the same level as the global inputs such that
\begin{equation}
\mathsf v(x)  =
	\sum_{\multiindex\in\multiindexGlobalSetArgs{\inputsexoRV}}
	\PCEbasisArgs{\multiindex}{\underlyingRV}\PCEcoeffsGlobalFieldArgs{\mathsf
	v}{\multiindex}(x),\quad x\in\domain.
\end{equation}
Figure \ref{fig:2dheat_soln} plots the resulting PCE coefficients
$
\PCEcoeffsGlobalFieldArgs{\mathsf v}{\multiindex}(x)
$, $\multiindex\in\multiindexGlobalSetArgs{\inputsexoRV}$, $x\in\domain$.

\begin{figure}[htbp]
\centering
	\begin{subfigure}{0.45\textwidth}
		\centering
\includegraphics[width=0.6\textwidth]{./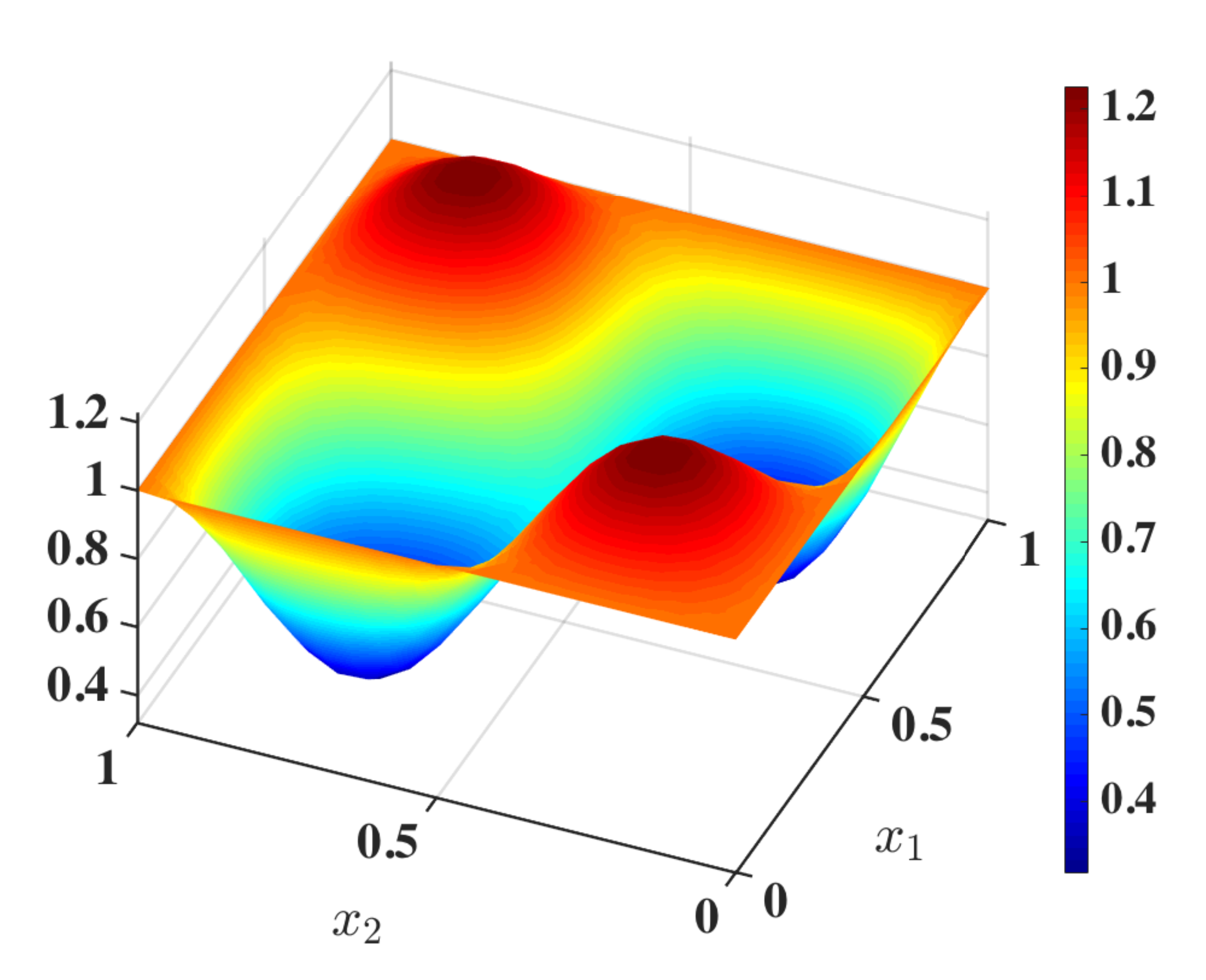}
		\caption{Output PCE coefficient
		$
		\PCEcoeffsGlobalFieldArgs{\mathsf v}{(0,0)}(x)
$
		}
	\end{subfigure}
	\begin{subfigure}{0.45\textwidth}
		\centering
\includegraphics[width=0.6\textwidth]{./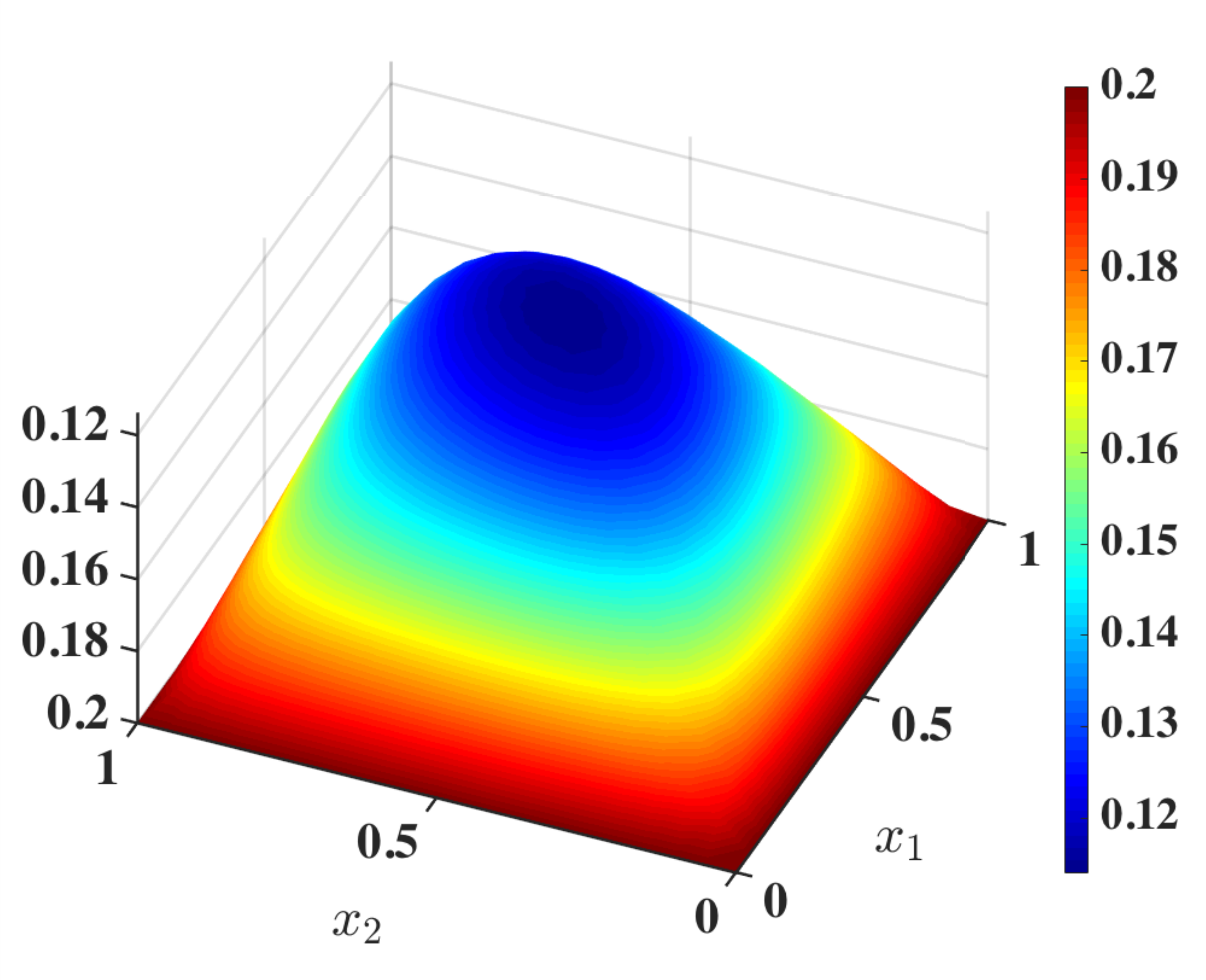}
		\caption{Output PCE coefficient
		$
		\PCEcoeffsGlobalFieldArgs{\mathsf v}{(1,0)}(x)
$
		}
	\end{subfigure}
	\begin{subfigure}{0.45\textwidth}
		\centering
\includegraphics[width=0.6\textwidth]{./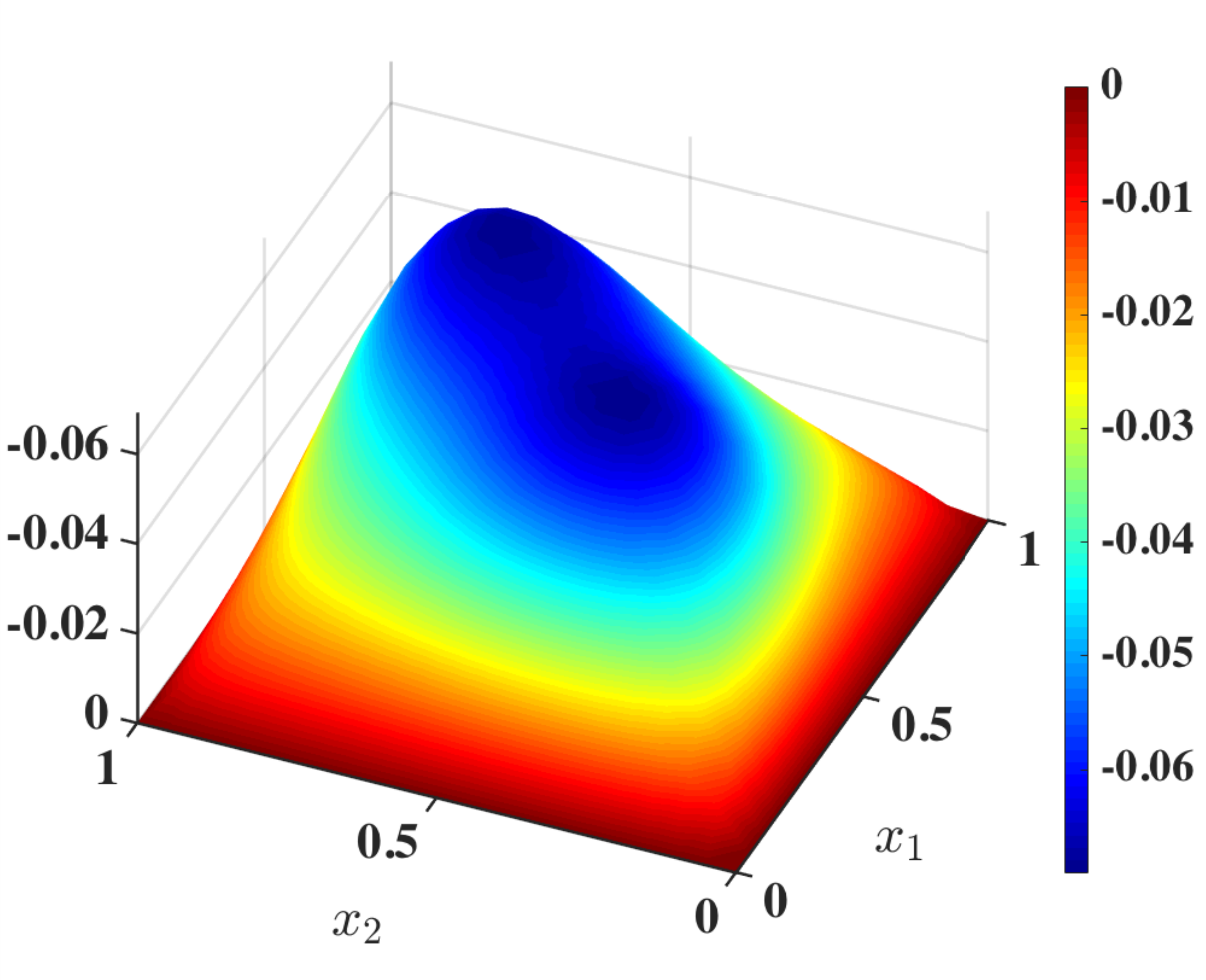}
		\caption{Output PCE coefficient $
		\PCEcoeffsGlobalFieldArgs{\mathsf v}{(0,1)}(x)
$}
	\end{subfigure}
	\begin{subfigure}{0.45\textwidth}
		\centering
\includegraphics[width=0.6\textwidth]{./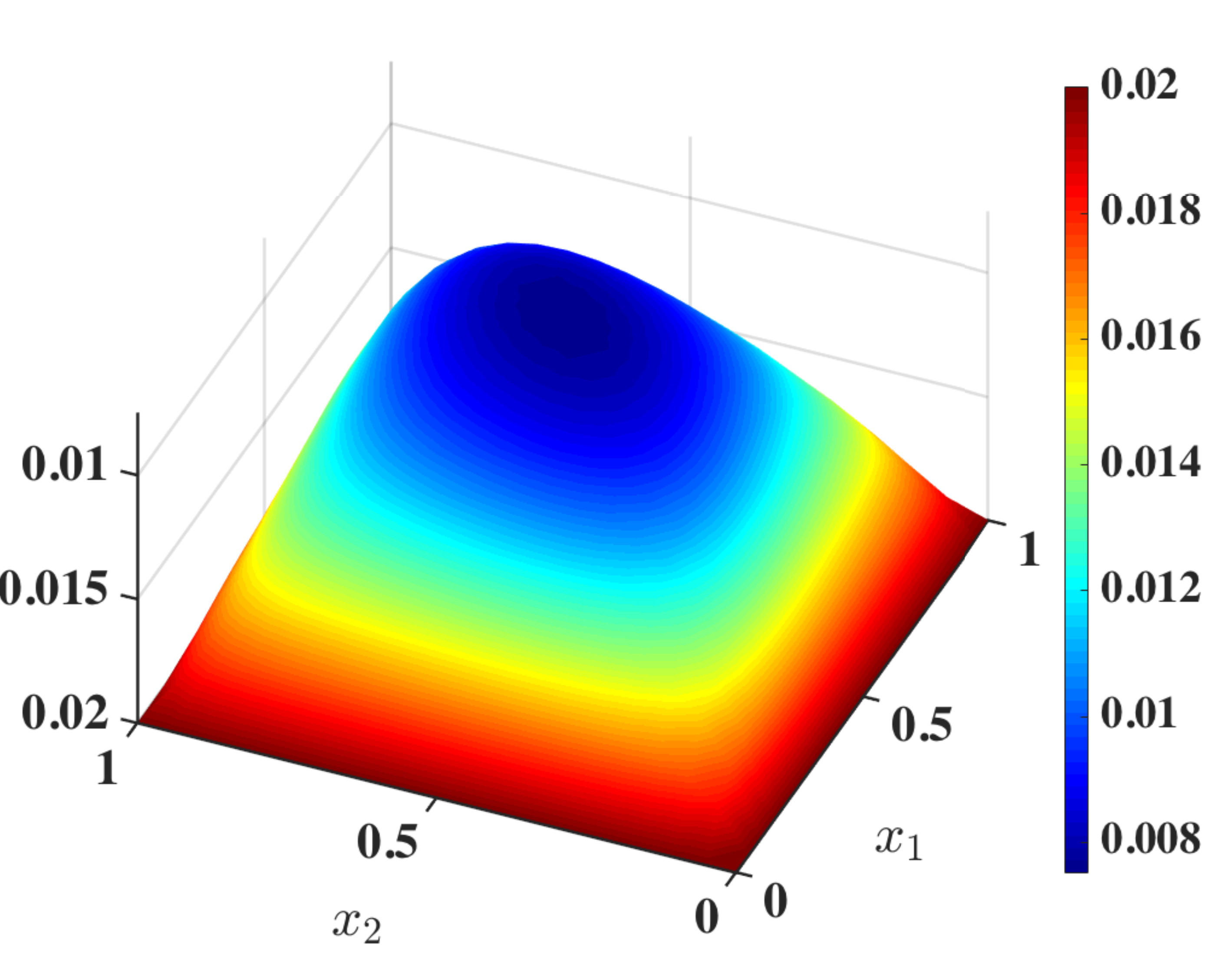}
		\caption{Output PCE coefficient $
		\PCEcoeffsGlobalFieldArgs{\mathsf v}{(2,0)}(x)
$}
	\end{subfigure}
	\begin{subfigure}{0.45\textwidth}
		\centering
\includegraphics[width=0.6\textwidth]{./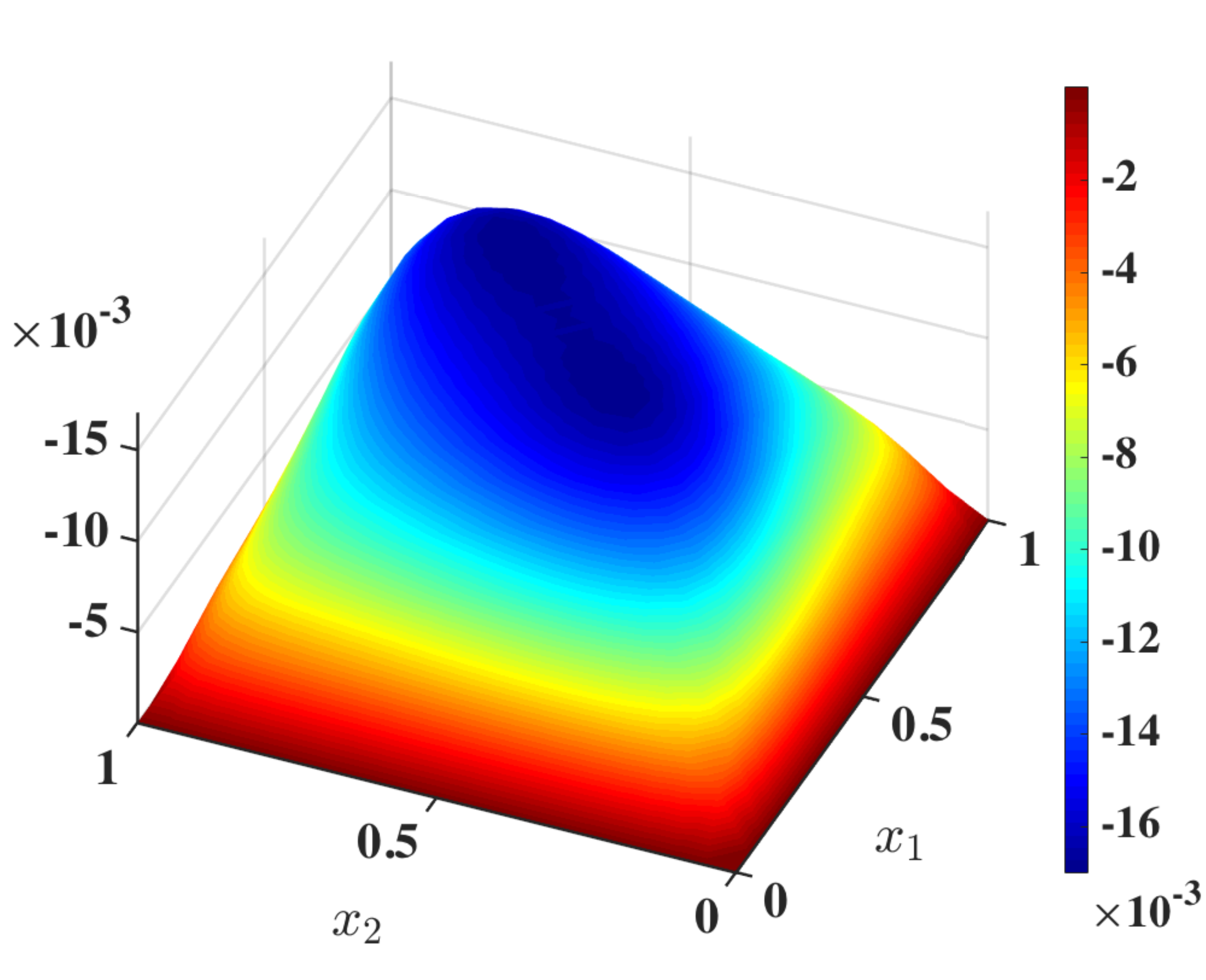}
		\caption{Output PCE coefficient $
		\PCEcoeffsGlobalFieldArgs{\mathsf v}{(1,1)}(x)
$}
	\end{subfigure}
	\begin{subfigure}{0.45\textwidth}
		\centering
\includegraphics[width=0.6\textwidth]{./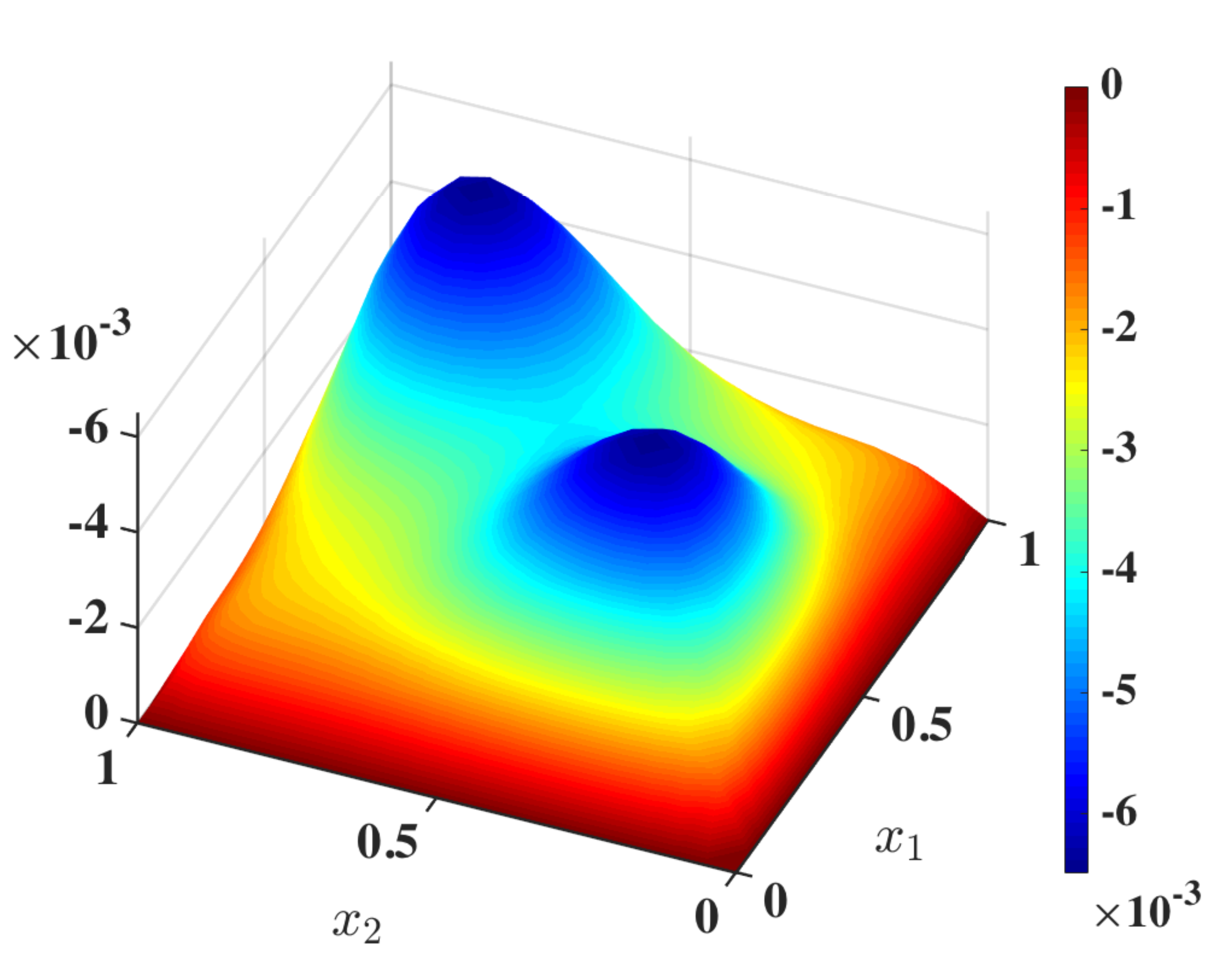}
		\caption{Output PCE coefficient $
		\PCEcoeffsGlobalFieldArgs{\mathsf v}{(0,2)}(x)
$}
	\end{subfigure}
	\begin{subfigure}{0.45\textwidth}
		\centering
\includegraphics[width=0.6\textwidth]{./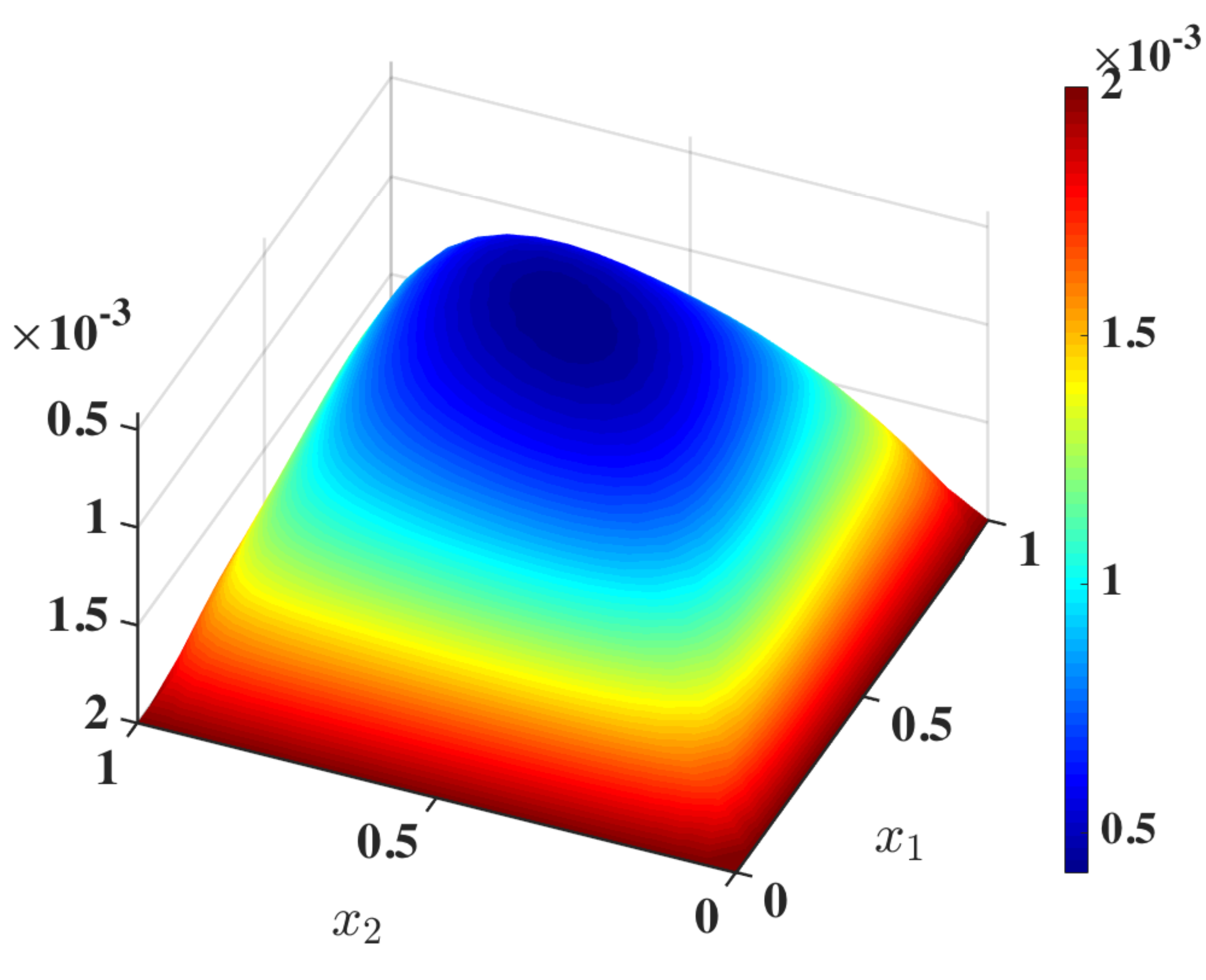}
		\caption{Output PCE coefficient $
		\PCEcoeffsGlobalFieldArgs{\mathsf v}{(3,0)}(x)
$}
	\end{subfigure}
	\begin{subfigure}{0.45\textwidth}
		\centering
\includegraphics[width=0.6\textwidth]{./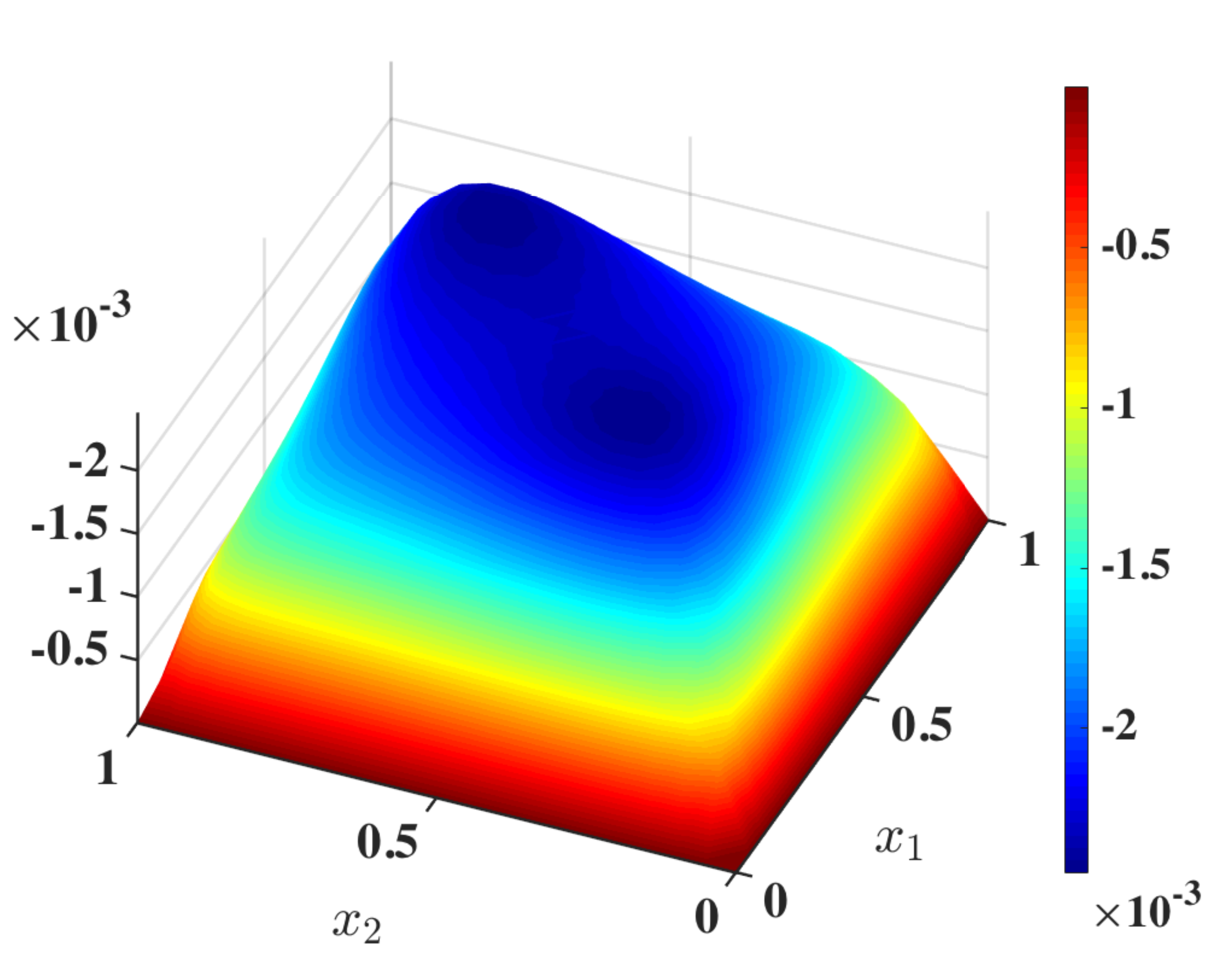}
		\caption{Output PCE coefficient $
		\PCEcoeffsGlobalFieldArgs{\mathsf v}{(2,1)}(x)
$}
	\end{subfigure}
	\begin{subfigure}{0.45\textwidth}
		\centering
\includegraphics[width=0.6\textwidth]{./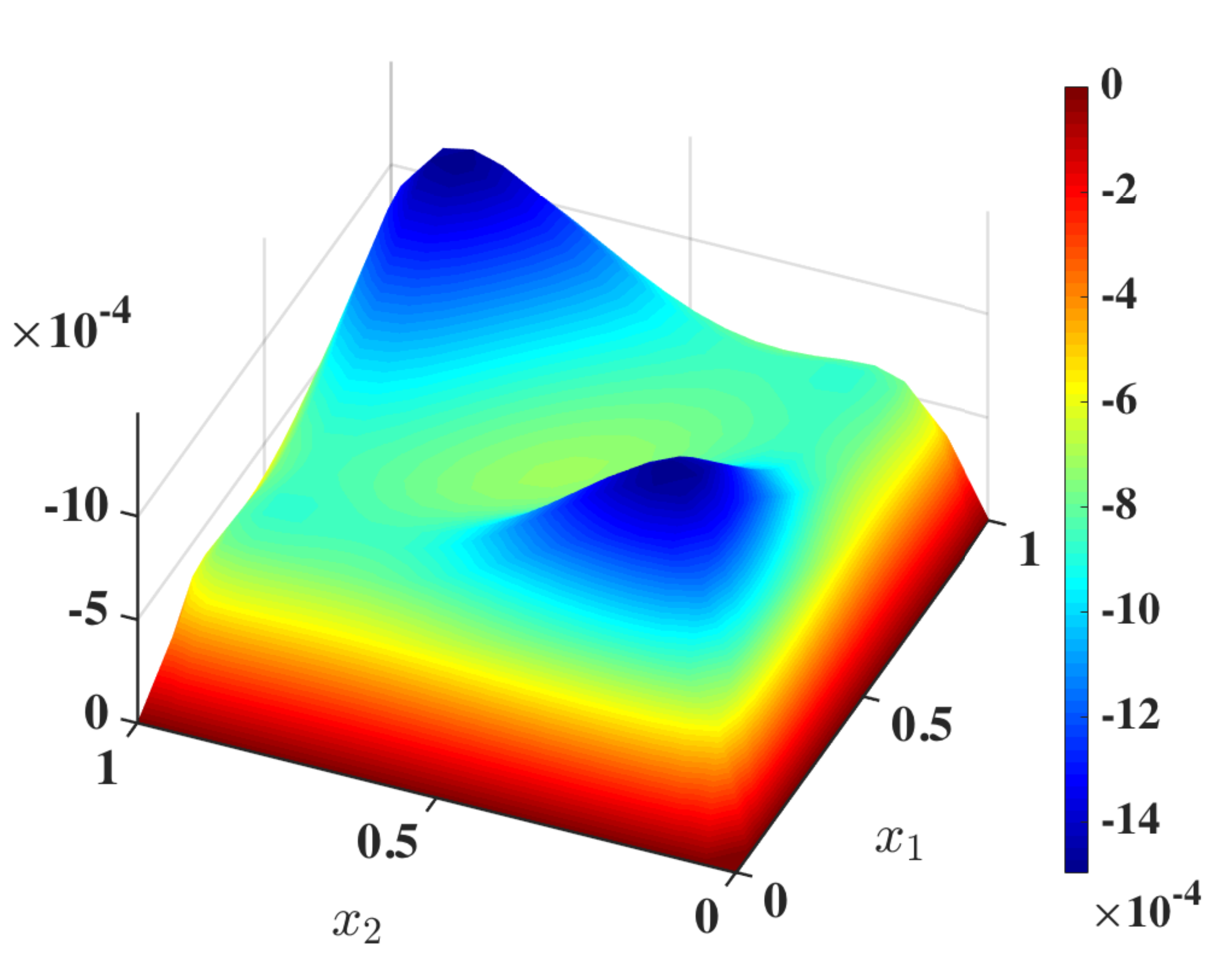}
		\caption{Output PCE coefficient $
		\PCEcoeffsGlobalFieldArgs{\mathsf v}{(1,2)}(x)
$}
	\end{subfigure}
	\begin{subfigure}{0.45\textwidth}
		\centering
\includegraphics[width=0.6\textwidth]{./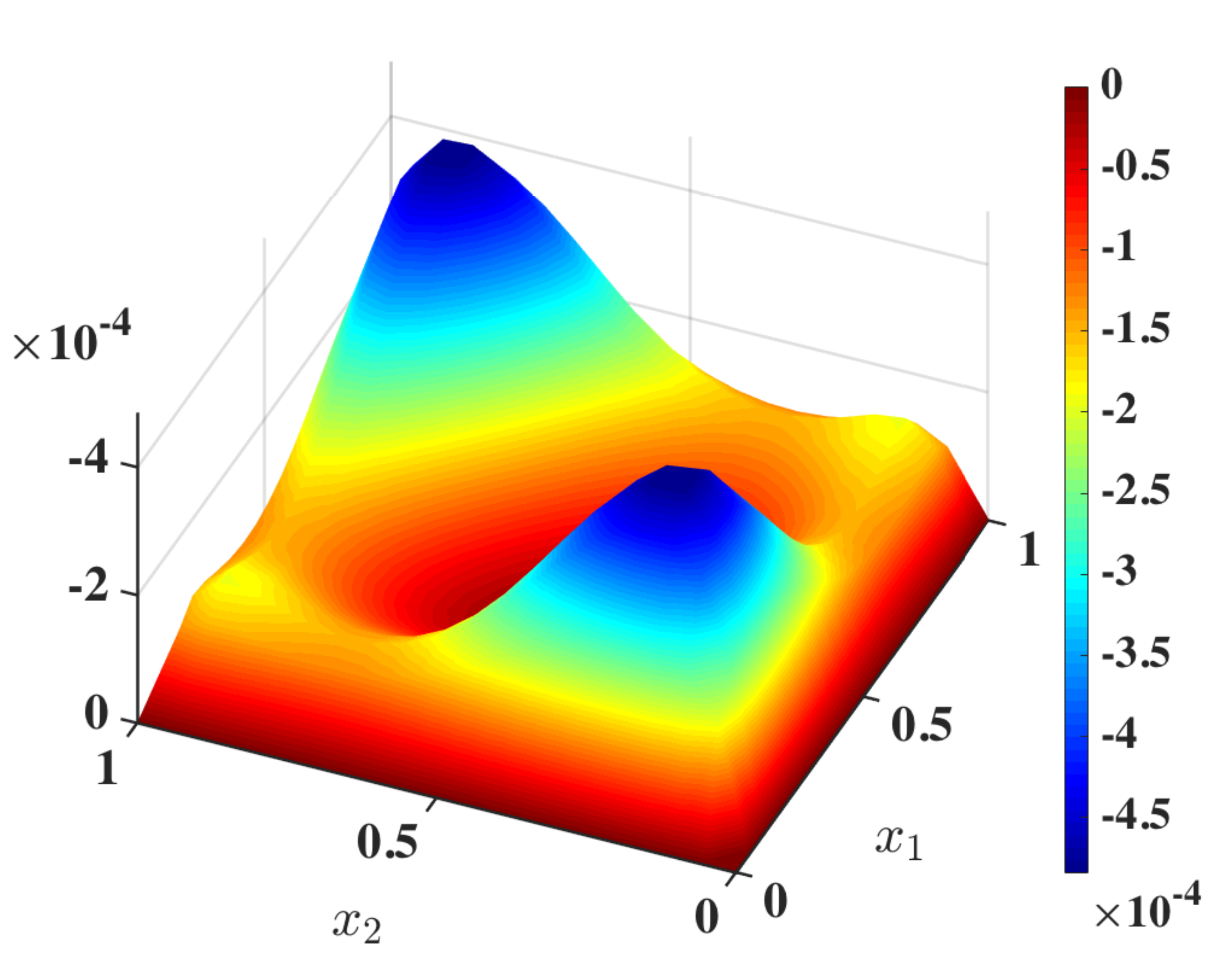}
		\caption{Output PCE coefficient $
		\PCEcoeffsGlobalFieldArgs{\mathsf v}{(0,3)}(x)
$}
	\end{subfigure}
	\caption{\captionFirst PCE coefficients for
	the random field $\mathsf v(x)$ computed via NISP
	using a
16-point, 4-level Gauss--Hermite quadrature rule.}
\label{fig:2dheat_soln}
\end{figure}

\subsection{Component and network uncertainty-propagation
problems}\label{sec:netUPproblem}

We construct the network uncertainty-propagation problem by decomposing the
global problem into overlapping subdomains
$\{\subdomainArg{i}\}_{i=1}^{\nsubsystems}$ satisfying
$\cup_{i=1}^{\nsubsystems}\subdomainArg{i}=\domain$, and treating each subdomain as a
component.  Then, the component uncertainty-propagation problem is defined
as uncertainty propagation performed on the associated subdomain.  In
particular, the deterministic BVP for the $i$th component is
\begin{align}\label{eq:PDEsub}
	\begin{split}
	&- \frac{\partial^2 v^i}{\partial \spatialVarArg{1}\partial\spatialVarArg{2}} + \left(e^{\mu v} - 1 \right)  = 10 \sin \left(2 \pi
	\spatialVarArg{1} \right) \sin \left(2 \pi x_2 \right) \ , \quad
	\spatialVar \in
	\subdomainArg{i} \subseteq\domain\\
	&v^i(x) = v_\Gamma,\ x\in\partial \subdomainArg{i}\cap\partial
	\domain,\qquad
	v^i(x) = v^j(x),\ x\in\partial
		\subdomainArg{i}\cap\subdomainArg{j}\setminus\partial
\subdomainArg{j},\
		j\in\neighborSet
	\end{split}
\end{align}
for $i=1,\ldots,\nsubsystems$, where
$\neighborSet\subset\{1,\ldots,\nsubsystems\}$ denotes the components
neighboring the $i$th component.

The $i$th component uncertainty-propagation problem is now: Given global input
random variables $\inputsexoRVGlobalArg{1}$ and $\inputsexoRVGlobalArg{2}$
characterized using polynomial-chaos expansions \eqref{eq:globalInputPCE} with
coefficients provided by Table \ref{tab:PCEglobal}, as well as
random fields
$\mathsf v^j:
\partial
		\subdomainArg{i}\cap\subdomainArg{j}\setminus\partial
\subdomainArg{j}
$ for $j\in\neighborSet$, where
$\mathsf v^j(x)$ is the random variable associated with the variable $v^j(x)$
on the boundary $x\in\partial
		\subdomainArg{i}\cap\subdomainArg{j}\setminus\partial
\subdomainArg{j}$,
compute the random field
$\mathsf v^i:\subdomainArg{j}\rightarrow\RVspaceDim{}$, where $\mathsf v^i(x)$ is the
random variable associated with the variable $v^i(x)$ for
$x\in\subdomainArg{i}$.

As with the global uncertainty-propagation problem, we
solve this component uncertainty-propagation with non-intrusive spectral
projection (NISP) using a 16-point, 4-level Gauss--Hermite quadrature rule, which yields a PCE
representation for the random field. Each of the 16 quadrature points
yields one instance of the component deterministic problem \eqref{eq:PDEsub}. We
use the same spatial discretization as the global problem, and we solve
the resulting system of nonlinear equations using Newton's method. The initial
guess for the Newton solver is the solution of the problem at the previous
fixed-point iteration; at the first iteration, the initial guess is zero.

Again, we truncate the polynomial-chaos expansion of the $i$th component's
random field at the same level as the global inputs such that
\begin{equation}
\mathsf v^i(x)  =
	\sum_{\multiindex\in\multiindexGlobalSetArgs{\inputsexoRV}}
	\PCEbasisArgs{\multiindex}{\underlyingRV}\PCEcoeffsFieldArgs{\mathsf
	v}{\multiindex}{i}(x),\quad x\in\subdomainArg{i}.
\end{equation}

Abstractly, we view the mapping from global input random variables
$\inputsexoRVGlobalArg{1}$ and $\inputsexoRVGlobalArg{2}$
and neighbor field random variables
$
v^j(x),\ x\in\partial
		\subdomainArg{i}\cap\subdomainArg{j}\setminus\partial
\subdomainArg{j},\
		j\in\neighborSet $
to the field random variables
$
v^i(x),\ x\in
\partial
		\subdomainArg{j}\cap\subdomainArg{i}\setminus\partial
\subdomainArg{i},\
		j\in\neighborSet $ computed using NISP in this manner as defining the
		component uncertainty-propagation problem defined in Section
		\ref{sec:localProb}. In particular,
	the exogenous-input random variables correspond to the
	component global input random variables such that $\inputsexoRVi =
	(\inputsexoRVGlobalArg{1},\inputsexoRVGlobalArg{2})$, the endogenous-input
	random variables correspond to the neighbor field random variables
	such that
$
\inputsendoRVi
=
(v^j(x),\ x\in\partial
		\subdomainArg{i}\cap\subdomainArg{j}\setminus\partial
\subdomainArg{j})_{
	j\in\neighborSet}
$, and the output random variables correspond to the field random variables on
the current subdomain that are used to define boundary conditions on
neighboring subdomains such that
$
\outputsRVi= (
v^i(x),\ x\in
\partial
		\subdomainArg{j}\cap\subdomainArg{i}\setminus\partial
\subdomainArg{i})_{j\in\neighborSet}
$. Because we are employing a finite-element spatial discretization, all field
variables are represented in the finite-element trial space such that the
endogenous-input random variables $\inputsendoRVi$ and output random variables
$\outputsRVi$ defined above are also finite dimensional.

As described in
Remark \ref{rem:PCE}, because we are employing PCE representations of random
variables, we can equivalently express the component uncertainty-propagation
problem in terms of the PCE coefficients themselves via
Eq.~\eqref{eq:propagatoriDef}. The associated network uncertainty-propagation
problem in terms of PCE coefficients is derived analogously to
\eqref{eq:globalResRV} and is
\begin{equation}\label{eq:globalRes}
\res(\outputsTrue,\inputsexo) = \zero,
\end{equation}
where
$\inputsexo\defeq[\inputsexoArg{i}^T\ \cdots\
\inputsexoArg{\nsubsystems}^T]^T\in\spaceDim{\ninputsexo}$ and
$\outputs\defeq[\outputsArg{i}^T\ \cdots\
\outputsArg{\nsubsystems}^T]^T\in\spaceDim{\noutputs}$
\ such that
$\ninputsexo\defeq\sum_{i=1}^{\nsubsystems}\ninputsexoArg{i}$
and
$\noutputs\defeq\sum_{i=1}^{\nsubsystems}\noutputsArg{i}$.
We denote the value of the outputs that satisfies the fixed point problem
by
$\outputsTrue\equiv\outputsTrue(\inputsexo)\in\spaceDim{\noutputs}$.

Figure~\ref{fig:2dheat_dd} illustrates construction of the network
uncertainty-propagation problem
a decomposition involving 4 subdomains defined on a $2 \times 2$ grid.
Note that we employ an overlap region spanning one element between neighboring subdomains.
Analogously, Fig.~\ref{fig:2dheat_network_4x4} illustrates the network
connectivity for a decomposition involving 16 subdomains defined on a $4
\times 4$ grid.

\begin{figure}[h!]
	\centering
	\begin{subfigure}{0.2\textwidth}
		\centering
		\includegraphics[width=0.9\textwidth]{./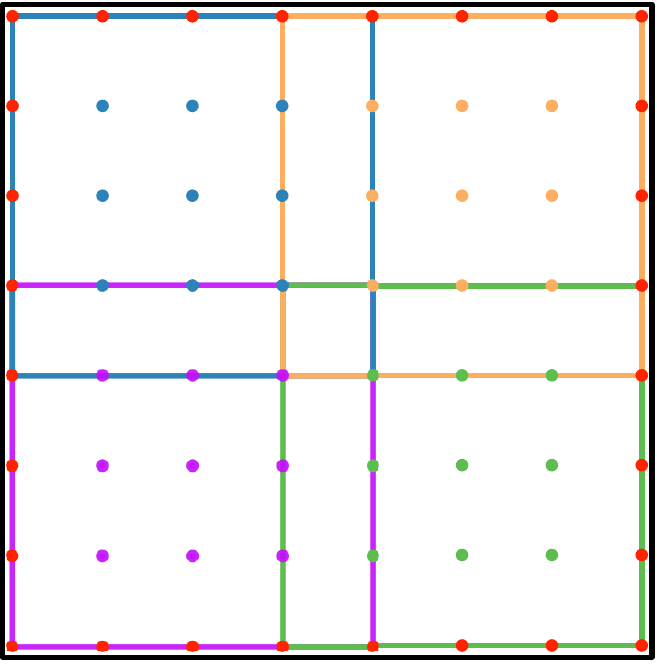}
		\caption{Decomposition of original domain into 4 overlapping subdomains or
		components}
	\end{subfigure}
	\begin{subfigure}{0.55\textwidth}
		\centering
		\includegraphics[width=0.9\textwidth]{./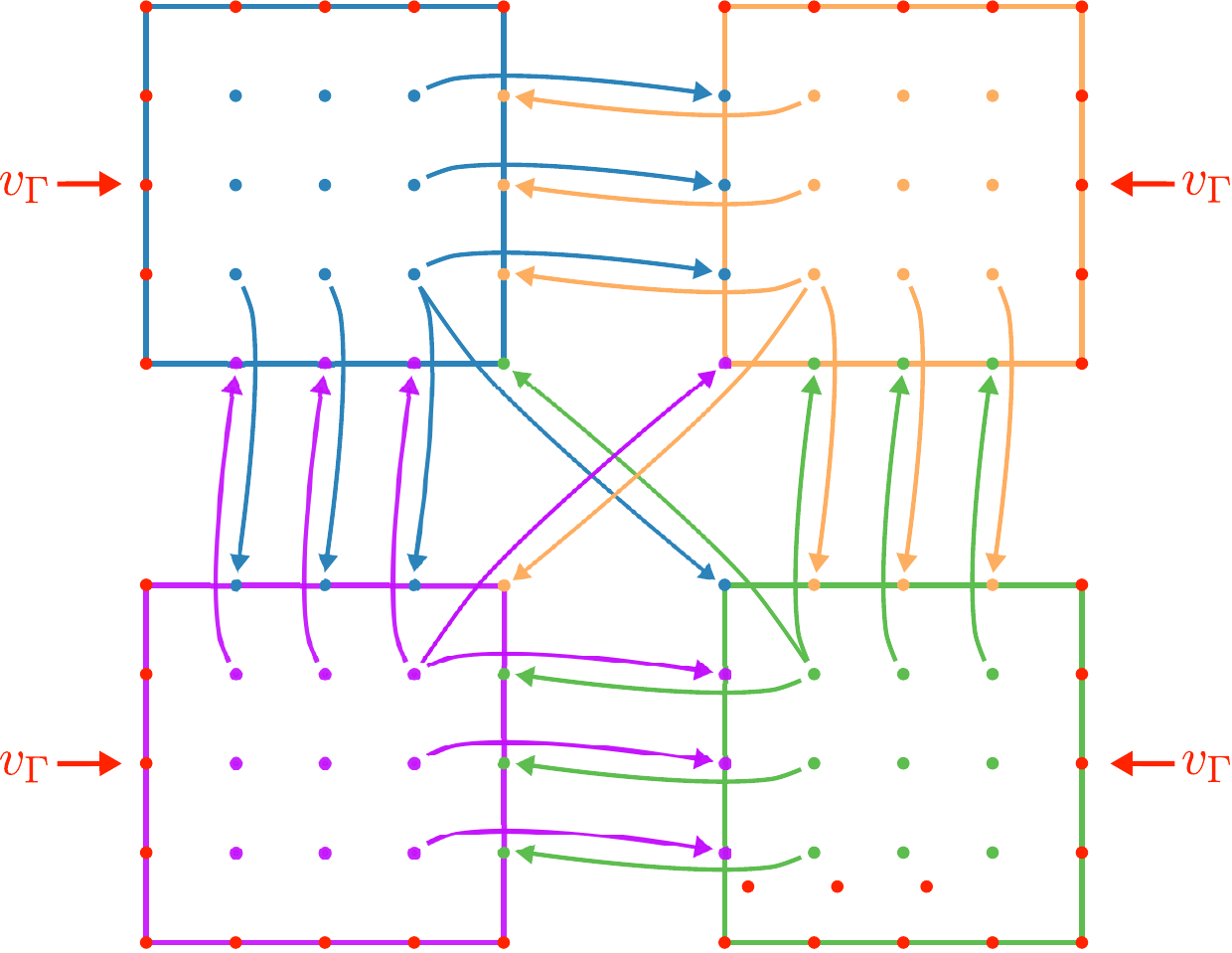}
		\caption{Inter-component connectivity}
	\end{subfigure}
	\begin{subfigure}{0.2\textwidth}
		\centering
		\includegraphics[width=0.9\textwidth]{./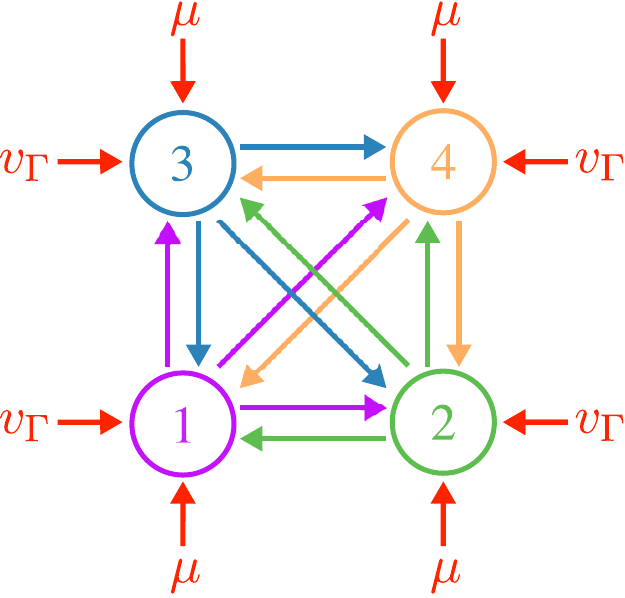}
		\caption{Resulting network}
	\end{subfigure}
	\caption{\captionFirst Network formulation for the $2\times 2$ component
	case.}
	\label{fig:2dheat_dd}
\end{figure}

\begin{figure}[h!]
	\centering
	\includegraphics[width=0.35\textwidth]{./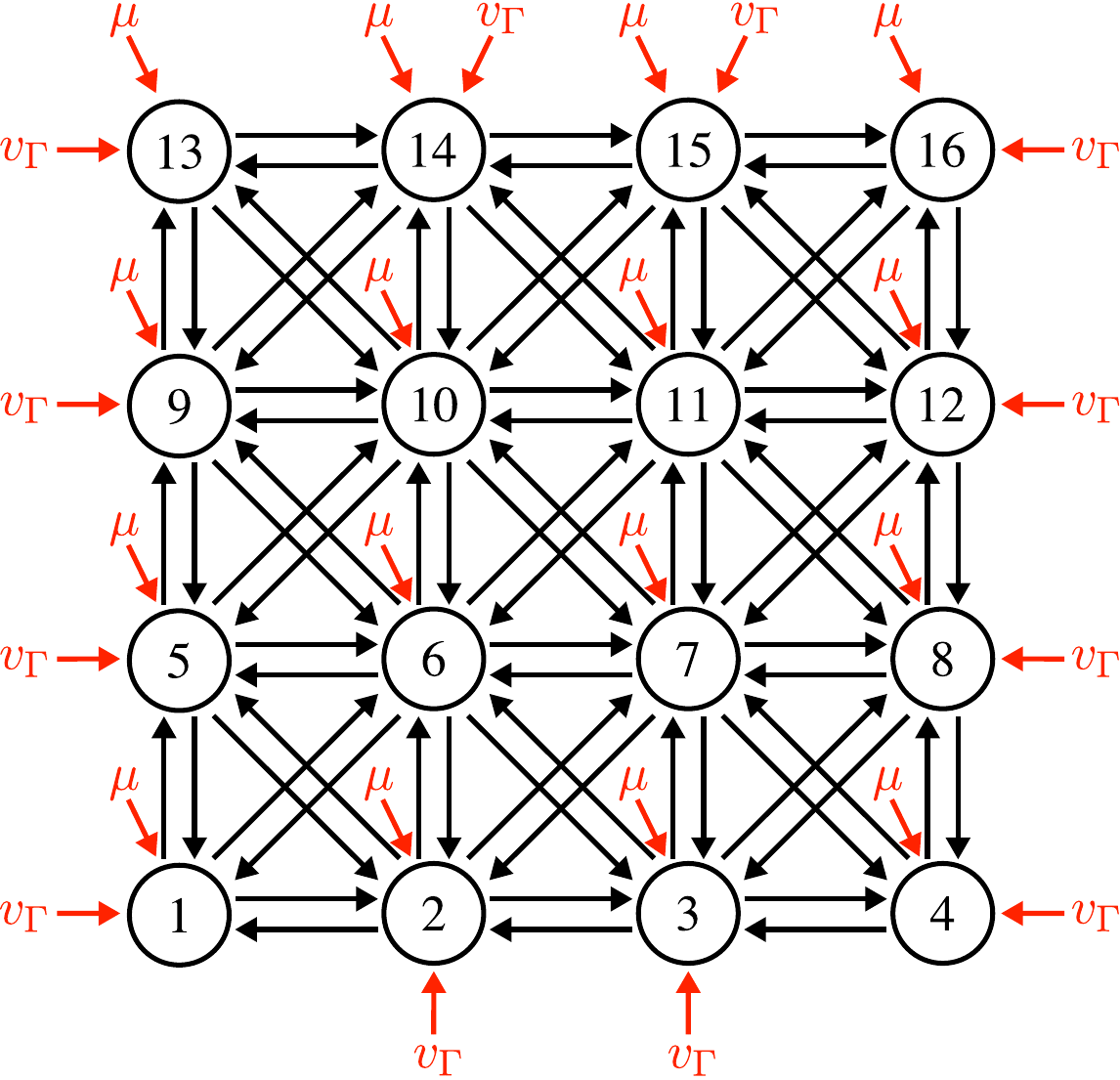}
	\caption{\captionFirst Network formulation $4\times 4$ component case.
	Figure shows coupling between nodes for the network uncertainty-propagation
	problem, with black edges corresponding to endogenous-input random variables
	and red edges corresponding to exogenous-input random variables.}
	\label{fig:2dheat_network_4x4}
\end{figure}

To solve the resulting network uncertainty-propagation problem, we investigate
both the Jacobi and Gauss--Seidel NetUQ methods, with two values of the relaxation
factor, with and without Anderson acceleration. In the case of Gauss--Seidel,
we also assess the effect of different permutations $\permutationTuple$
on performance. The initial guess for the relaxation methods is the zero
solution (i.e., all PCE coefficients for output random variables are zero).
To assess convergence of the relaxation methods at iteration $k$, we compute
the relative residual $\relativeResidual(\outputs^{(k)},\inputsexo)$, where
\begin{equation}
	\relativeResidual:(\outputs,\inputsexo) \mapsto \|\res(\outputs,\inputsexo)\|_2/
\|\res(\zero,\inputsexo)\|_2.
\end{equation}
All timings are obtained by performing calculations on an Intel(R) Xeon(R)
Core(TM) i7-5557U CPU @ 3.10GHz with 16 GB RAM. The NetUQ software is written
in Matlab; it wraps around the Uncertainty Quantification Toolkit (UQTk)
\cite{debusschere2004numerical,debusschere2016uncertainty}, which performs
component uncertainty propagation.

\subsection{Strong-scaling study}\label{sec:strong}

Here we investigate the strong scaling performance of NetUQ. In the context of
NetUQ, strong scaling associates with the case where a fixed system can be
broken down into smaller and smaller components or assemblies on which
uncertainty-propagation can be performed. To perform strong
scaling, we fix the finite-element discretization of the global problem using
1681 nodes arising from a mesh characterized by a uniform $41 \times 41$ grid.
Subsequently, we increase the number of subdomains used to define the network
on this fixed global finite-element discretization, always employing an
overlap region spanning one element, and always employing decompositions that
are uniform in $\spatialVarArg{1}$ and $\spatialVarArg{2}$.  In
particular, we consider $2\times 2$, $4\times 4$, and $8\times 8$
decompositions.  Note that the size of each component deterministic problem
increases as the number of components decreases, and employing
$\nsubsystems=1$ corresponds to the global uncertainty-propagation problem
described in Section \ref{sec:globUPprob}.

Fig.~\ref{fig:2dheat_strong_conv} reports convergence results for this study,
where we have employed a permutation $\permutationTuple$ for Gauss--Seidel that
yields the minimum number of sequential steps per iteration of $\nsequential=4$ .
This figure elucidates several trends. First, we note that convergence is
faster
for smaller networks; this is sensible, as more iterations are required
for information to propagate throughout the domain when the method uses a
larger number of (smaller) subdomains. Indeed, in the limiting case of
$\nsubsystems=1$, the NetUQ formulation is equivalent to the global
uncertainty-propagation problem, which---by definition---converges in a single
iteration.
Second, we note that Gauss--Seidel yields faster convergence than Jacobi when
measured as a function of the number of iterations. As discussed in Section
\ref{sec:gaussseidel}, this
occurs because Gauss--Seidel employs more updated information within each
iteration. However, because Gauss--Seidel employs more
sequential steps per iteration than the Jacobi method, each Gauss--Seidel
iteration incurs a larger wall time, and thus these results are
insufficient for assessing which method yields the best parallel wall-time performance.
Third, we observe that the classical iterations (i.e., employing a relaxation
factor of $\relaxation=1$) yield faster convergence than
under-relaxation performed with a
relaxation factor $\relaxation = 2/3$. However, over-relaxation
(i.e., employing a relaxation
factor of $\relaxation>1$) sometimes resulted in divergence; thus, we have
omitted these results (see Remark \ref{rem:controlConverge}).
Fourth, we note that employing Anderson acceleration yields substantially
faster convergence.

\begin{figure}[h!]
\centering
	\begin{subfigure}{0.4\textwidth}
	\centering
	\includegraphics[width=0.95\textwidth]{./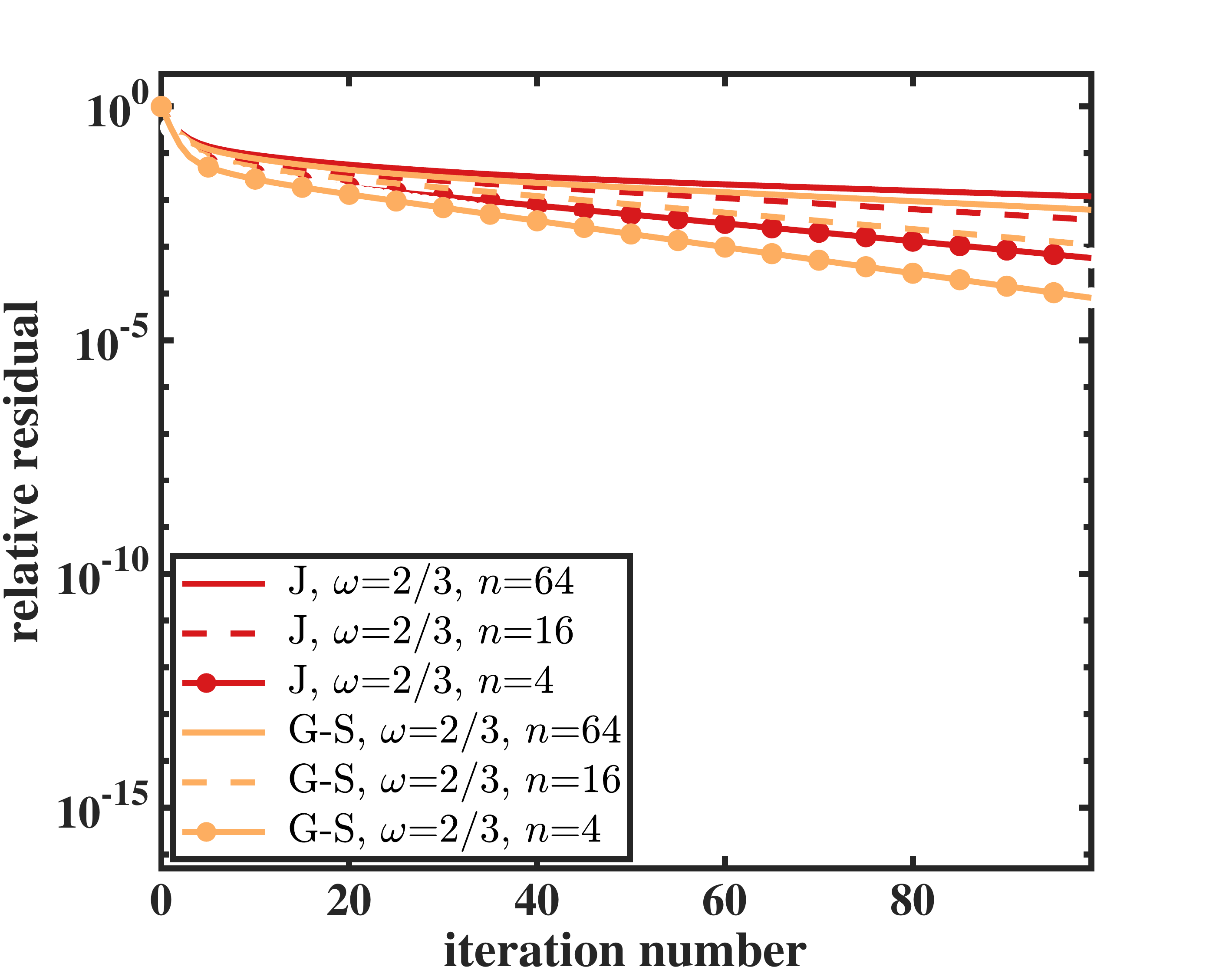}
	\caption{$\relaxation = 2/3$; no Anderson acceleration}
\end{subfigure}
\begin{subfigure}{0.4\textwidth}
	\centering
	\includegraphics[width=0.95\textwidth]{./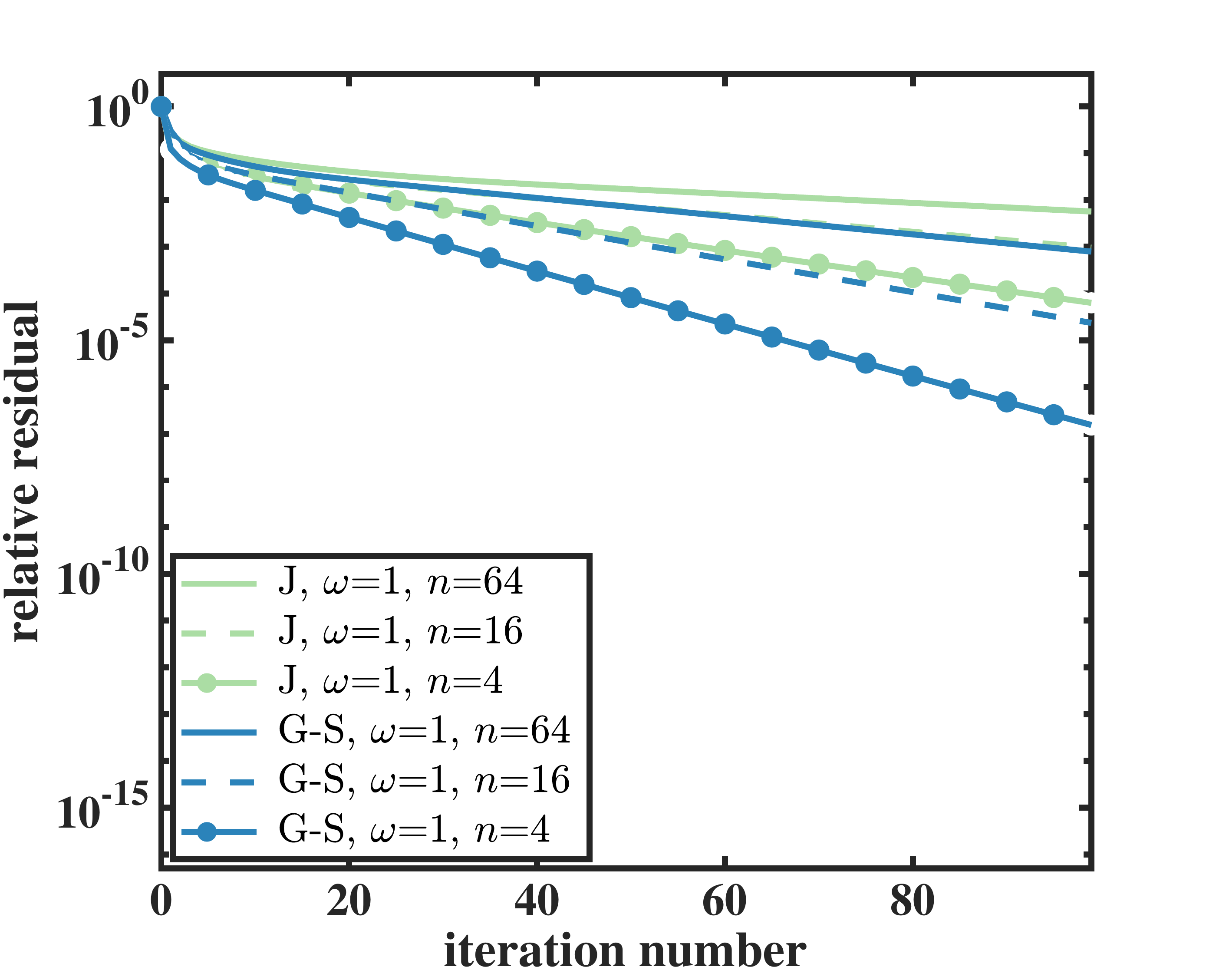}
	\caption{$\relaxation = 1$; no Anderson acceleration}
\end{subfigure}
\begin{subfigure}{0.4\textwidth}
\centering
\includegraphics[width=0.95\textwidth]{./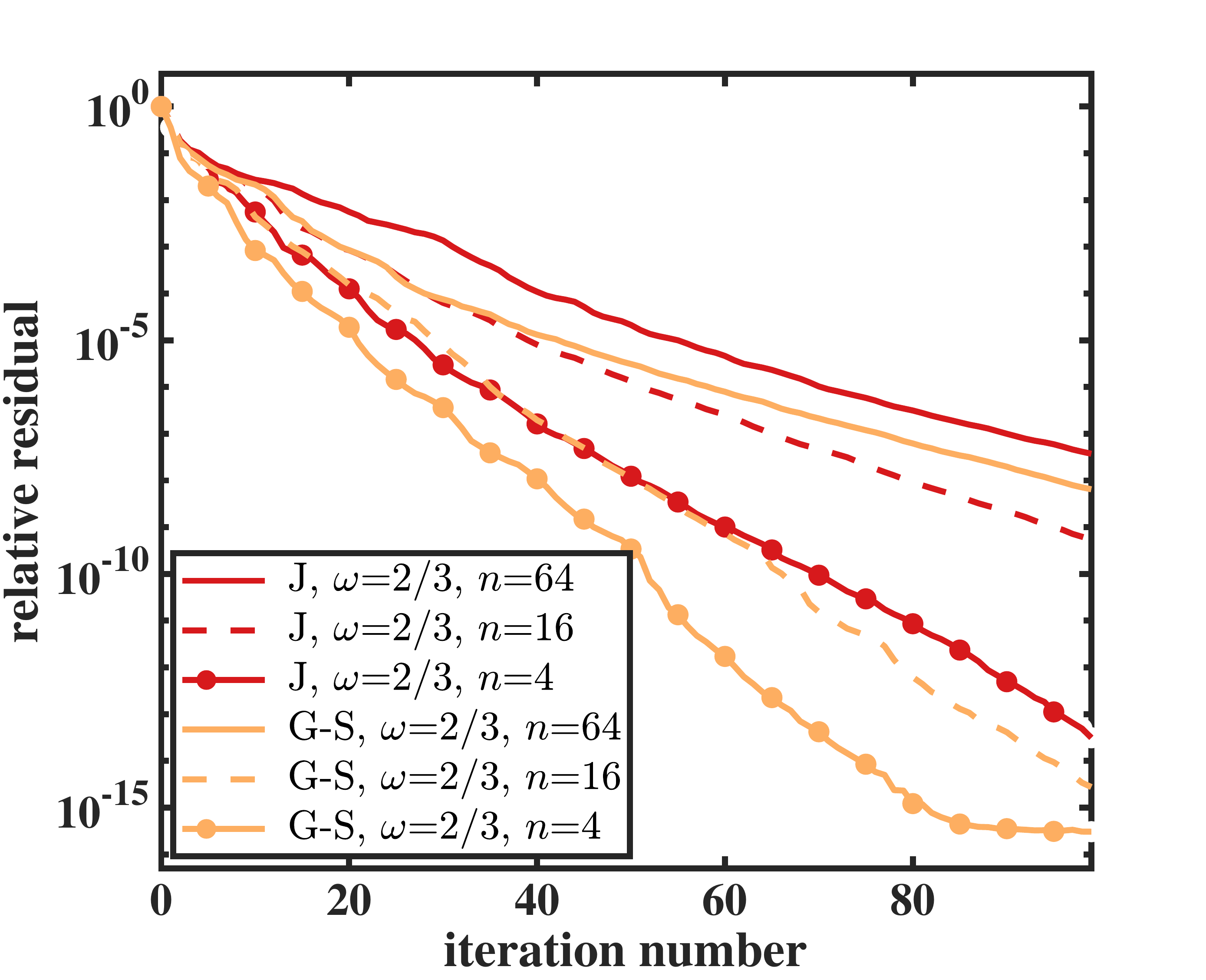}
\caption{$\relaxation = 2/3$; Anderson acceleration, $\memory=5$}
\end{subfigure}
\begin{subfigure}{0.4\textwidth}
\centering
\includegraphics[width=0.95\textwidth]{./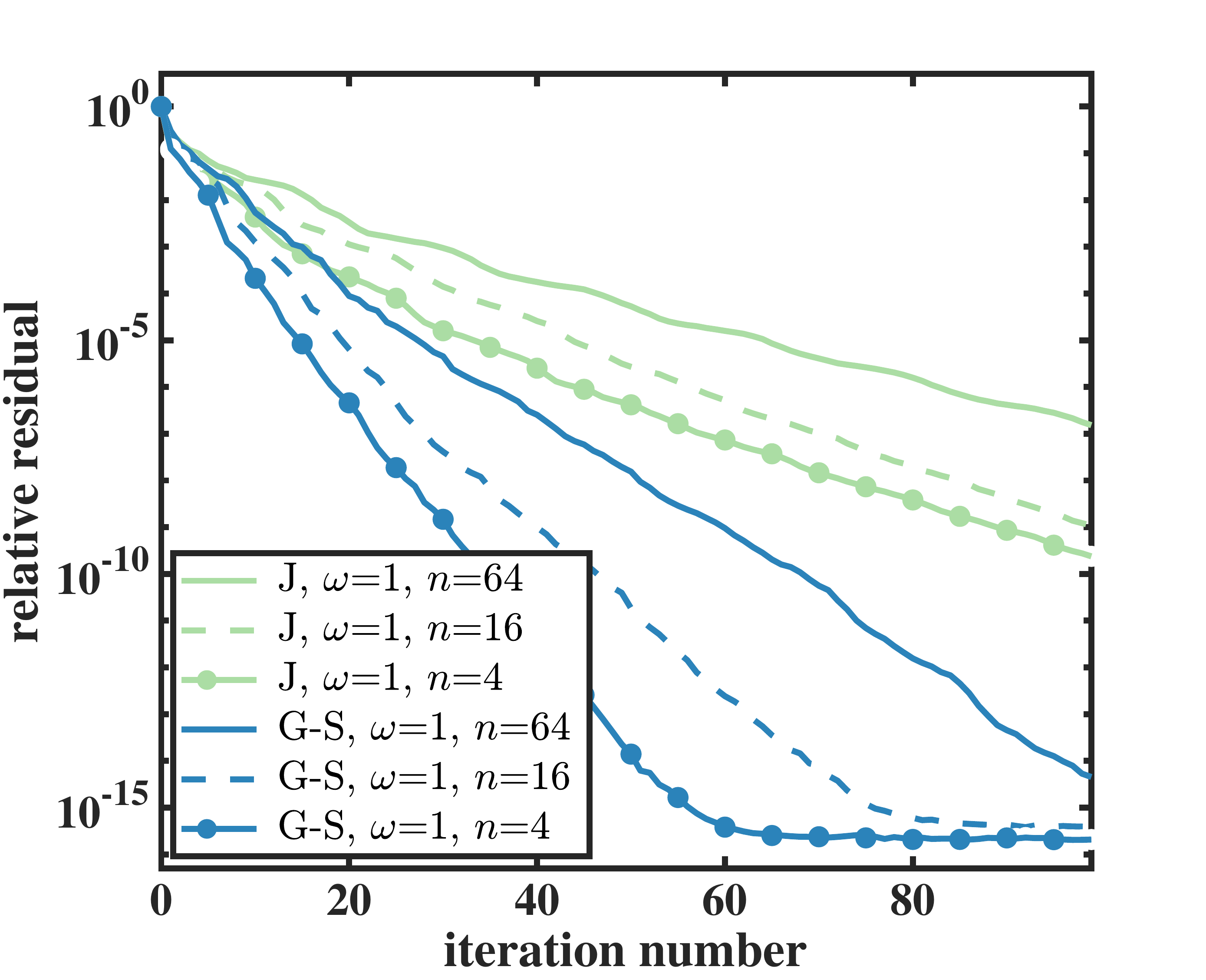}
\caption{$\relaxation = 1$; Anderson acceleration, $\memory=5$}
\end{subfigure}
\caption{\captionFirst Strong-scaling study: convergence results. J and G-S
	labels refer to the Jacobi and Gauss--Seidel methods, respectively. Note
	that convergence is faster when NetUQ employs smaller networks,
	Gauss--Seidel (rather than Jacobi), $\relaxation=1$ (rather than
	$\relaxation=2/3$), and Anderson acceleration.}
\label{fig:2dheat_strong_conv}
\end{figure}

Next, Fig.~\ref{fig:2dheat_strong_scalability} reports the dependence of
several performance quantities as a function of the number of components.
This figure reports results for NetUQ without Anderson acceleration, and for
the case where iterations are terminated when the relative residual reaches a
value of $10^{-3}$.  Consistent with Fig.~\ref{fig:2dheat_strong_conv},
Fig.~\ref{strongNoAAIt} shows that the lowest iteration count is achieved for
smaller networks, Gauss--Seidel, and a relaxation factor of $\relaxation=1$.

Fig.~\ref{fig:2dheat_strong_scalability_b} reports the associated parallel
wall times, employing a one-to-one component-to-processor map.  These results show that the trends suggested by
Fig.~\ref{fig:2dheat_strong_scalability} can be reversed when accounting for
the lower computational cost of solving the component deterministic problems
\eqref{eq:PDEsub} for smaller subdomains and the larger number of sequential
steps per iteration for the Gauss--Seidel method. Specifically, this figure
shows that the smallest parallel wall times are achieved for Jacobi iteration
and larger network sizes.  This suggests that the embarrassingly parallel
nature of each Jacobi iteration outweighs the larger number of iterations it
requires for convergence relative to Gauss--Seidel, and the lower
computational cost of solving component deterministic problems
\eqref{eq:PDEsub} with smaller subdomains outweighs the larger number of
iterations needed for convergence.


Next, Fig.~\ref{fig:2dheat_strong_scalability_c} reports the speedup as a
function of the number of components, where the speedup is defined as the
ratio of serial execution time to parallel execution time. Here, we consider
the serial execution time to be the wall time required to solve the global
uncertainty-propagation problem described in Section \ref{sec:globUPprob}
using one computing core. We observe that---for four components---the
relatively large number of iterations required for convergence outweighs gains
achieved through parallelization, resulting in a speedup less than one.
Speedups greater than one are achieved only for $\nsubsystems=16$ in the case
of Jacobi with $\relaxation=1$ and for $\nsubsystems=64$ in the case of Jacobi
and Gauss--Seidel with $\relaxation=1$. However, we emphasize that the purpose
of NetUQ is not necessarily to achieve speedups in this particular context,
i.e., where solving the global uncertainty-propagation problem is feasible. As
described in the introduction, the true objective of NetUQ is to enable
full-system UQ in scenarios where it would be otherwise impossible due to
challenges in integrating component models, for example. Nonetheless, we show
in the next figure that substantial speedups can indeed be achieved when
Anderson acceleration is employed.

Next, Fig.~\ref{fig:2dheat_strong_scalability_d} quantifies the error incurred
by the network formulation. Recall from Section \ref{sec:error} that the
network formulation can incur error if the uncertainty-propagation operator
$\propagatorRV$ constitutes an approximation of an underlying ``truth''
operator $\propagatorTruthRV$. This is precisely the case in this context,
where the truth operator $\propagatorTruthRV$ associates with performing
uncertainty-propagation with infinite-dimensional PCE representations for
output random variables and endogenous-input random variables; this ``no edge
truncation'' case is mathematically equivalent to simply solving the global
uncertainty-propagation problem described in Section \ref{sec:globUPprob}.
Thus, to assess these errors, we compute the relative error in the PCE
coefficients of the field random variable at several spatial locations
computed using the NetUQ approach (executed with 3rd-order total-degree
truncation and satisfying a relative residual of $10^{-10}$) with respect to
the same PCE coefficients computed by solving the global
uncertainty-propagation problem. This relative error is computed as the
$\ell^2$-norm of the difference between these PCE coefficients, divided by the
$\ell^2$-norm of the PCE coefficients computed via global uncertainty
propagation.  Fig.~\ref{fig:2dheat_strong_scalability_d} shows that this error
increases as the number of components increases; this is sensible, as a larger
number of components implies more spatial locations where truncation is
imposed. The figure also shows that this error is larger for variables located
further from the domain boundary; this is also intuitive, as it suggests that
errors grow as the variable of interest is located further from from
one of the driving exogenous inputs.


\begin{figure}[h!]
\centering
\begin{subfigure}{0.4\textwidth}
	\centering
	\includegraphics[width=0.95\textwidth]{./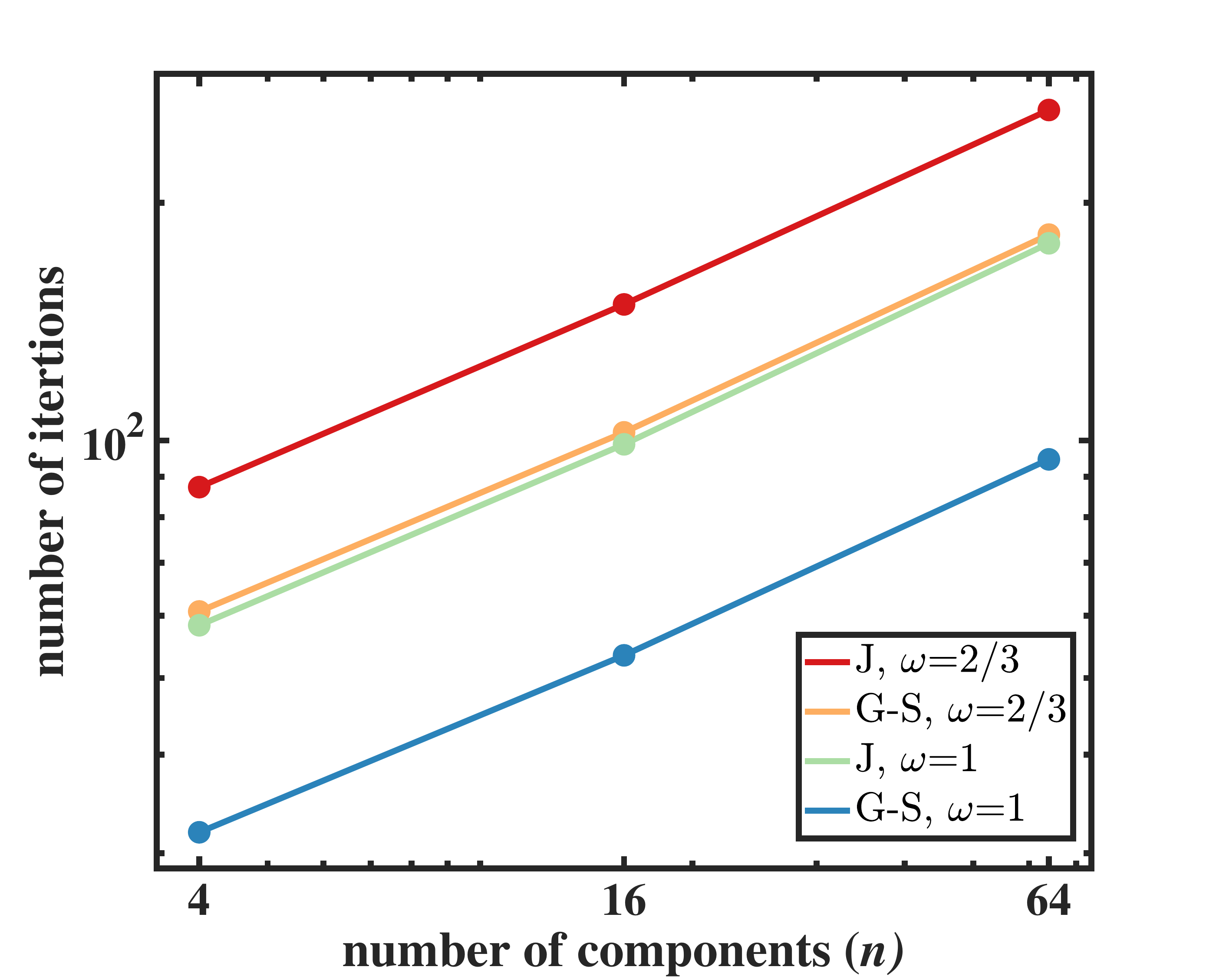}
	\caption{Number of iterations v.\ number of components}
	\label{strongNoAAIt}
\end{subfigure}
\begin{subfigure}{0.4\textwidth}
\centering
\includegraphics[width=0.95\textwidth]{./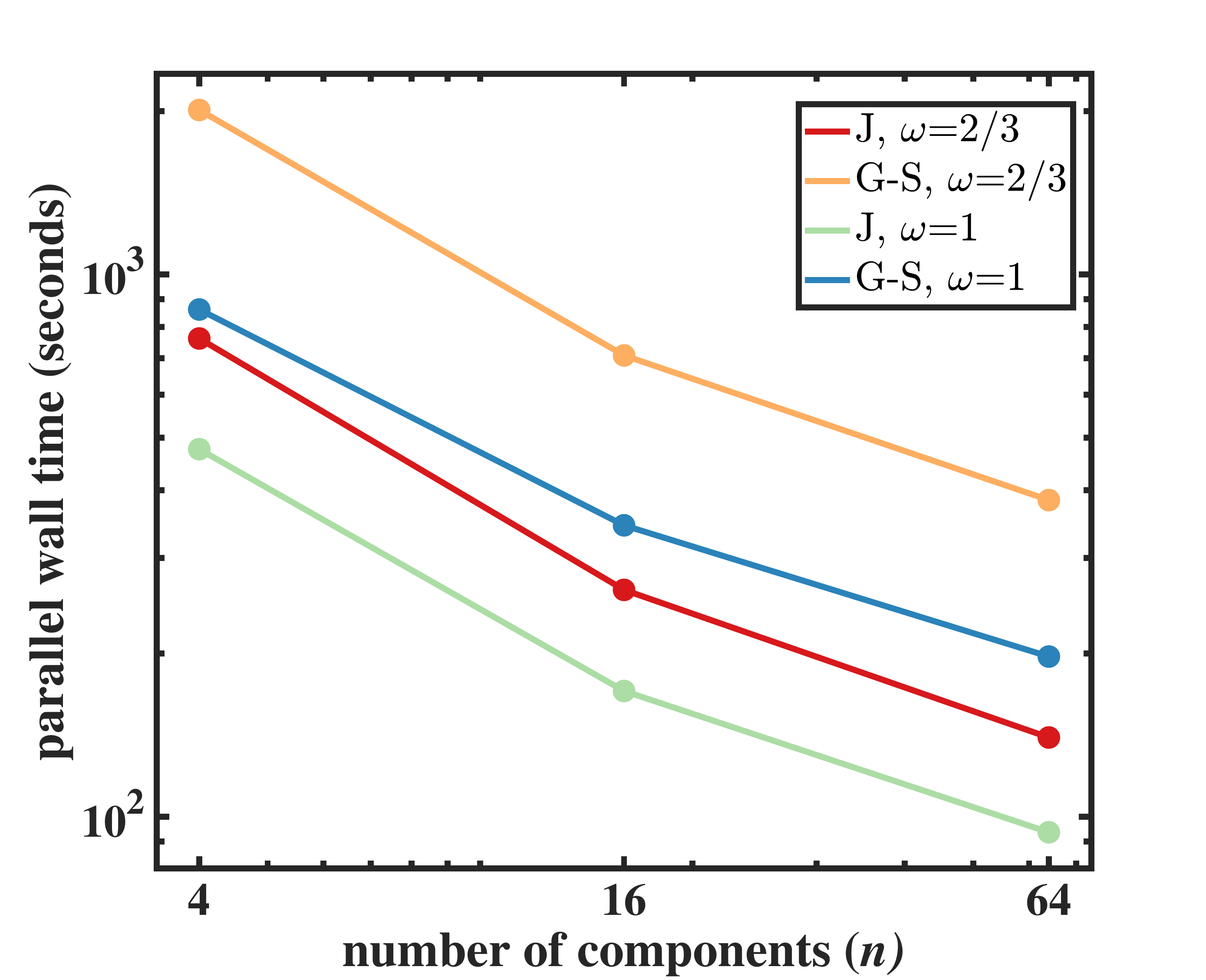}
\caption{Parallel wall time v.\ number of components}
\label{fig:2dheat_strong_scalability_b}
\end{subfigure}
\begin{subfigure}{0.4\textwidth}
\centering
\includegraphics[width=0.95\textwidth]{./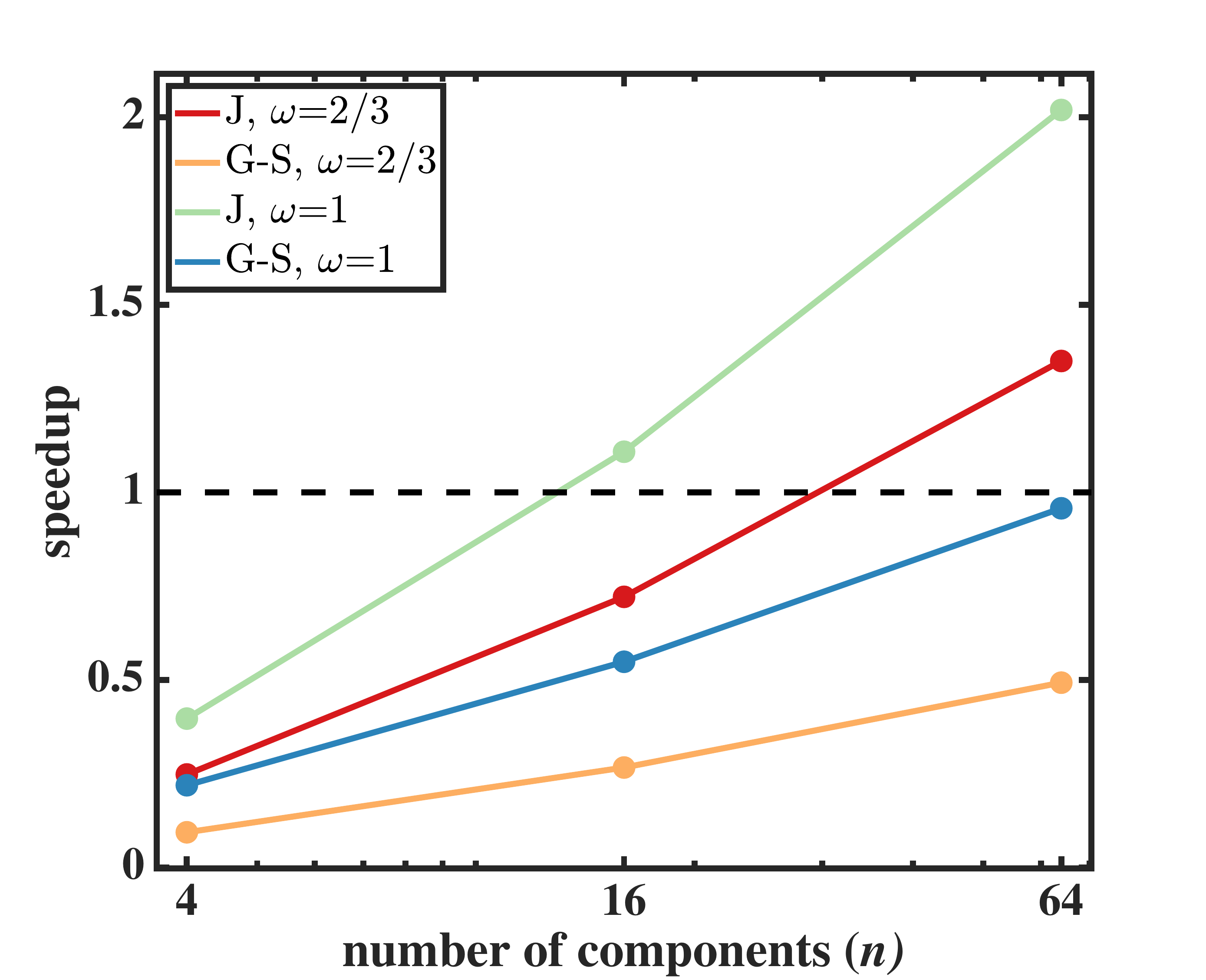}
\caption{Speedup v.\ number of components}
\label{fig:2dheat_strong_scalability_c}
\end{subfigure}
\begin{subfigure}{0.4\textwidth}
\centering
\includegraphics[width=0.95\textwidth]{./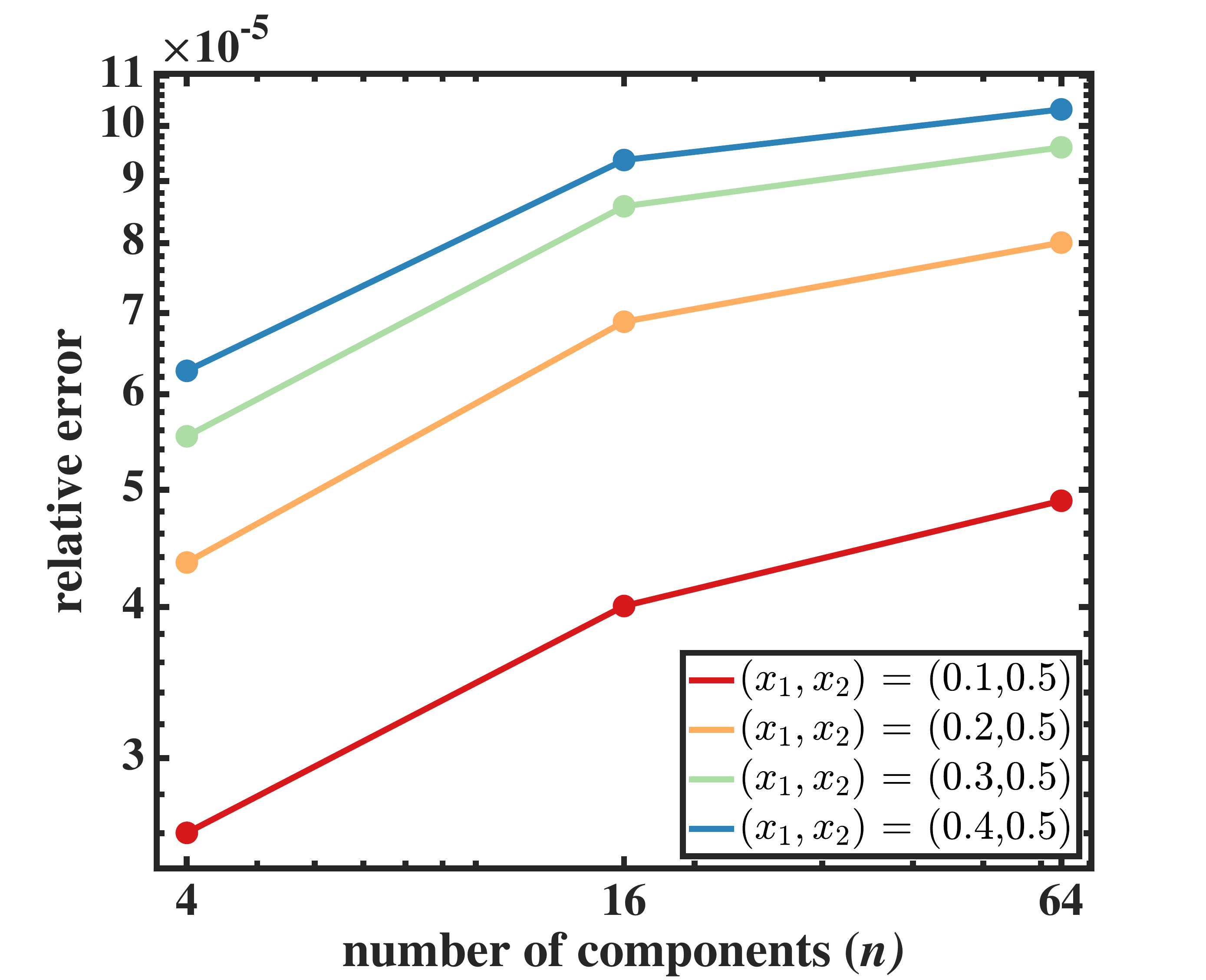}
\caption{Relative error v.\ number of components}
\label{fig:2dheat_strong_scalability_d}
\end{subfigure}
\caption{
\captionFirst	Strong-scaling study without
	Anderson acceleration. Iterations are terminated when the relative
	residual reaches a value of $10^{-3}$.
J and G-S
	labels refer to the Jacobi and Gauss--Seidel methods, respectively.
	Relative error refers to the
	error in the PCE
coefficients of the field random variable at several spatial locations
computed using the NetUQ approach (executed with 3rd-order total-degree
truncation and satisfying a relative residual of $10^{-10}$) with respect to
the same PCE coefficients computed by solving the global
uncertainty-propagation problem.}
\label{fig:2dheat_strong_scalability}
\end{figure}

Fig.~\ref{fig:2dheat_strong_scalability_AA} repeats the same study, but with
all relaxation methods using Anderson acceleration with $\memory=5$. Note that
we need not repeat the error-quantification study, as these results are
related to
the network formulation itself, and are independent of the relaxation method
used to numerically solve the network uncertainty-propagation problem.
Through a comparison with the corresponding figures in
Fig.~\ref{fig:2dheat_strong_scalability}, it is clear that Anderson acceleration
yields roughly an order-of-magnitude reduction in the number of iterations
required for convergence.  This in turn implies substantially lower parallel
wall times, as well as significantly larger speedups. Indeed, when Anderson
acceleration is employed, the NetUQ method realizes speedups as high as nearly
20 in the case of $\nsubsystems=16$ components and the Jacobi method.  One
noteworthy observation is that the maximum speedup corresponds to
the 16-component network. Since this problem is relatively small (i.e., an FEM
discretization with 1681 nodes), decomposing the domain into
smaller domains beyond a certain threshold (16 subdomains in this case) does not sufficiently reduce the
time required for each component uncertainty propagation to overcome the
increase in iteration count for the algorithm to converge. We also note that
Anderson acceleration has the effect of reducing the performance discrepancy
obtained using relaxation factors of $\relaxation=2/3$ and $\relaxation = 1$,
especially for the Jacobi method.

\begin{figure}[h!]
\centering
\begin{subfigure}{0.4\textwidth}
	\centering
	\includegraphics[width=0.95\textwidth]{./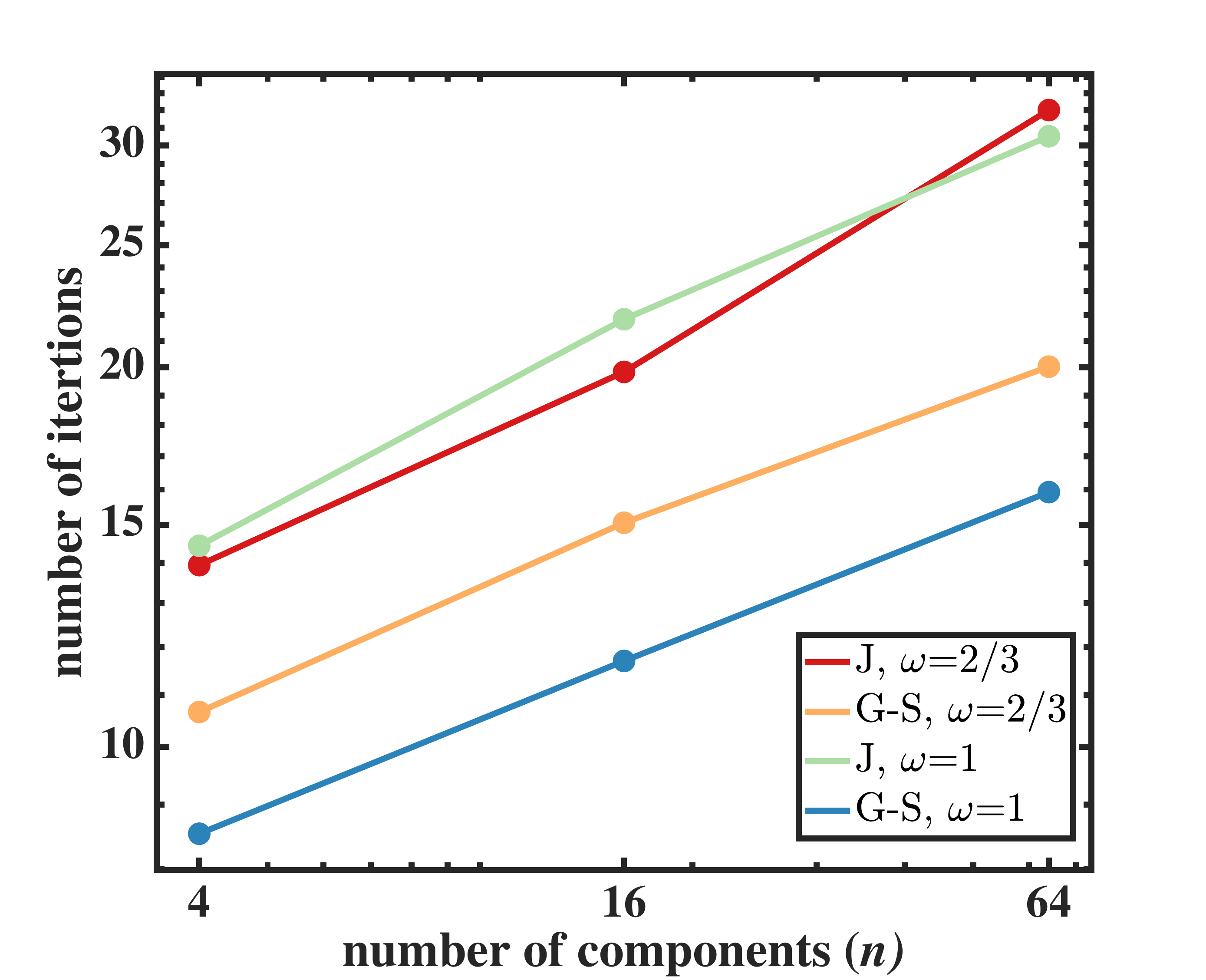}
	\caption{Number of iterations v.\ number of components}
\end{subfigure}
\begin{subfigure}{0.4\textwidth}
	\centering
	\includegraphics[width=0.95\textwidth]{./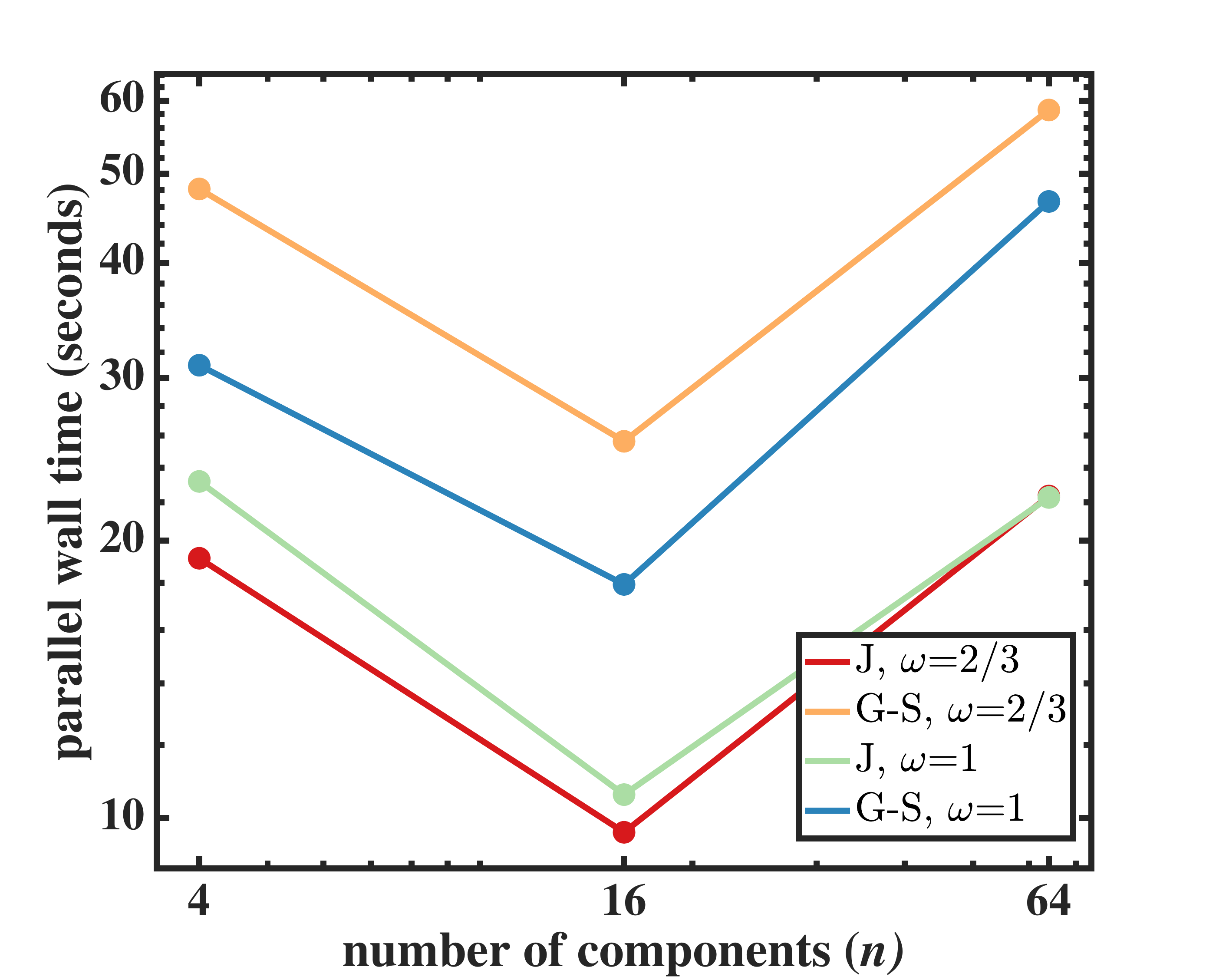}
	\caption{Parallel wall time v.\ number of components}
\end{subfigure}
\begin{subfigure}{0.4\textwidth}
	\centering
	\includegraphics[width=0.95\textwidth]{./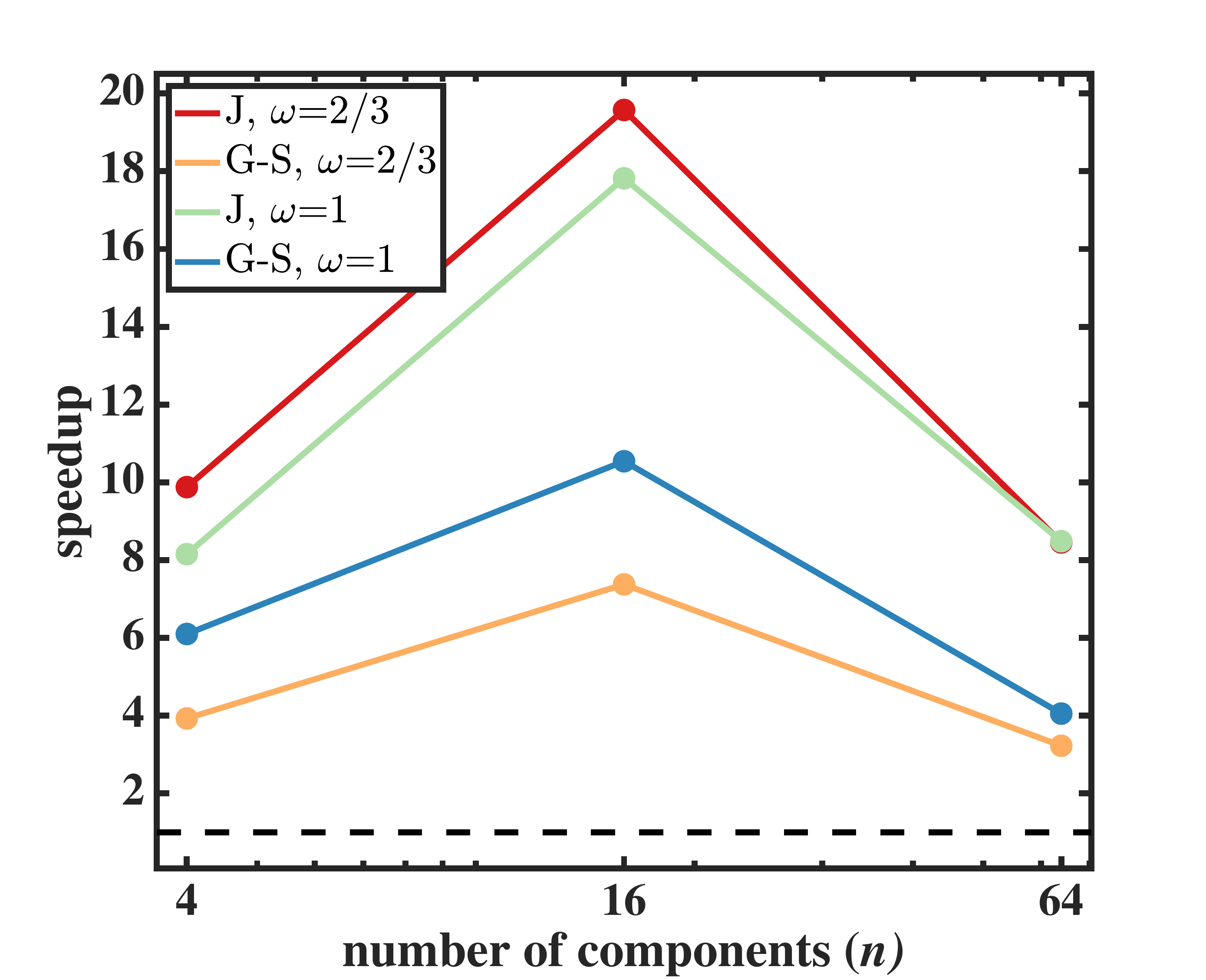}
	\caption{Speedup v.\ number of components}
\end{subfigure}
\caption{
\captionFirst	 Strong-scaling study using
	Anderson acceleration with $\memory = 5$.
Iterations are terminated when the relative
	residual reaches a value of $10^{-3}$.
J and G-S
	labels refer to the Jacobi and Gauss--Seidel methods, respectively.}
\label{fig:2dheat_strong_scalability_AA}
\end{figure}

Finally, to assess the effect of the permutation $\permutationTuple$ on
the performance of the Gauss--Seidel method,
Fig.~\ref{fig:2dheat_strong_scalability_perms_AA} reports strong-scaling
results for Gauss--Seidel with $\relaxation=1$, Anderson acceleration, and ten
random permutations $\permutationTuple$. Recall from Section \ref{sec:gaussseidel} that the permutation
effectively defines the DAG on which Gauss--Seidel propagates
uncertainties in a feed-forward manner within each iteration.  Figure
\ref{fig:2dheat_strong_scalability_perms_it_AA} shows that---for this
problem---the choice of permutation has essentially no effect on the
number of iterations required for convergence. However, Figure
\ref{fig:2dheat_strong_scalability_perms_nseq_AA} shows that the choice of
permutation does have a substantial effect on the number of sequential
sequential steps per iteration $\nsequential$ as computed using Algorithm
\ref{alg:DAG}, which in turn yields significant differences in
parallel wall time (Fig.~\ref{fig:2dheat_strong_scalability_perms_time_AA})
and speedup (Fig.~\ref{fig:2dheat_strong_scalability_perms_speedup_AA}). Thus,
for this problem, the permutation should be chosen to minimize the
number of sequential steps per iteration $\nsequential$.  Note that virtually
no speedup is observed even when using Anderson acceleration for the worst
performing permutation. Thus further emphasizes the need to carefully
consider the choice of permutation in practice.

\begin{figure}[h!]
\centering
\begin{subfigure}{0.4\textwidth}
	\centering
	\includegraphics[width=0.95\textwidth]{./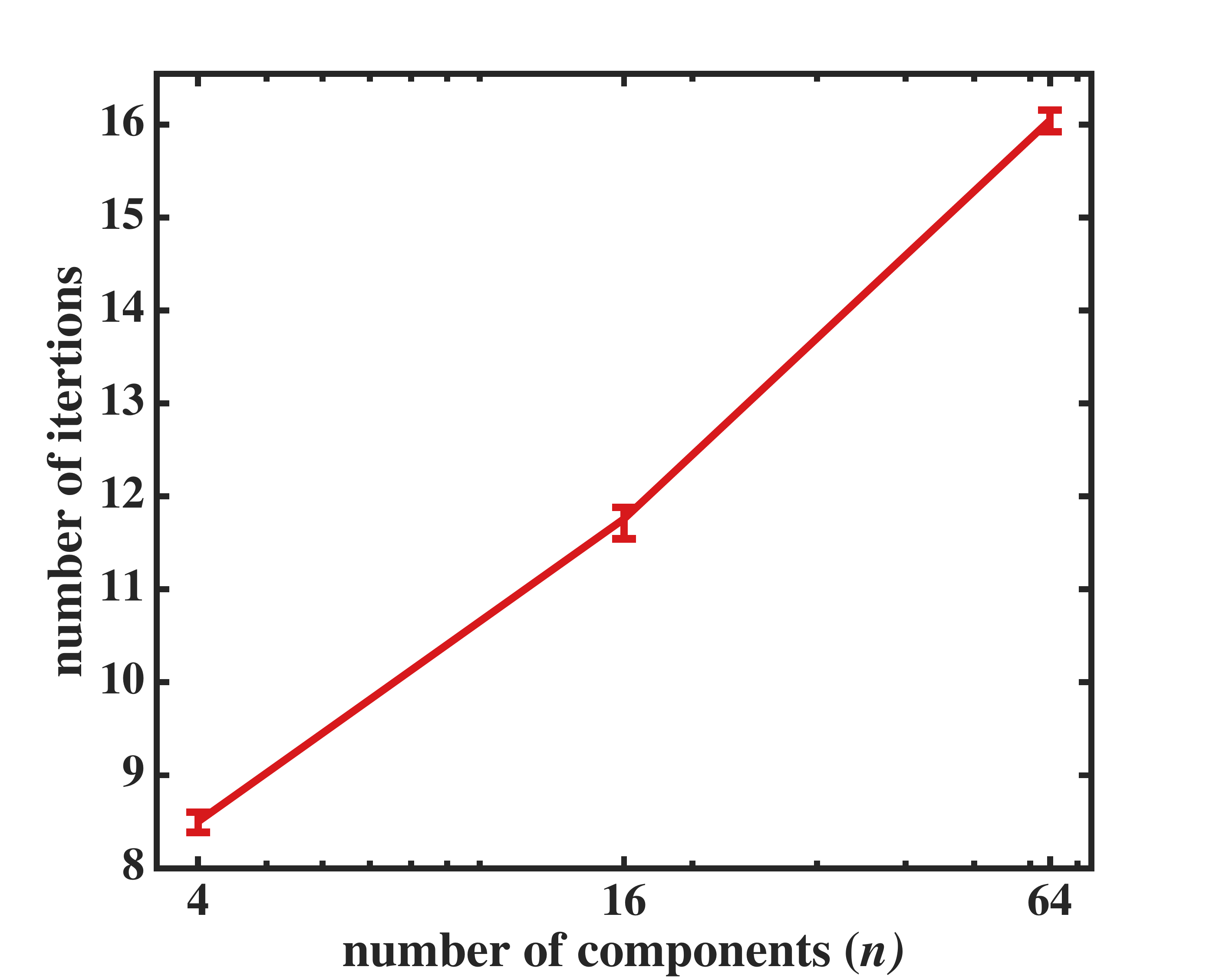}
	\caption{Number of iterations v.\ number of components}
\label{fig:2dheat_strong_scalability_perms_it_AA}
\end{subfigure}
\begin{subfigure}{0.4\textwidth}
	\centering
	\includegraphics[width=0.95\textwidth]{./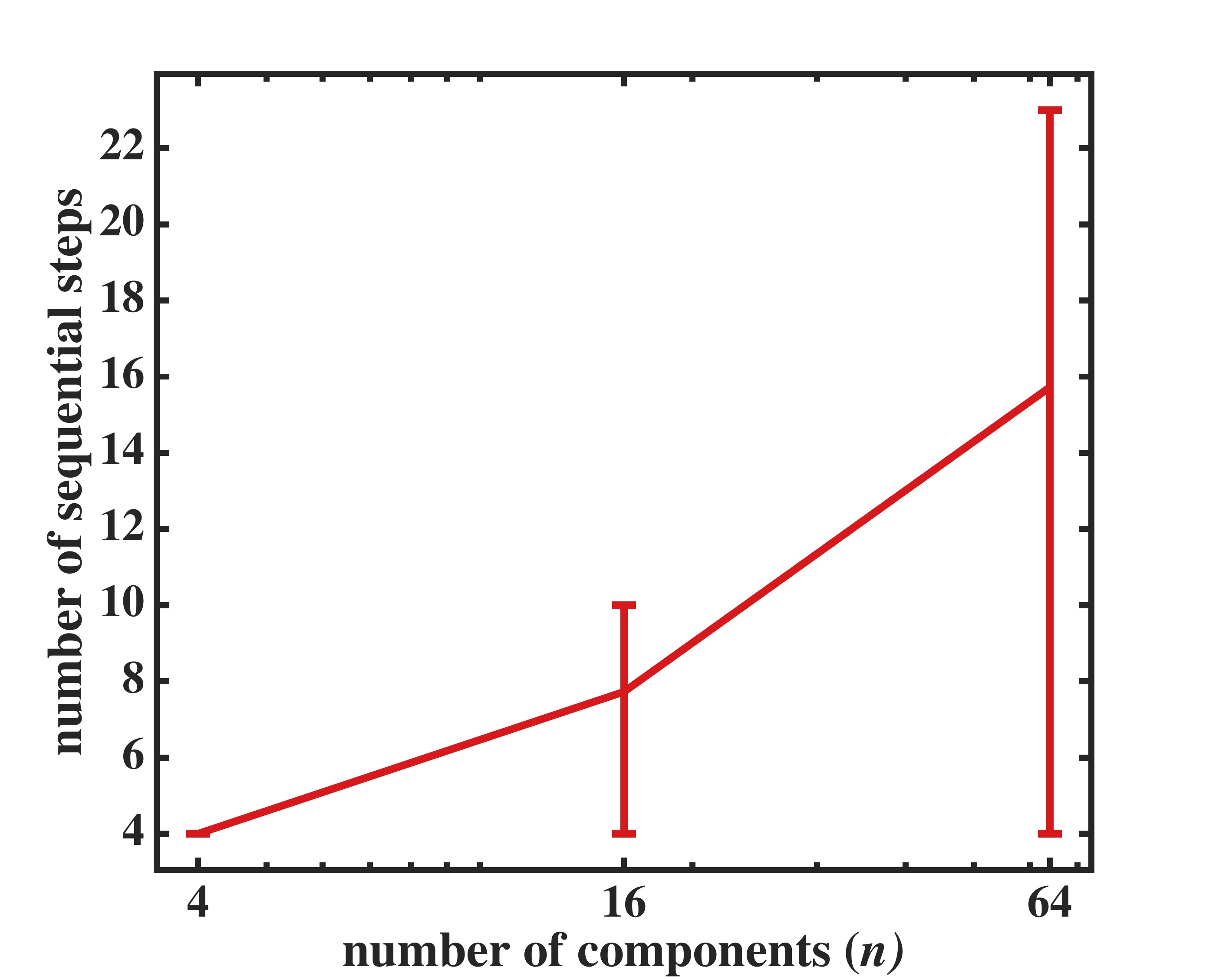}
	\caption{Number of sequential steps per iteration v.\ number of components}
\label{fig:2dheat_strong_scalability_perms_nseq_AA}
\end{subfigure}
\begin{subfigure}{0.4\textwidth}
	\centering
	\includegraphics[width=0.95\textwidth]{./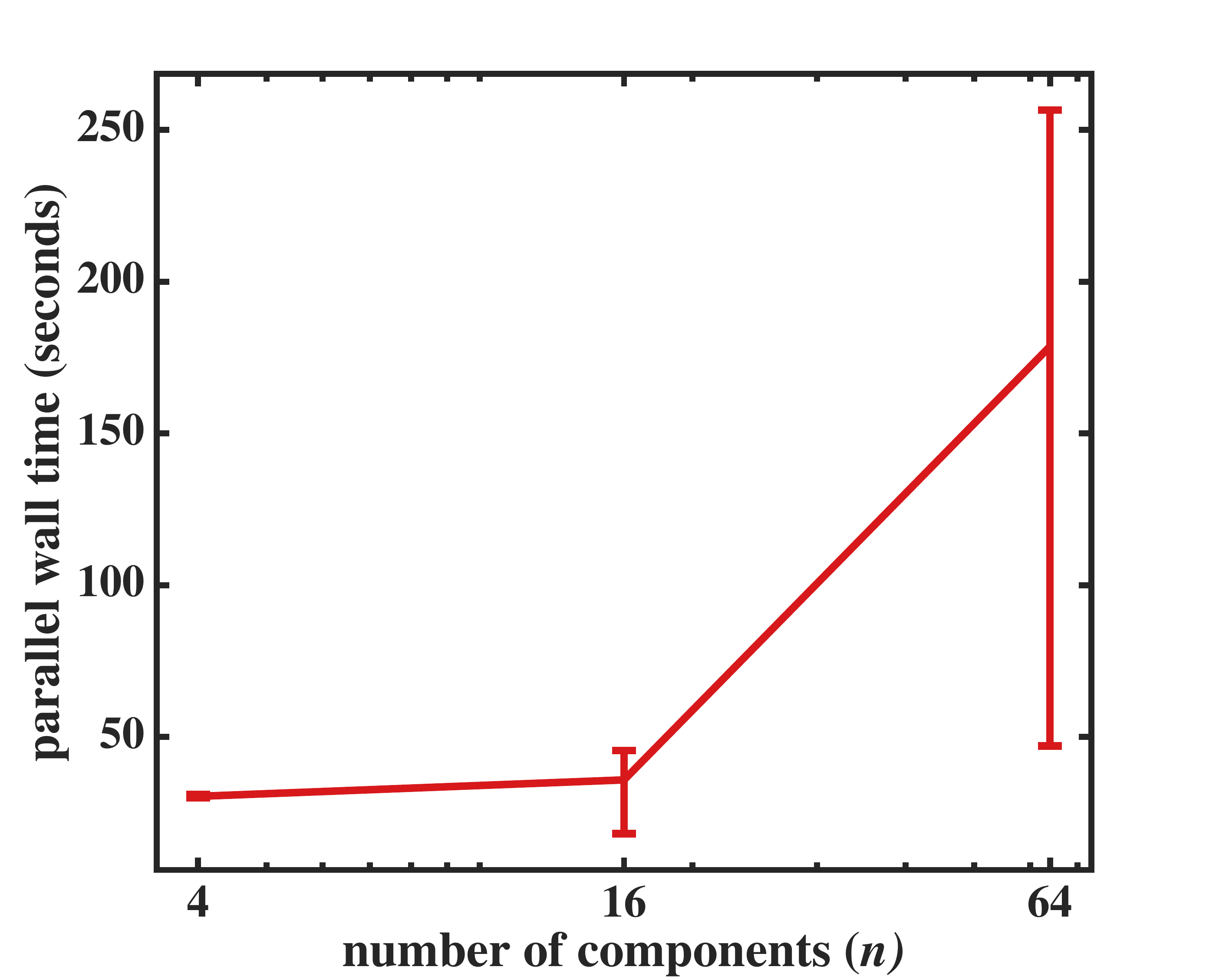}
	\caption{Parallel wall time v.\ number of components}
\label{fig:2dheat_strong_scalability_perms_time_AA}
\end{subfigure}
\begin{subfigure}{0.4\textwidth}
	\centering
	\includegraphics[width=0.95\textwidth]{./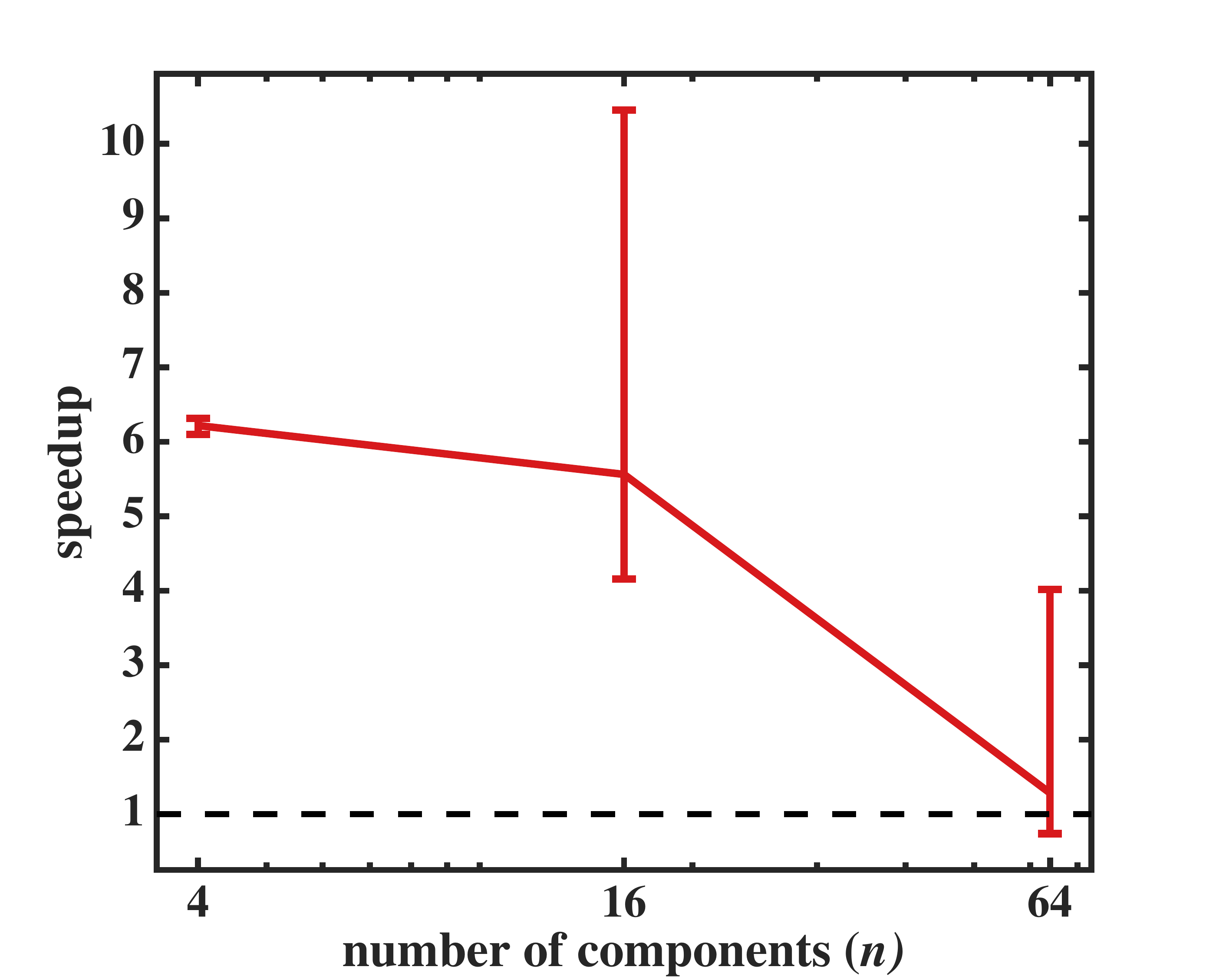}
	\caption{Speedup v.\ number of components}
\label{fig:2dheat_strong_scalability_perms_speedup_AA}
\end{subfigure}
\caption{\captionFirst Strong-scaling study using
	Anderson acceleration with $\memory = 5$.
	Gauss--Seidel with $\relaxation=1$ performance for ten random permutations
	$\permutationTuple$.
	Red curve represents the mean value, and vertical lines span the
	minimum and maximum values computed over these permutations.
Note that the permutation has minimal effect on the number of
	iterations required for convergence, and thus performance is driven
	by the number of sequential steps per iteration $\nsequential$
	that the permutation admits.
	}
\label{fig:2dheat_strong_scalability_perms_AA}
\end{figure}


\subsection{Weak-scaling study}\label{sec:weak}

We now investigate the weak-scaling performance of NetUQ.  In the context of
NetUQ, weak scaling associates with the case where a fixed library of
components on which uncertainty propagation can be performed is assembled into
larger and larger full systems.
In particular, we
adopt the same strategy for constructing networks as in Section
\ref{sec:strong}, with one exception: each component deterministic BVP is
discretized using a mesh composed of 36  grid points, regardless of the total
number of components. As such, the size of each component deterministic 
problem remains fixed for all networks. However, as the number of components
increases, the size of the overlap region between components decreases, and size of the
deterministic global problem increases.  We note that the weak-scaling case
with $\nsubsystems =64$ is equivalent to the strong-scaling case with
$\nsubsystems=64$, and thus the results for these cases are identical.

Fig.~\ref{fig:2dheat_weak_conv} reports convergence results, where---as in the
strong-scaling case---we use a permutation $\permutationTuple$ for
Gauss--Seidel that yields the minimum number of sequential steps per iteration
of $\nsequential=4$.  Comparing with Fig.~\ref{fig:2dheat_strong_conv}, we see
that---as in the strong-scaling case---convergence is faster for smaller
networks, Gauss--Seidel, $\relaxation=1$, and Anderson
acceleration.	The explanation behind these trends is the same as in the
strong-scaling case. However, we observe that convergence is substantially
faster (as a function of iteration number) for $\nsubsystems=4$ and
$\nsubsystems=16$ in the weak-scaling case compared with the
strong-scaling case. This is due to the fact that---for these network
sizes---the weak-scaling problem corresponds to a larger amount of overlap between
neighboring components than the strong-scaling problem. Increasing the amount
of overlap is known to promote convergence in domain decomposition, which is
an effect we observe here.

\begin{figure}[h!]
	\centering
	\begin{subfigure}{0.4\textwidth}
		\centering
		\includegraphics[width=0.95\textwidth]{./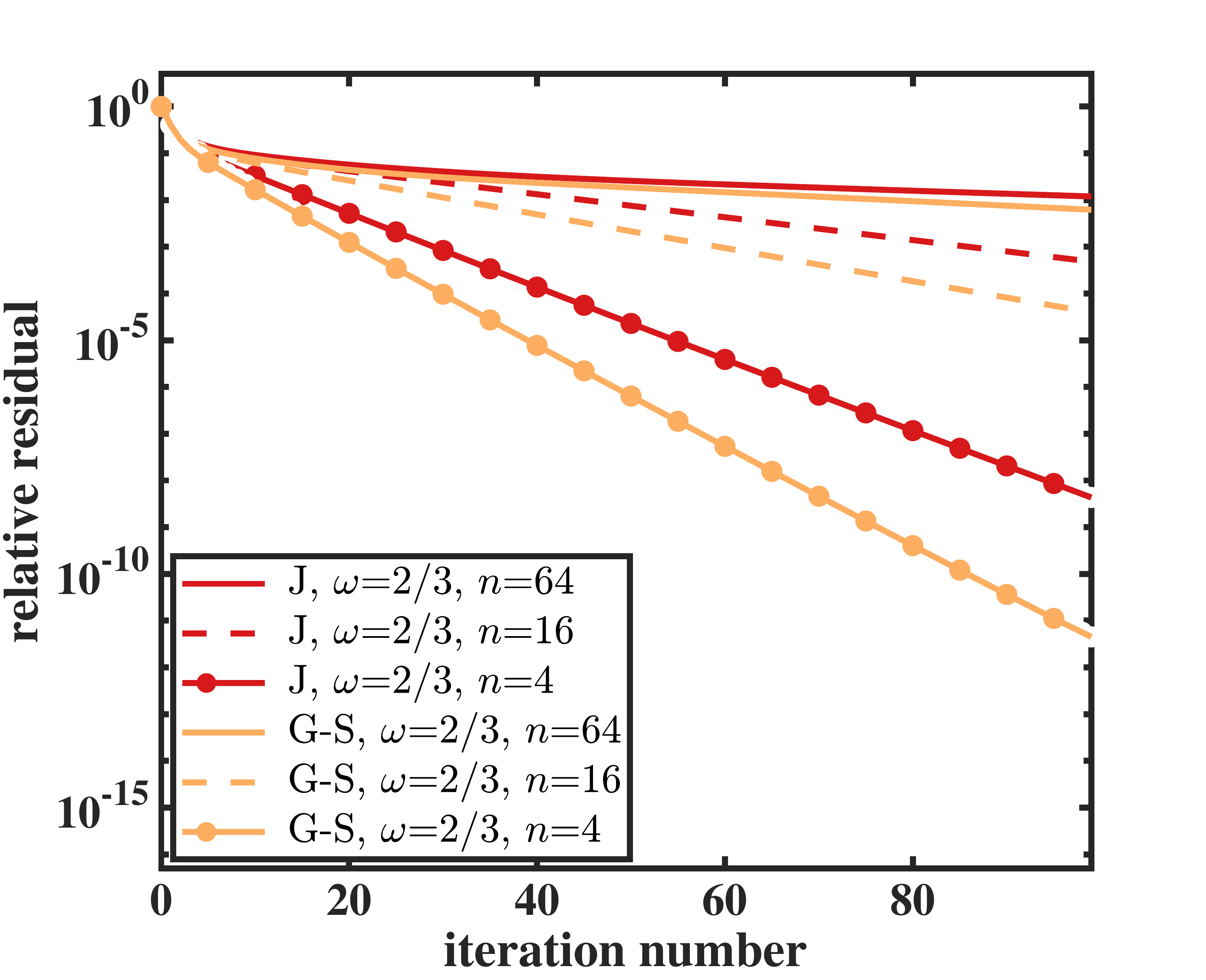}
		\caption{$\relaxation = 2/3$; no Anderson acceleration}
	\end{subfigure}
	\begin{subfigure}{0.4\textwidth}
		\centering
		\includegraphics[width=0.95\textwidth]{./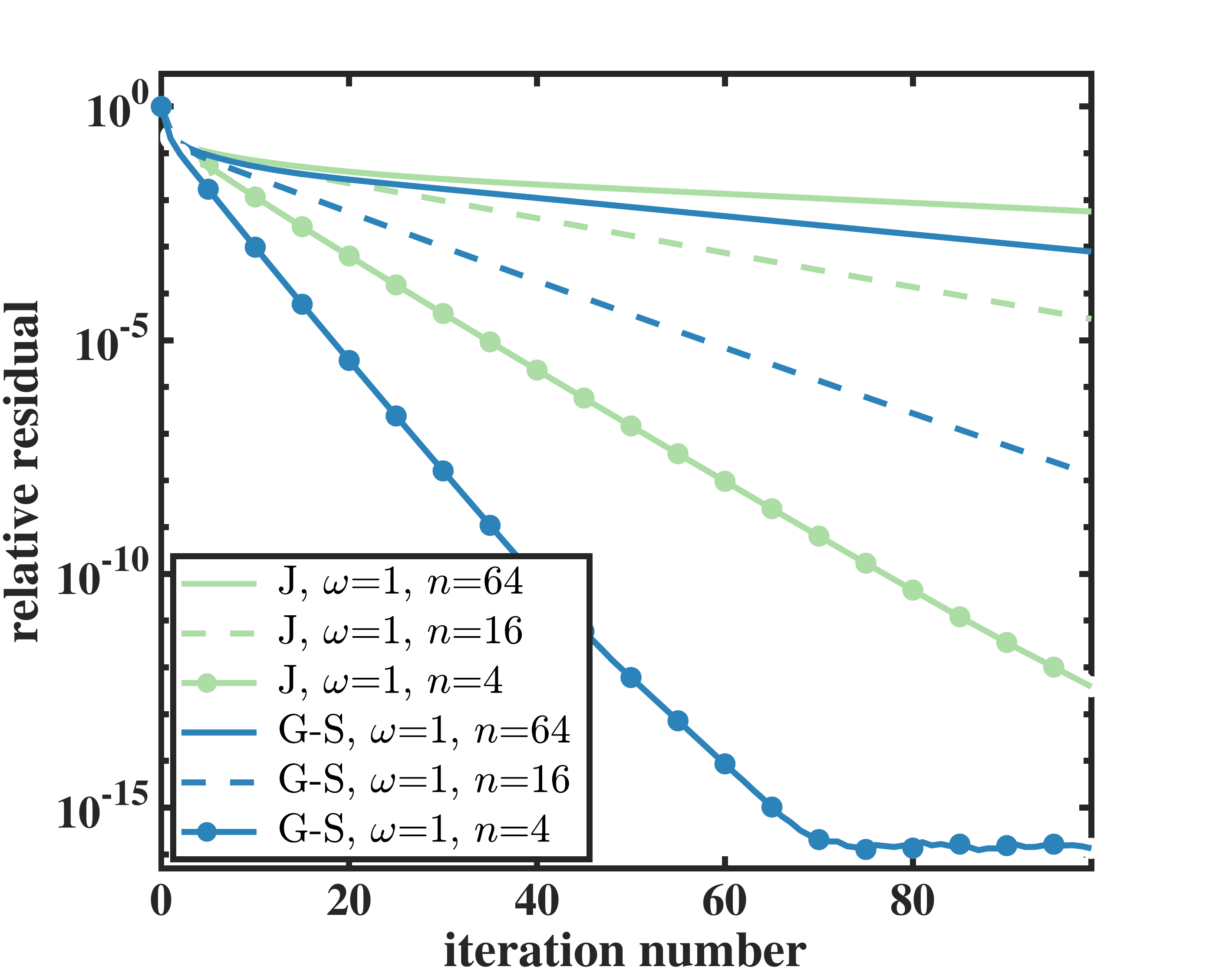}
		\caption{$\relaxation = 1$; no Anderson acceleration}
	\end{subfigure}
	\begin{subfigure}{0.4\textwidth}
		\centering
		\includegraphics[width=0.95\textwidth]{./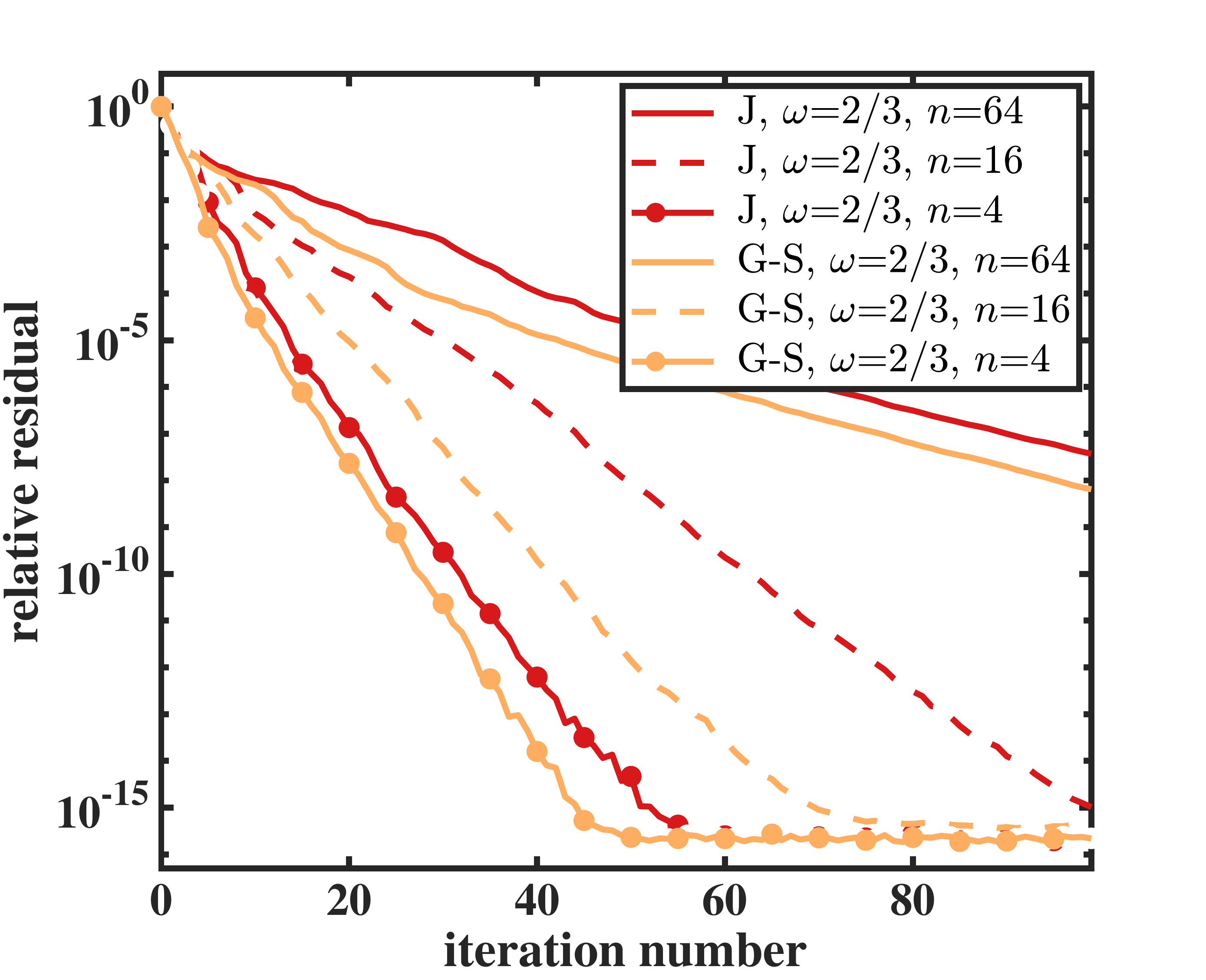}
		\caption{$\relaxation = 2/3$; Anderson acceleration, $\memory=5$}
	\end{subfigure} \begin{subfigure}{0.4\textwidth}
		\centering
		\includegraphics[width=0.95\textwidth]{./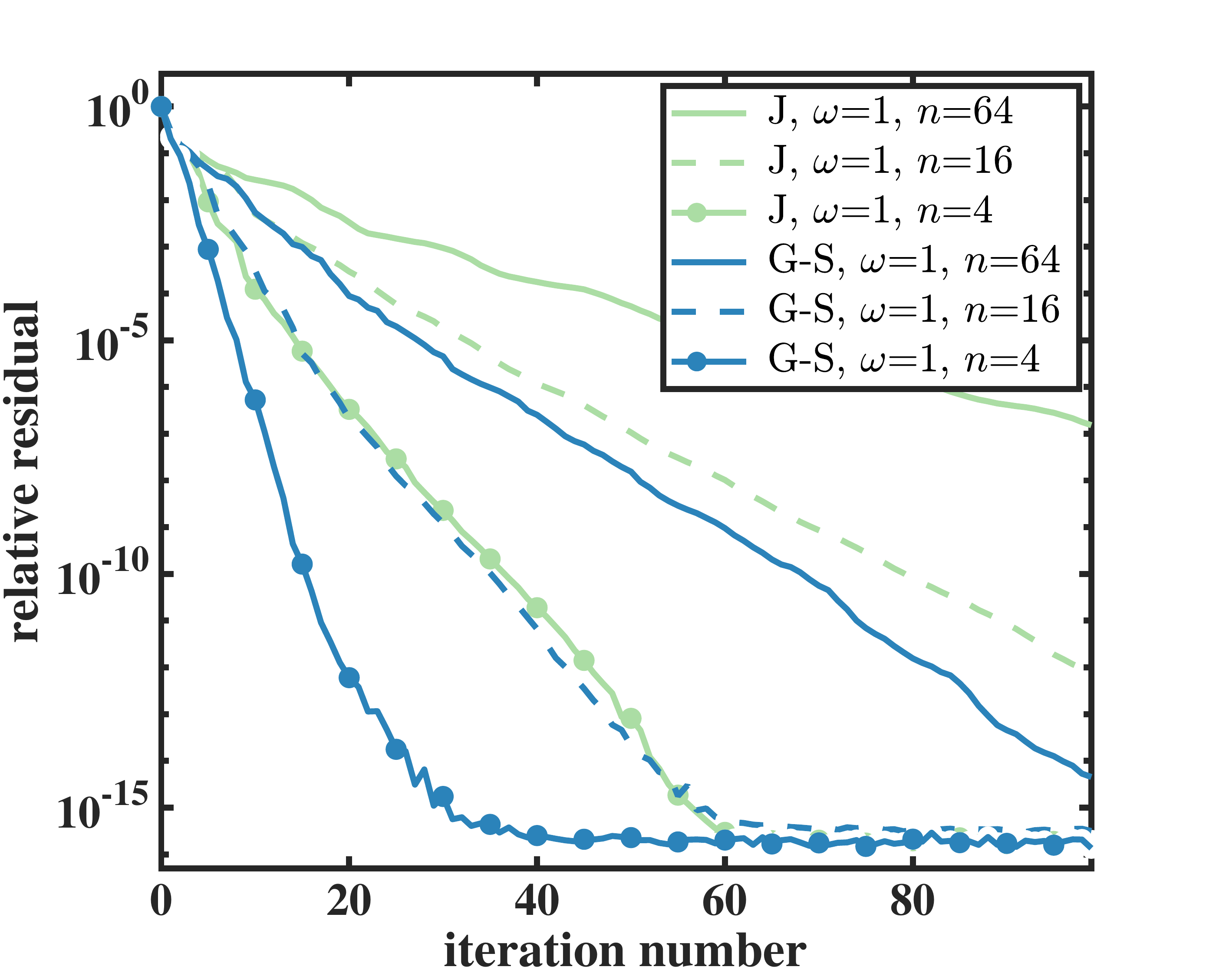}
		\caption{$\relaxation = 1$; Anderson acceleration, $\memory=5$}
	\end{subfigure}
	\caption{
	\captionFirst Weak-scaling study: convergence results. J and G-S
	labels refer to the Jacobi and Gauss--Seidel methods, respectively. Note
	that convergence is faster when NetUQ employs smaller networks,
	Gauss--Seidel (rather than Jacobi), $\relaxation=1$ (rather than
	$\relaxation=2/3$), and Anderson acceleration.
	}
	\label{fig:2dheat_weak_conv}
\end{figure}

Next, Fig.~\ref{fig:2dheat_weak_scalability} reports performance as a function
of the number of components. As in Fig.~\ref{fig:2dheat_strong_scalability},
results correspond to NetUQ without Anderson acceleration, and for the case
where iterations are terminated when the relative residual reaches a value of
$10^{-3}$.  Comparing Figs.~\ref{strongNoAAIt} and \ref{strongNoAAIt_weak}
shows that---in both cases---the number of iterations increases as the number
of components increases. This is due to the fact that more iterations are
needed for information to propagate throughout the domain when it is
decomposed into more components. However, we can see that the weak-scaling
case yields substantially fewer iterations than the strong-scaling case for
$\nsubsystems = 4$ and $\nsubsystems=16$; this occurs due to the increased
amount of overlap in the weak-scaling case as previously discussed.

Fig.\ \ref{fig:2dheat_weak_scalability_b} reports wall times, again using a
one-to-one component-to-processor map. Comparing with
Fig.~\ref{fig:2dheat_strong_scalability_b} shows that weak and strong scaling
exhibit opposite trends. This is sensible because---unlike in strong scaling
where the component deterministic problem decreases in size for larger
networks---the deterministic component-problem size remains constant for all
network sizes. Thus, the parallel wall time is driven purely by iteration
count in the case of weak scaling. However, as in the strong-scaling case, we
again see that the Jacobi method yields superior parallel wall-time
performance than the Gauss--Seidel method despite consuming more iterations;
this is due to the embarrassingly parallel nature of Jacobi iterations.

Fig.~\ref{fig:2dheat_weak_scalability_c} reports the speedup as a function of
the number of components, where the speedup is defined as the ratio of serial
execution time to parallel execution time, where the serial execution time
corresponds to the cost of solving the global uncertainty-propagation
problem using the same (global) mesh as that corresponding to the NetUQ
problem.  As with the strong-scaling case reported in
Fig.~\ref{fig:2dheat_strong_scalability_c}, we again see that modest speedups
are achieved for Jacobi with the largest network size. While this might seem
surprising given the trends shown in Fig.\
\ref{fig:2dheat_weak_scalability_b}, it can be explained by the fact that the
global uncertainty-propagation problem incurs lower parallel wall time as the
number of components decreases in this case. This also explains
why the speedup increases at a slower rate with increasing network size as
compared with the strong-scaling case.

Finally, Fig.~\ref{fig:2dheat_weak_scalability_d} elucidates the same trends
as were observed in Fig.~\ref{fig:2dheat_strong_scalability_d} for the
strong-scaling case: the errors incurred by the NetUQ formulation are
extremely small, and increase slightly as the number of components increases.

\begin{figure}[h!]
	\centering
	\begin{subfigure}{0.4\textwidth}
		\centering
		\includegraphics[width=0.95\textwidth]{./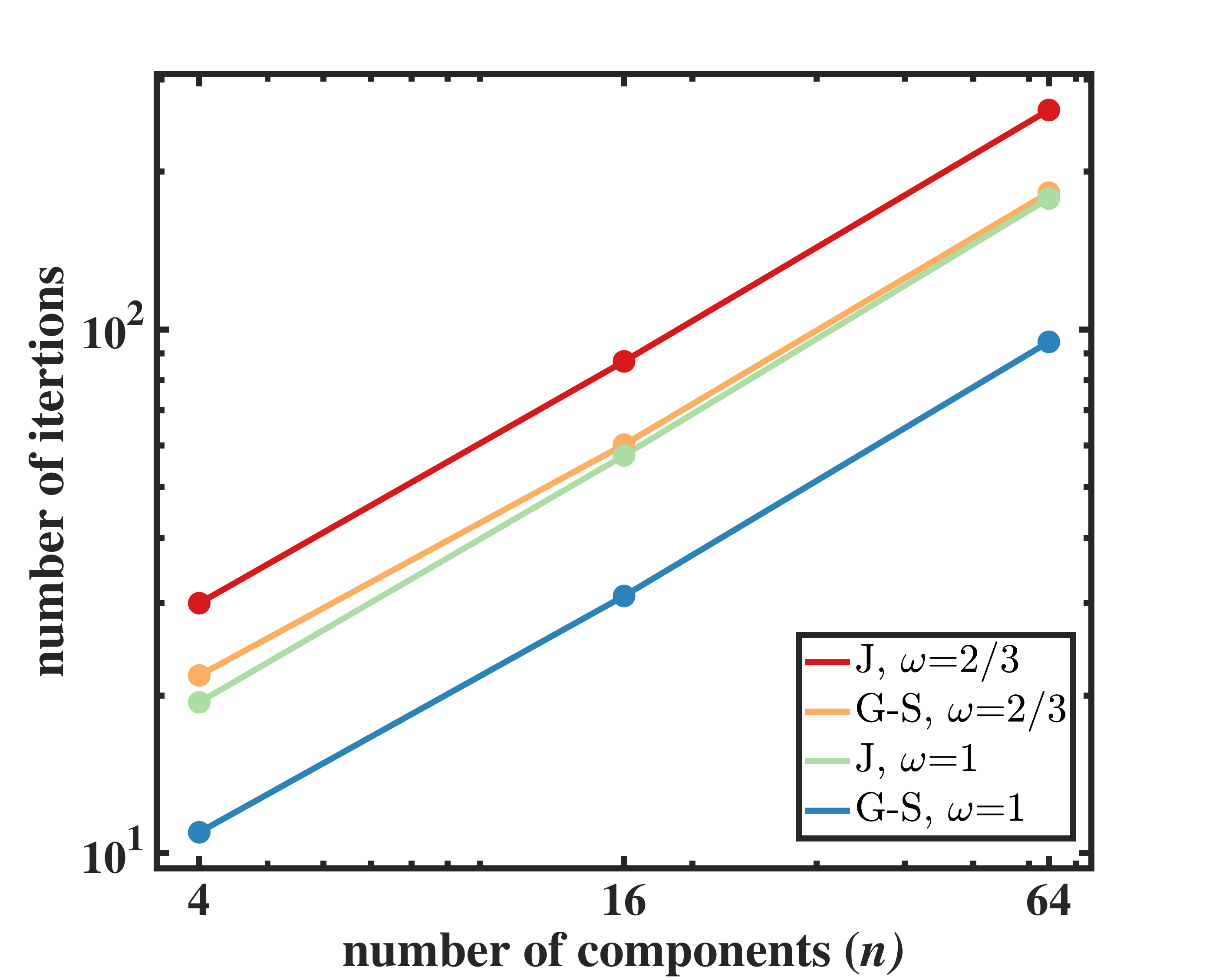}
		\caption{Number of iterations v.\ number of components}
	\label{strongNoAAIt_weak}
	\end{subfigure}
	\begin{subfigure}{0.4\textwidth}
		\centering
		\includegraphics[width=0.95\textwidth]{./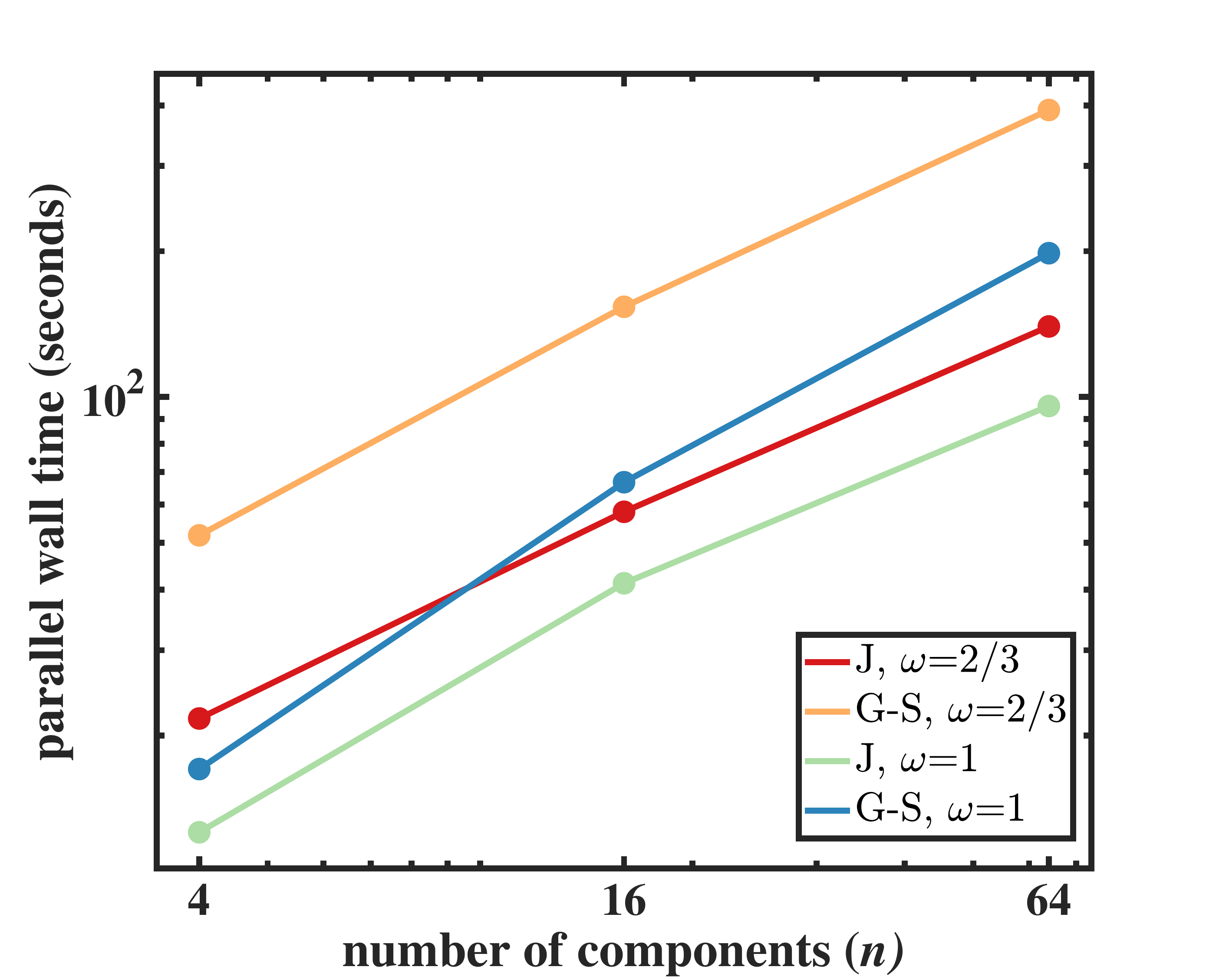}
		\caption{Parallel wall time v.\ number of components}
\label{fig:2dheat_weak_scalability_b}
	\end{subfigure}
	\begin{subfigure}{0.4\textwidth}
		\centering
		\includegraphics[width=0.95\textwidth]{./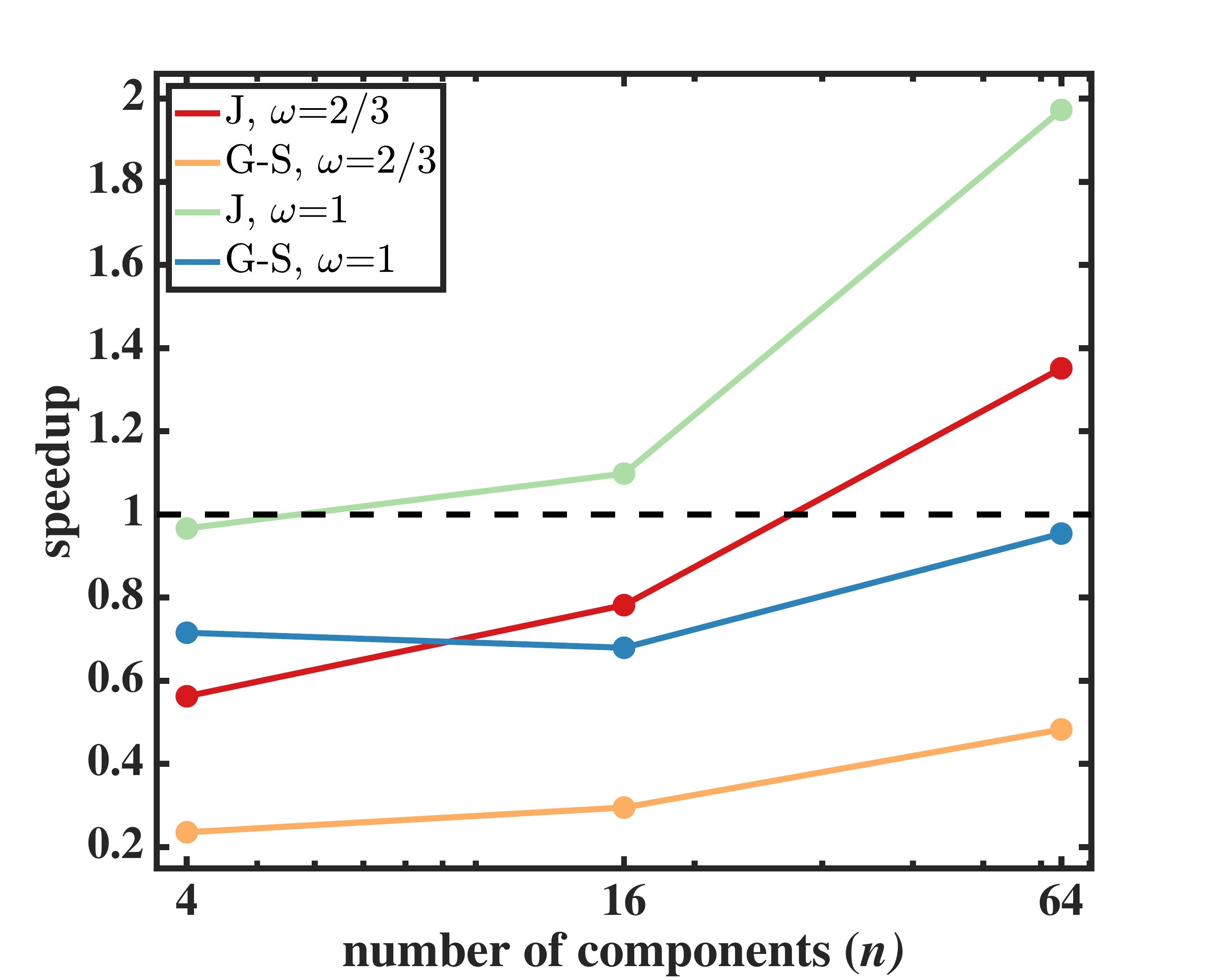}
		\caption{Speedup v.\ number of components}
\label{fig:2dheat_weak_scalability_c}
	\end{subfigure}
	\begin{subfigure}{0.4\textwidth}
		\centering
		\includegraphics[width=0.95\textwidth]{./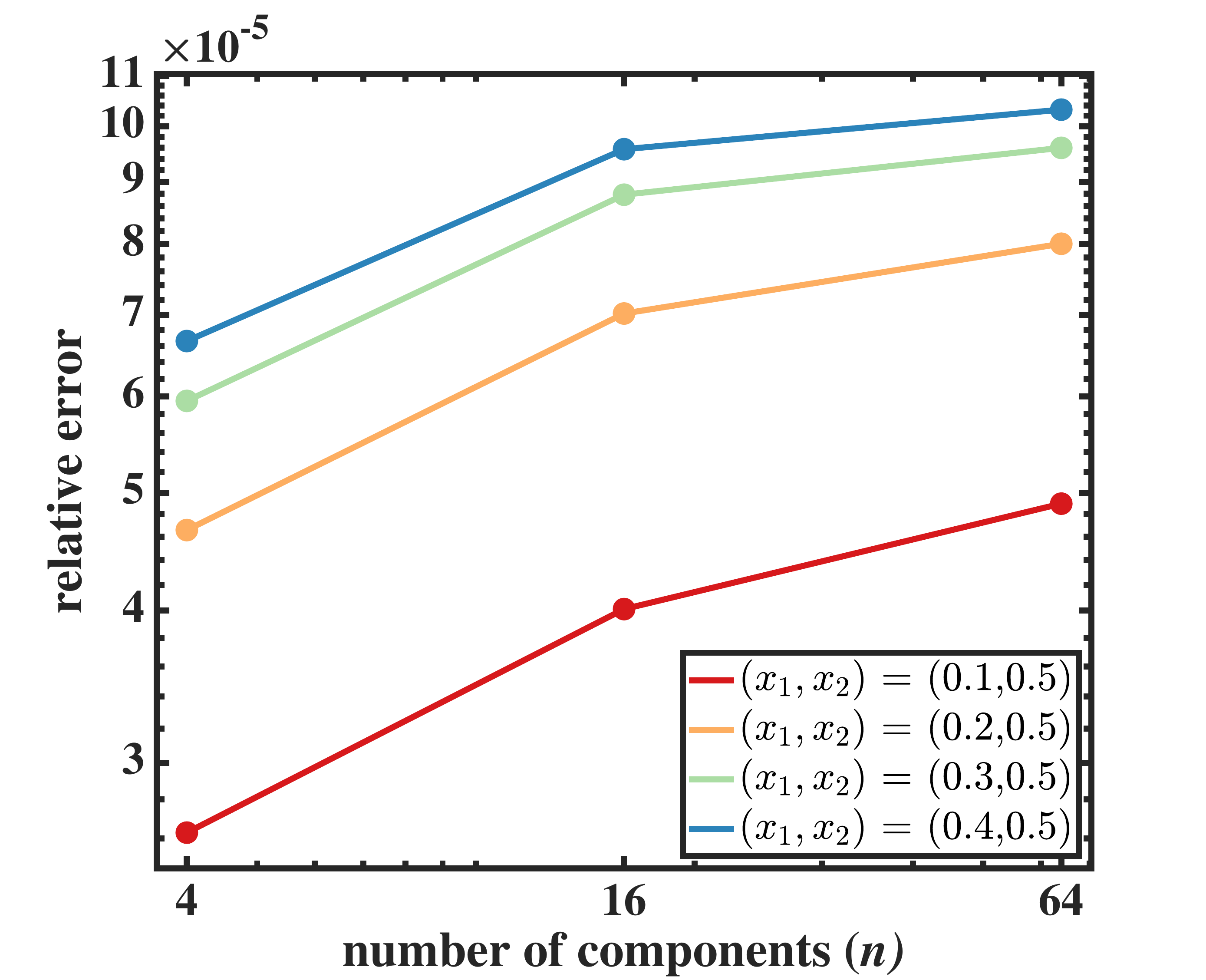}
		\caption{Relative error v.\ number of components}
\label{fig:2dheat_weak_scalability_d}
	\end{subfigure}
	\caption{
\captionFirst	Weak-scaling study without
	Anderson acceleration. Iterations are terminated when the relative
	residual reaches a value of $10^{-3}$.
J and G-S
	labels refer to the Jacobi and Gauss--Seidel methods, respectively.
	Relative error refers to the
	error in the PCE
coefficients of the field random variable at several spatial locations
computed using the NetUQ approach (executed with 3rd-order total-degree
truncation and satisfying a relative residual of $10^{-10}$) with respect to
the same PCE coefficients computed by solving the global
uncertainty-propagation problem.
}
	\label{fig:2dheat_weak_scalability}
\end{figure}

Fig.~\ref{fig:2dheat_weak_scalability_AA} repeats the same study, but with
all relaxation methods using Anderson acceleration with $\memory=5$.
By comparing with the corresponding results without Anderson acceleration
shown in Fig.~\ref{fig:2dheat_weak_scalability}, it is again clear that
employing Anderson acceleration substantially improves the performance of
NetUQ. As in the strong-scaling case, we again observe speedup saturation at
$\nsubsystems=16$. As before, we also observe that Anderson acceleration
reduces the performance discrepancy arising from different values of the
relaxation factor $\relaxation$.

\begin{figure}[h!]
	\centering
	\begin{subfigure}{0.4\textwidth}
		\centering
		\includegraphics[width=0.95\textwidth]{./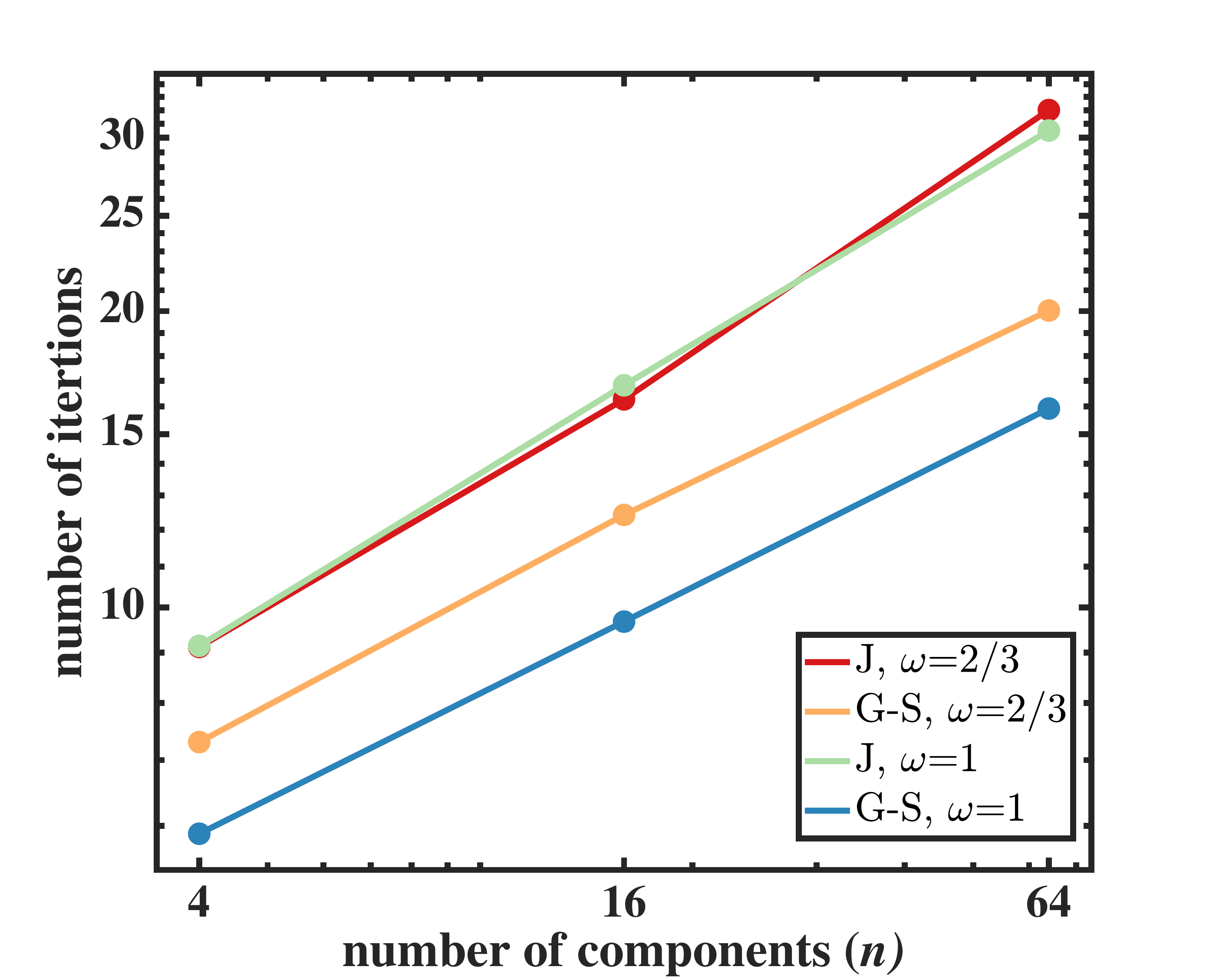}
		\caption{Number of iterations v.\ number of components}
	\end{subfigure}
	\begin{subfigure}{0.4\textwidth}
		\centering
		\includegraphics[width=0.95\textwidth]{./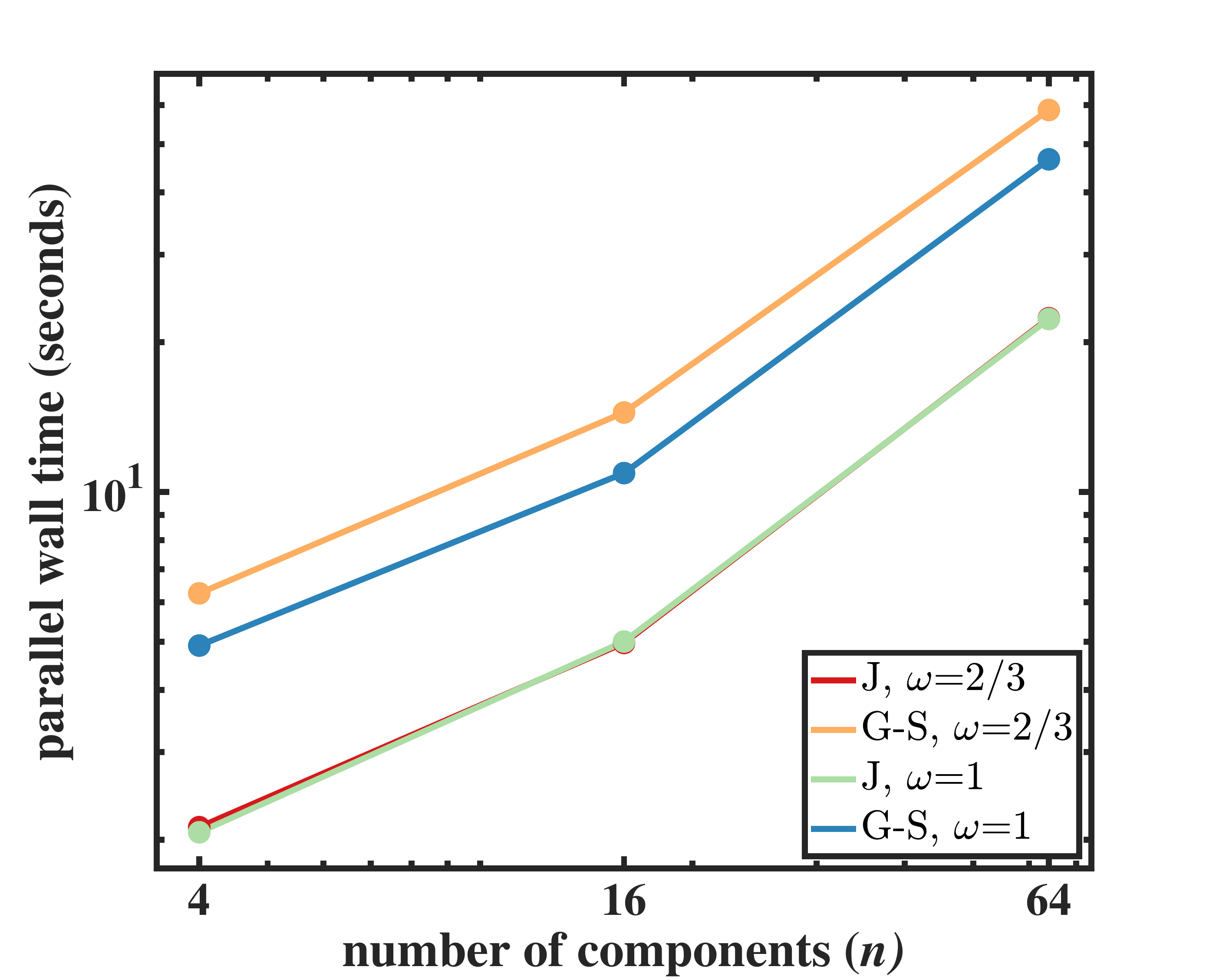}
		\caption{Parallel wall time v.\ number of components}
	\end{subfigure}
	\begin{subfigure}{0.4\textwidth}
		\centering
		\includegraphics[width=0.95\textwidth]{./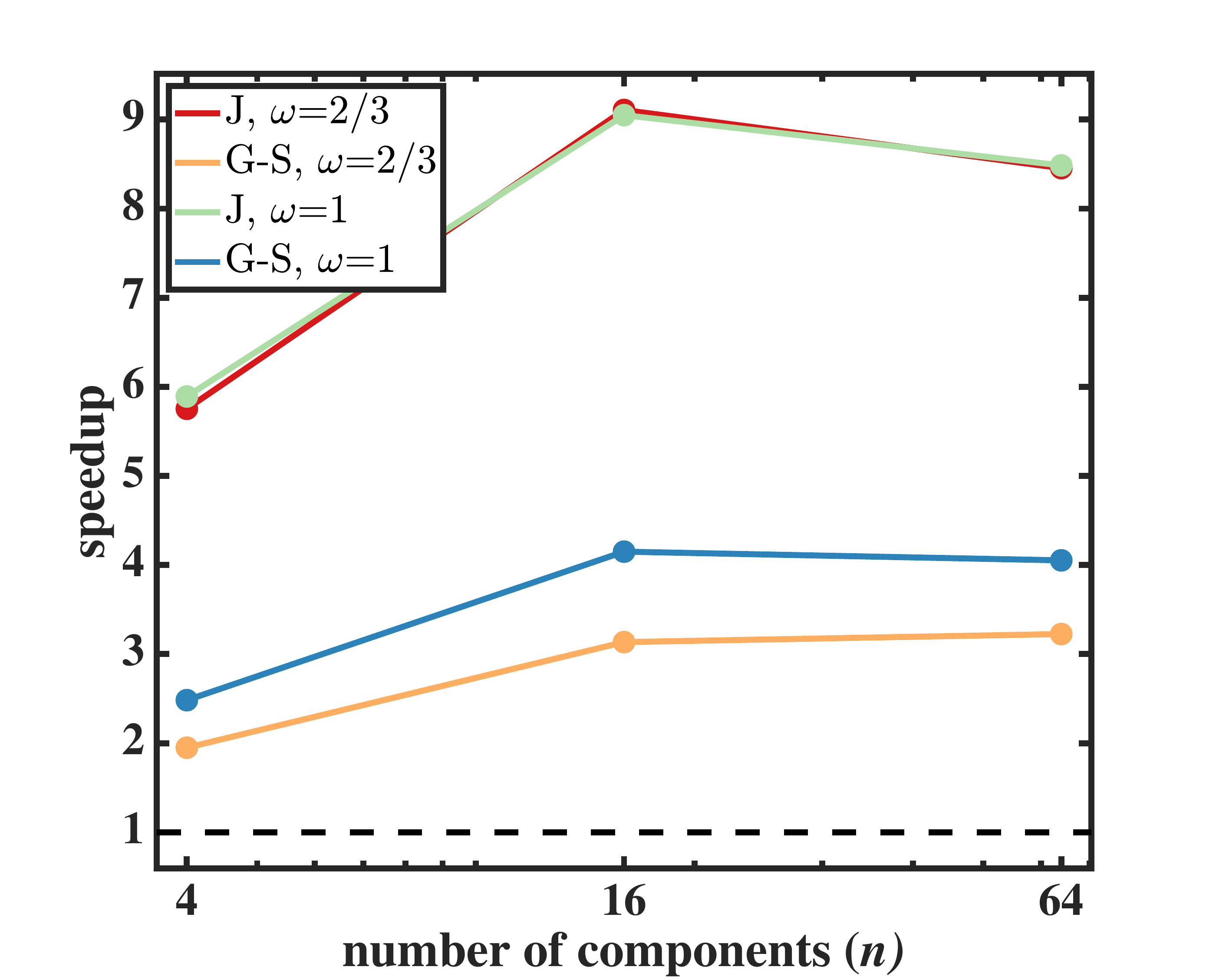}
		\caption{Speedup v.\ number of components}
	\end{subfigure}
	\caption{
	\captionFirst	Weak-scaling study
	using
	Anderson acceleration with $\memory = 5$.
Iterations are terminated when the relative
	residual reaches a value of $10^{-3}$.
J and G-S
	labels refer to the Jacobi and Gauss--Seidel methods, respectively.
}
	\label{fig:2dheat_weak_scalability_AA}
\end{figure}

Finally, Fig.~\ref{fig:2dheat_weak_scalability_perms_AA}
assesses the effect of the permutation $\permutationTuple$ on
the performance of the Gauss--Seidel method in weak scaling. Results are
similar to the strong-scaling case: the choice of permutation tuple has
essentially no effect on the number of iterations required for convergence, so
performance is driven by the number of sequential steps per iteration
$\nsequential$ exposed by the choice of permutation.

%

\begin{figure}[h!]
	\centering
	\begin{subfigure}{0.4\textwidth}
		\centering
		\includegraphics[width=0.95\textwidth]{./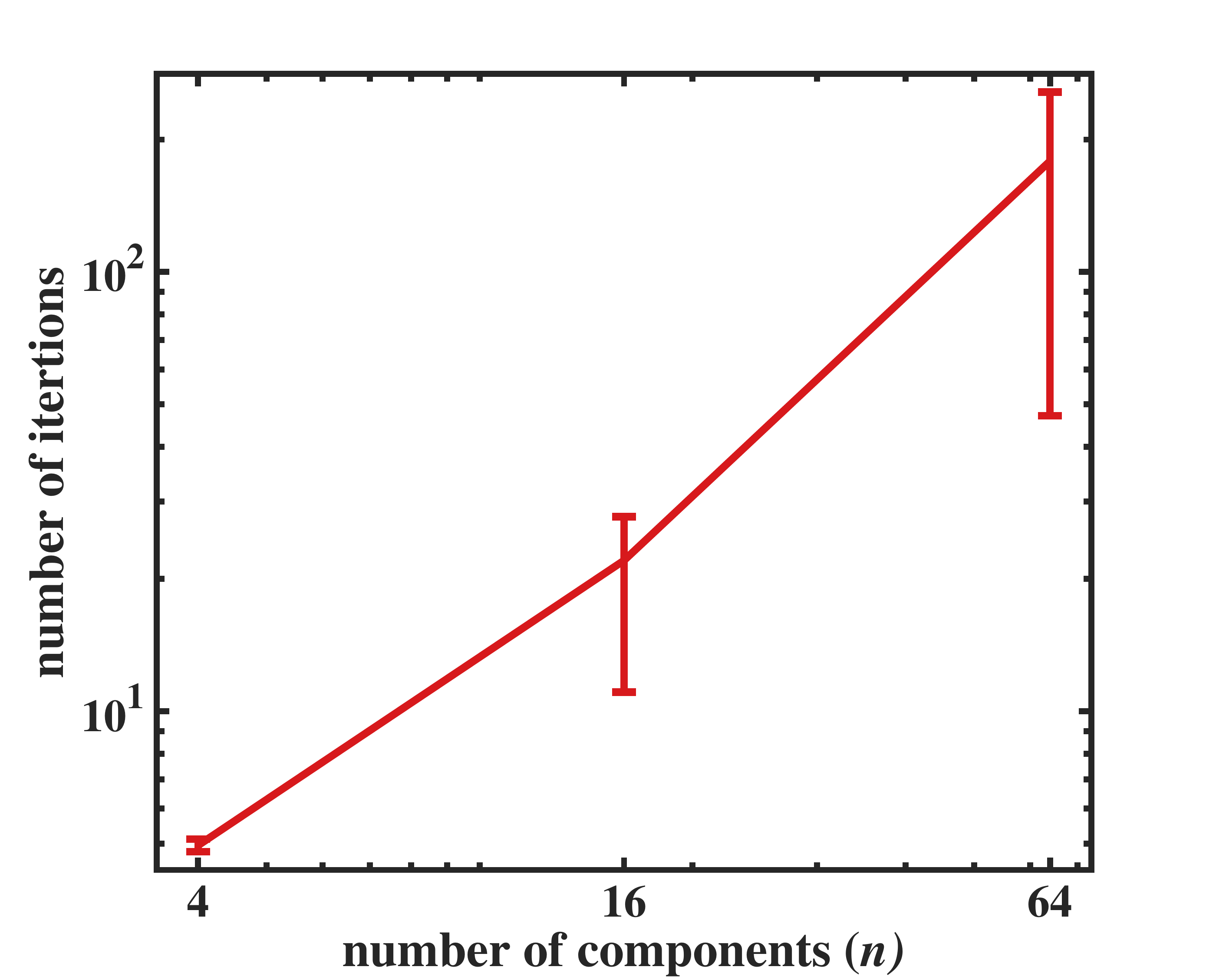}
		\caption{Number of iterations v.\ number of components}
	\end{subfigure}
	\begin{subfigure}{0.4\textwidth}
		\centering
		\includegraphics[width=0.95\textwidth]{./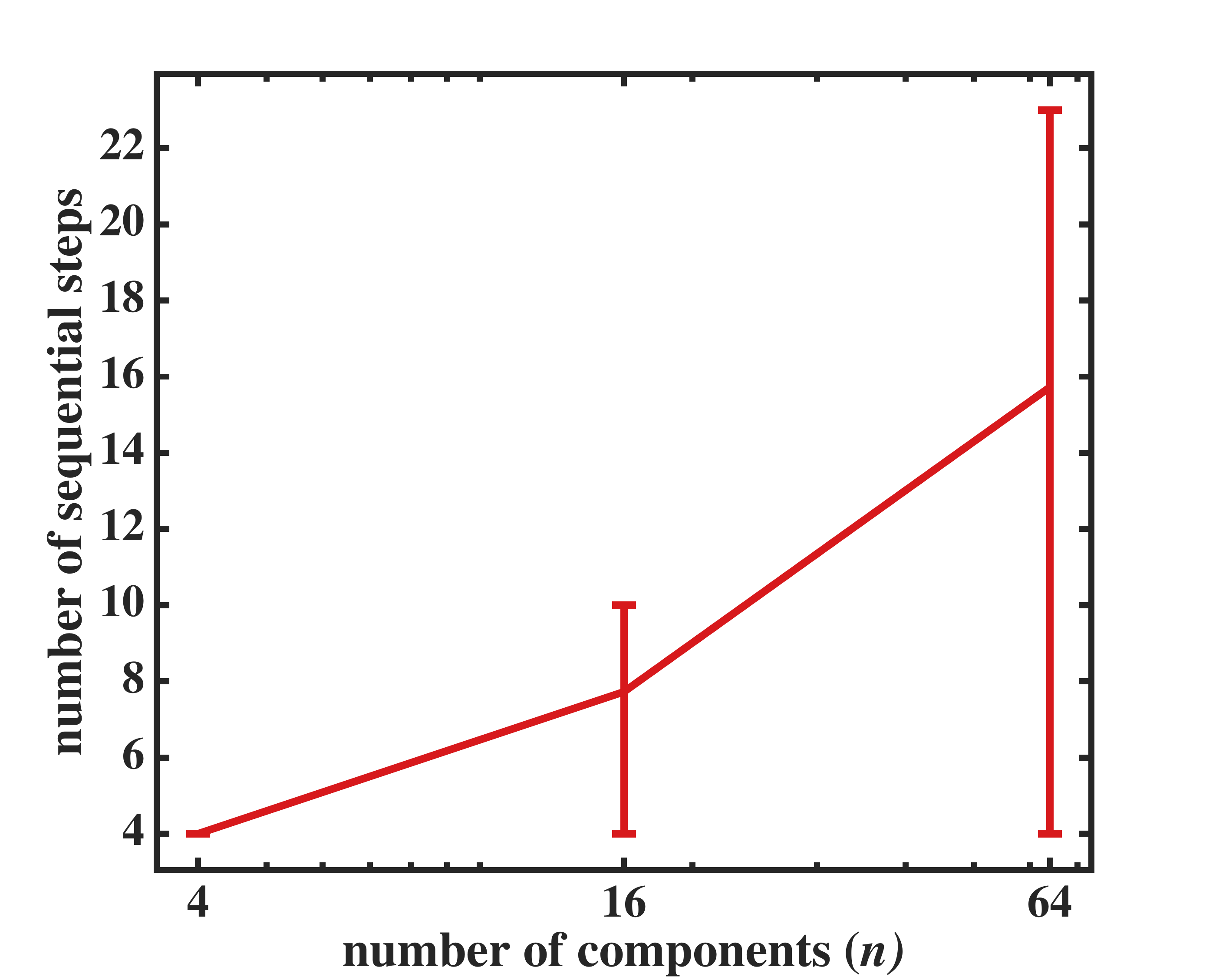}
		\caption{Number of sequential steps per iteration v.\ number of components}
	\end{subfigure}
	\begin{subfigure}{0.4\textwidth}
		\centering
		\includegraphics[width=0.95\textwidth]{./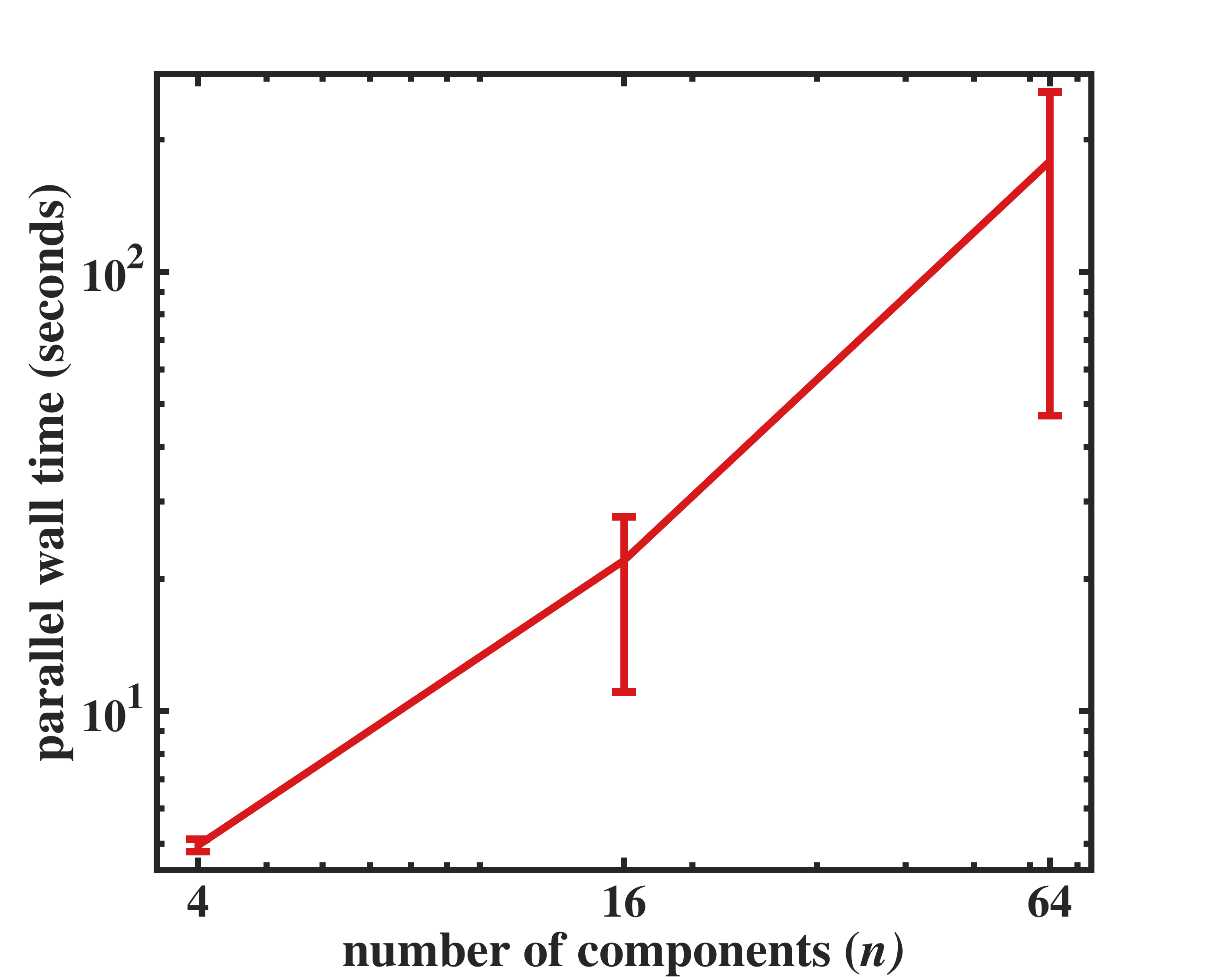}
		\caption{Parallel wall time v.\ number of components}
	\end{subfigure}
	\begin{subfigure}{0.4\textwidth}
		\centering
		\includegraphics[width=0.95\textwidth]{./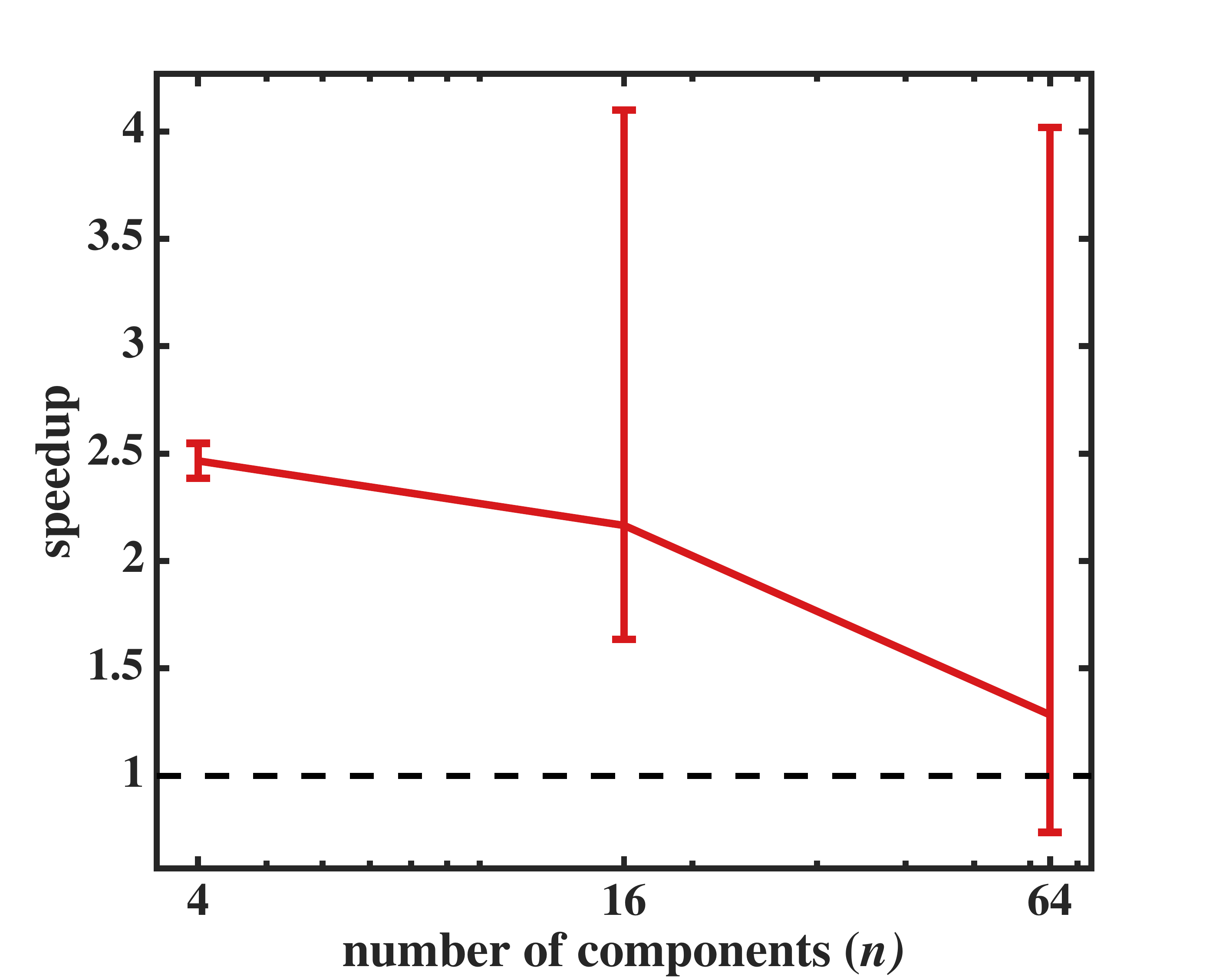}
		\caption{Speedup v.\ number of components}
	\end{subfigure}
	\caption{\captionFirst
	Weak-scaling study using
	Anderson acceleration with $\memory = 5$.
	Gauss--Seidel with $\relaxation=1$ performance for ten random permutations
	$\permutationTuple$.
	Red curve represents the mean value, and vertical lines span the
	minimum and maximum values computed over these permutations.
 }
	\label{fig:2dheat_weak_scalability_perms_AA}
\end{figure}

\section{Conclusions}\label{sec:conclusions}

This work proposed the network uncertainty quantification (NetUQ) method for
propagating uncertainties in large-scale networks. The approach simply assumes
the existence of a collection of components, each of which is characterized by
exogenous-input random variables, endogenous-input random variables, output
random variables, and an uncertainty-propagation operator. The method
constructs the network simply by associating endogenous-input random variables
for each component with output random variables from other components; no
other inter-component compatibility conditions are required. This formulation
promotes component independence by enabling different components to use
different functional representations of random variables and different
uncertainty-propagation operators.

To resolve the resulting network uncertainty-propagation problem, the method
employs the classical relaxation methods of Jacobi and Gauss--Seidel iteration
equipped with Anderson acceleration. Each Jacobi iteration entails
embarrassingly parallel component uncertainty propagation, while each
Gauss--Seidel iteration incurs feed-forward uncertainty propagation
in a DAG created by ``splitting'' selected edges in the network. Critically,
no two-way coupled solves between components are required within a given
relaxation iteration.

In addition to proposing the NetUQ method, this work provided supporting error
analysis (Section \ref{sec:error}), which is applicable to the case where the
component uncertainty-propagation operators comprise an approximation of an
underlying ``truth'' uncertainty-propagation operator. These results include
\textit{a priori} (Proposition \ref{prop:apriori}) and \textit{a
posteriori} (Proposition \ref{prop:apost}) error bounds for the general case.
In addition, this section performed error analysis for the case where the
uncertainty-propagation operator restricts the solution to lie in a subspace
of the space considered by the truth uncertainty-propagation operator,
including conditions for zero in-plane error (Propositions \ref{cor:LHSzero}
and \ref{cor:LHSzeroSecond}), and \textit{a priori} (Proposition
\ref{prop:alternativeApriori}) and \textit{a posteriori} (Proposition
\ref{prop:alternativeApost}) in-plane error bounds.

Numerical experiments performed in Section \ref{sec:numericalExperiments}
studied the strong-scaling (Section \ref{sec:strong}) and weak-scaling
(Section \ref{sec:weak}) performance of the method on a benchmark
parameterized diffusion problem. These results illustrate that convergence
occurs in the fewest number of iterations for smaller networks, Gauss--Seidel
iteration, and a relaxation factor of $\relaxation=1$. However, due to the
embarrassingly  parallel nature of Jacobi iterations, the Jacobi method yields
superior parallel wall-time performance compared with the Gauss--Seidel
method. Critically, numerical experiments also demonstrated that Anderson
acceleration is essential for generating substantial speedups relative to
monolithic full-system uncertainty propagation.

Future work entails demonstrating the methodology on problems characterized by
greater discrepancies between components, investigating other mechanisms to
accelerate convergence of the fixed-point iterations, and integrating
surrogate models to reduce the wall-time incurred by component deterministic
simulations used for component uncertainty propagation.  Preliminary results
in this direction have been presented in Ref.~\cite{guzzettiNS}, which applied
NetUQ to propagate uncertainties in large-scale 3D hemodynamics models, and
employed reduced-order models to reduce the computational cost of solving the
component deterministic problems.

\section*{Acknowledgements}
The authors gratefully acknowledge Jiahua Jiang for helpful initial
experiments performed with Anderson acceleration.  This work was performed at
Sandia National Laboratories and was supported by the LDRD program (project
190968).  This paper describes objective technical results and analysis. Any
subjective views or opinions that might be expressed in the paper do not
necessarily represent the views of the U.S. Department of Energy or the United
States Government. Sandia National Laboratories is a multimission laboratory
managed and operated by National Technology \& Engineering Solutions of
Sandia, LLC, a wholly owned subsidiary of Honeywell International Inc., for
the U.S.\ Department of Energy's National Nuclear Security Administration
under contract DE-NA0003525.

\bibliographystyle{siam}
\bibliography{references}
\end{document}